\newcommand{\cbar}{\w c}
\newcommand{\zetabound}{\w\zeta}
\newcommand{\xibound}{\epsilon}
\newcommand{\thetabound}{\epsilon}
\newcommand{\deltabar}{{\wbar\delta}}
\newcommand{\thetabar}{{\wbar\theta}}
\newcommand{\etaratio}{\eta}
\newcommand{\jprime}{j}
\newcommand{\mj}{j}
\newcommand{\Qpower}{p}
\newcommand{\kr}{t}
\newcommand{\an}{n}
\newcommand{\tm}{h}
\newcommand{\gepm}{\succneq}
\newcommand{\vanishinga}{Proposition \ref{propositionvanishing}(ii)}
\newcommand{\vanishingc}{Proposition \ref{propositionvanishing}(iv)}
\newcommand{\opnorm}{\@ifstar\@opnorms\@opnorm}
\newcommand{\@opnorms}[1]{%
  \left|\mkern-1.5mu\left|\mkern-1.5mu\left|
   #1
  \right|\mkern-1.5mu\right|\mkern-1.5mu\right|
}
\newcommand{\@opnorm}[2][]{%
  \mathopen{#1|\mkern-1.5mu#1|\mkern-1.5mu#1|}
  #2
  \mathclose{#1|\mkern-1.5mu#1|\mkern-1.5mu#1|}
}
\begin{document}

\title[Hausdorff dimensions of perturbations of a CIFS]{Hausdorff dimensions of perturbations of a conformal iterated function system via thermodynamic formalism}

\authortushar\authorlior\authordavid\authormariusz

\subjclass[2020]{Primary: 37C45, 11K50, 37D35, 37C30 Secondary: 11K60, 28A78, 37F35, 28A80, 37B10}
\keywords{Thermodynamic formalism, transfer operator, Hausdorff dimension, iterated function system, IFS, conformal map, Gauss map, continued fractions, dynamical systems, fractal geometry, functional analysis, perturbation theory, spectral theory}
\dedicatory{Dedicated to Abram Samoilovitch Besicovitch (1891--1970) and Vojt\v{e}ch Jarn\'ik (1897--1970)}

\begin{abstract}
We consider small perturbations of a conformal iterated function system produced by either adding or removing some generators with small derivative from the original. We establish a formula, utilizing transfer operators arising from the thermodynamic formalism \`a la Sinai--Ruelle--Bowen, which may be solved to express the Hausdorff dimension of the perturbed limit set in series form: either exactly, or as an asymptotic expansion. Significant applications to the dimension theory of continued fraction Cantor sets include strengthening Hensley's asymptotic formula from 1992, which improved on earlier bounds due to Jarn\'ik and Kurzweil, for the Hausdorff dimension of the set of real numbers whose continued fraction expansion partial quotients are all $\leq N$; as well as its counterpart for reals whose partial quotients are all $\geq N$ due to Good from 1941.

\end{abstract}

\maketitle


\tableofcontents

\section{Introduction}

We are approaching the close of a century of mathematics, following Hausdorff's seminal work \cite{Hausdorff}, dedicated to a panoply of measure- and dimension-theoretic research regarding the intricate fractal geometry of sets arising from classical Diophantine approximation and its manifold avatars. Abram Samoilovitch Besicovitch and Vojt\v{e}ch Jarn\'ik were among the pioneers who first broke ground at this fertile interface of algebra (number theory) and analysis (geometric measure theory), and this paper is dedicated to the beautiful vistas exposed by their mathematics. Their influential investigations have led to a blossoming area broadly known as \emph{metric Diophantine approximation}, with several connections to classical number theoretic questions, as well as more surprising links to mathematical physics, dynamical systems, fractal geometry, analytic combinatorics, computer science, wireless communication, etc. -- see \cite{CusickFlahive, Shallit, BernikDodson, Dani2, BDD, Lagarias, DodsonKristensen, Kristensen2, KSS, FSU4, Moreira2, BeresnevichVelani6, Hensley_book, IosifescuKraaikamp,WangWu} and the references therein for a sampling of such relationships.

We begin with a brief description of two theorems in this vein, which follow from our more general results that are described in later sections. Recall (e.g., \cite{Khinchin_book,Schmidt3}) that an irrational number $x$ is called \emph{badly approximable} if there exists $\epsilon >0$ such that $|x - p/q| \geq \epsilon/q^2$ for any rational $p/q$. To study Diophantine properties it suffices to consider irrationals in the unit interval, and for any irrational $x \in [0,1]$ we abbreviate its simple (or regular) continued fraction expansion as follows
\[
x=\cfrac1{a_1+\cfrac1{a_2+\cfrac1{a_3+ \ddots \;}}}
= [a_1,a_2,a_3,\dots],
\]
where the sequence of positive integers $a_i = a_i(x)$ are known as the partial quotients (or continued fraction entries/digits) of $x$.
It is well-known (\cite[Theorem 5F]{Schmidt3} or \cite[Theorem 1.9]{Bugeaud}) that $x$ is badly approximable if and only if the partial quotients in its continued fraction expansion are bounded. Thus given a finite subset $I \subset \N$ the set $\Lambda_I$ of all numbers in $[0,1]$ whose continued fraction expansions have partial quotients that belong to $I$ form a subset of the badly approximable numbers. Such sets $\Lambda_I$ are Cantor sets that may be described as conformal iterated function system (CIFS) limit sets \cite{MauldinUrbanski1,CLU}, or as cookie-cutter (Cantor) sets, after Dennis Sullivan \cite{Bedford3}. The study of their Hausdorff dimension has attracted the attention of several researchers over many decades -- for a small sampling of such work across a broad spectrum of fields see \cite{Bumby1, Bumby2, Cusick2, Cusick3, Cusick, Hensley, Hensley3, Hensley4, Hensley5, FlajoletVallee2, FlajoletVallee3, CesarattoVallee2, McMullen_conformal_3, JenkinsonPollicott2, JenkinsonPollicott3, JenkinsonPollicott4, FalkNussbaum, Kontorovich, CarminatiTiozzo3} and the references therein. In contrast, estimates and rigorous dimension computation for Cantor sets that arise from infinite subsets $I \subset \N$ (and the measure-theoretic study of limit sets of infinite CIFS, more generally) present a variety of new challenges and there is plenty left to uncover -- see \cite{Ramharter2, GardnerMauldin, FSU1, HeinemannUrbanski, MauldinUrbanski1, MauldinUrbanski4, FalkNussbaum2, CLU2, BertheLee, Gonzalez} for some progress in this vein.

Perhaps the earliest paper on the Hausdorff dimension of continued fraction Cantor sets was Jarn\'ik's paradigmatic \cite{Jarnik1}, in which he established that for every $N \geq 8$
\[
1 - \frac{4}{N \log(2)} \leq \HD(F_{\leq N}) \leq 1 - \frac{1}{8N \log(N)},
\]
where $\HD$ denotes Hausdorff dimension and $F_{\leq N}$ denotes the set of all numbers in $[0,1]$ whose continued fraction expansions have partial quotients all $\leq N$. As a corollary Jarn\'ik was able to prove his seminal result on full Hausdorff dimension of the set of badly approximable numbers\footnote{Jarn\'ik's result inspired a myriad extensions, e.g. \cite{Patterson1, Schmidt2, Dani4, FSU4, Beresnevich_BA, Simmons5}, and finding analogues of our results in any of these settings would involve tackling several new challenges.}, which may be described as an increasing union of the $F_{\leq N}$ sets.


Over two decades later in 1951, Jarn\'ik's student Jaroslav Kurzweil was able to improve the former bounds in his doctoral work by proving \cite[Theorem VIII]{Kurzweil} that 
\begin{equation}
\label{kurzweil}
1 - \frac{0.99}{N} \leq \HD(F_{\leq N}) \leq 1 - \frac{0.25}{N}
\end{equation}
for $N \geq 1000$.  This was the state of the art for the next four decades until the breakthrough work of Doug Hensley who leveraged functional analytic techniques\footnote{Hensley's approach arose from a distinguished line of research on Gauss's problem on the distribution of continued fraction partial quotients by Kuzmin, Levy, Sz\"usz, Wirsing, Babenko, Mayer and several others, see Knuth's  \cite[pp.362--366]{Knuth} for a beautiful, albeit already dated, survey.} to improve on Kurzweil's result by proving \cite{Hensley} (cf. \cite[Chapter 5]{Bugeaud}) that 
\begin{equation}
\label{hensley}
\HD(F_{\leq N}) = 1 - \frac{6}{\pi^2} \frac 1N - \frac{72}{\pi^4} \frac{\log(N)}{N^2} + O\left(\frac{1}{N^2}\right)
\end{equation}
Notice that \eqref{hensley} is stronger than \eqref{kurzweil} for sufficiently large $N$, since $.25 < 6/\pi^2 < .99$.

Hensley's haunting formula \eqref{hensley} leads to some natural questions: what does the remainder term $O(1/N^2)$ look like? Can it be written as $c/N^2 + o(1/N^2)$ for some coefficient $c$? And if so, what does the $o(1/N^2)$ here look like: do more logarithms appear? The following theorem is an example of our main result, Theorem \ref{maintheorem}, applied to the sequence of sets $(F_{\leq N})$:

\begin{theorem}
\label{theoremFleqN}
For each $p\geq 1$, the Hausdorff dimension of $F_{\leq N}$ can be estimated via the formula
\begin{equation}
\label{HDFleqN}
\HD(F_{\leq N}) = 1 + \sum_{i = 1}^{p - 1} \sum_{j = 0}^{i - 1} c_{i,j} \frac{\log^j(N)}{N^i} + O_p\left(\frac{\log^{p - 1}(N)}{N^{p}}\right),
\end{equation}
where $c_{i,j} \in \R$ are effectively computable constants. Here $O_p$ means that the implied constant of $O$ may depend on $p$.
\end{theorem}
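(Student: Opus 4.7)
The proof is a direct application of Theorem \ref{maintheorem}. By the Gauss encoding, $F_{\leq N}$ is the limit set of the finite CIFS $\{\phi_a(x) = 1/(a+x) : 1 \leq a \leq N\}$, which arises as a perturbation of the infinite Gauss CIFS $\{\phi_a : a \in \N\}$ via removal of the tail $\{\phi_a : a > N\}$. The removed generators satisfy $\|\phi_a'\|_\infty \asymp a^{-2}$, hence have uniformly small derivatives once $N$ is large; moreover the ambient infinite system has limit set $[0,1]$ and Bowen parameter equal to $1$. This is exactly the setup of Theorem \ref{maintheorem} with base parameter $s = 1$.

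Applying Theorem \ref{maintheorem} yields an asymptotic expansion whose building blocks are derivatives of the leading eigenvalue $\lambda(s)$ of the Gauss--Ruelle transfer operator $\mathcal L_s$ at $s = 1$, together with tail functionals of the form
\[
Z_k(s) := \sum_{a > N} a^{-2s} \log^k(a),
\]
for $k = 0, 1, \ldots, p - 1$. The analytic engine is Euler--Maclaurin, which gives $Z_k(s) \sim N^{1 - 2s}\, P_k\!\bigl(\log N,\, (2s - 1)^{-1}\bigr)$ for explicit rational $P_k$, plus lower-order corrections. Substituting the self-consistent Ansatz $s_N := \HD(F_{\leq N}) = 1 + \sum_{i \geq 1,\, 0 \leq j \leq i - 1} c_{i,j}\, \log^j(N)/N^i$ and expanding $N^{1 - 2 s_N} = N^{-1} \exp\bigl(-2(s_N - 1)\log N\bigr)$ with $(s_N - 1)\log N = O(\log(N)/N)$ produces precisely the asserted pattern: at order $N^{-i}$, only powers $\log^j(N)$ with $j \leq i - 1$ arise, and the coefficients $c_{i,j}$ are determined recursively by matching orders.

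The main obstacle is uniformity of the remainder: to obtain the error bound $O_p(\log^{p-1}(N)/N^p)$ one needs quantitative spectral control of $\mathcal L_s$ on a complex neighbourhood of $s = 1$ that is uniform as $N \to \infty$, together with sufficient smoothness of the perturbation in the relevant operator norm. This is exactly the content already encapsulated by Theorem \ref{maintheorem}, so the remaining work is algebraic bookkeeping: verifying that the $c_{i,j}$ are effectively computable polynomial combinations of spectral derivatives of $\mathcal L_s$ and of the explicit Euler--Maclaurin constants, and confirming that no logarithmic power $\log^j(N)$ with $j \geq i$ can appear at order $N^{-i}$.
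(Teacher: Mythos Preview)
Your high-level strategy is correct and matches the paper: Theorem \ref{theoremFleqN} is derived by applying Theorem \ref{maintheorem} to the Gauss IFS with $S = S(\N)$, $S' = S(\{1,\ldots,N\})$, $\delta = 1$, then using Euler--Maclaurin on the tail sums and solving iteratively for $\theta = \delta_N - 1$. The paper packages this as Proposition \ref{propositionpolysequence}(i).

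However, your description of the output of Theorem \ref{maintheorem} is inaccurate. The theorem does not produce an expansion whose building blocks are ``derivatives of the leading eigenvalue $\lambda(s)$'' or ``spectral derivatives of $\mathcal L_s$''---that is Hensley's language, and this paper's machinery is organized differently. Theorem \ref{maintheorem} instead yields a power-series equation $\Xi = 0$, where $\Xi = \sum_{I,j} c_{I,j}\,\eta_I\,\theta^j$ (here $\cbar = 0$, so \eqref{mainformula2} applies), with $\eta_i = -\sum_{n > N} n^{-(2(1+\theta)+i)}$ and coefficients $c_{I,j}$ built from the operator $Q = \sum_n R^n$ acting on $g,h$ via $\mu,\nu,\alpha_j,\beta_{i,j}$. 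Your tail functionals $Z_k(s) = \sum_{a>N} a^{-2s}\log^k a$ are not the quantities the paper tracks: the paper indexes by the power $i$ of $b = 1/n$ (the $\eta_i$), not by powers of $\log a$. These parametrizations are related but the bookkeeping is genuinely different, and the ``algebraic bookkeeping'' you wave at would need to be redone from scratch to interface with \eqref{mainformula2}.

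The paper's actual mechanism for the constraint $j \leq i-1$ is also more concrete than your sketch: after Euler--Maclaurin, each $\eta_{i'}$ is written as $N^{-(\wbar\delta+\wbar\theta)}$ times a series in $N^{-1}$ (equation \eqref{etai'} with $\wbar\delta = 1$), so $\eta_I$ carries a factor $N^{-\#(I)(1+2\theta)}$. One then substitutes $\gamma = N\theta$, observes that $\log N$ enters only through $\exp(-2k\theta\log N)$ with $k = \#(I) \geq 1$, and solves for $\gamma$ as a convergent series in $N^{-1}$ and $N^{-1}\log N$. The bound $j \leq k-1$ on the $\log$-power comes from expanding this exponential and the fact that the leading $\gamma$-term has already absorbed one power of $N^{-1}$.
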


Note that by \eqref{hensley} we have $c_{1,0} = -6/\pi^2$ and $c_{2,1} = -72/\pi^4$. Our methods yield explicit formulas for the subsequent coefficients $c_{i,j}$ (see Appendix \ref{appendix1} for some example computations),
but the formulas for $c_{2,0}$ and further coefficients depend on a certain operator $Q$ on the space of H\"older-continuous functions on $[0,1]$, defined in terms of the Gauss--Kuzmin--Wirsing operator $L$ (cf.~Theorem \ref{theoremoperatorequation}). This operator is given as a series and it appears to be quite challenging to give a closed formula for its value on explicitly given functions such as $\one(x) \df 1$. In particular, the precise formula for $c_{2,0}$ in terms of $Q$ is quite complicated; see \eqref{c20}. However, the sequence of coefficients $(c_{i,i - 1})$ turns out to have a relatively simple expression:
\begin{equation}
\label{cii-1}
c_{i,i-1} = -\frac{2^{i - 1}\cdot i^{i - 2}}{(i - 1)!} \left(\frac{6}{\pi^2}\right)^i .
\end{equation}
This includes the two coefficients $c_{1,0}$ and $c_{2,1}$ computed by Hensley.

Our techniques can be used to estimate the Hausdorff dimensions of many different sequences of sets coming from conformal iterated function systems, such as sequences of sets $(F_N)$ where each $F_N$ is specified by restricting continued fraction partial quotients to lie in some set $E_N \subset \N$, such that the sequence of characteristic functions $(\one_{E_N})$ converges pointwise to some characteristic function $\one_E$ (we denote such convergence by $E_N \to E$). In some cases, the formula for the Hausdorff dimension coming from Theorem \ref{maintheorem} ends up being far more complicated than \eqref{HDFleqN}.

For instance, consider $F_{\geq N}$, the set of elements of $[0,1]$ whose continued fraction partial quotients are all $\geq N$. The earliest estimates on the dimension of $F_{\geq N}$ were obtained in the late 1930s by Irving John (Jack) Good. Good's work \cite{Good,Good2}, which was undertaken on Besicovitch's suggestion and awarded the prestigious Smith Prize at the University of Cambridge \cite{Banks}, has since inspired a wealth of research on the dimension theory of continued fraction Cantor sets. Good proved that for $N \geq 20$
\[
\frac12 + \frac{1}{2\log(N+2)} \leq \HD(F_{\geq N}) \leq \frac12 + \frac{\log\log(N-1)}{2\log(N-1)} \cdot
\]
Almost 70 years later, Jaerisch--Kesseb\"ohmer \cite{JaerischKessebohmer} were able to prove an asymptotic improvement on Good by showing that as $N \to \infty$
\[
\HD(F_{\geq N}) - \frac12 \sim \frac{\log\log(N)}{2\log(N)}
\]
Applying our main result, Theorem \ref{maintheorem}, to the sequence of sets $(F_{\geq N})$ leads to the following strengthening of Jaerisch--Kesseb\"ohmer's result:
\begin{theorem}
\label{theoremFgeqN}
For each $p\geq 1$, the Hausdorff dimension of $F_{\geq N}$ can be estimated via the formula
\begin{equation}
\label{HDFgeqN}
\begin{split}
\HD(F_{\geq N}) &= \frac12+ \frac1{2\log(N)}\left[\log\log(N) - \log\log\log(N) + \sum_{k = 1}^\infty \sum_{\ell = 1}^k c_{k,\ell} \frac{\log^\ell \log\log(N)}{\log^k \log(N)}\right.\\
&\left.+ \sum_{i = 1}^{p - 1} \sum_{j = 1}^\infty \sum_{k = -j}^\infty \sum_{\ell = 0}^{j + k} c_{i,j,k,\ell} \frac{\log^\ell\log\log(N)}{N^i \log^j(N) \log^k\log(N)} \right] + O_p\left(\frac{\log\log(N)}{N^p \log(N)}\right)
\end{split}
\end{equation}
where $c_{k,\ell} \in \Q$ and $c_{i,j,k,\ell} \in \Q$ are appropriate constants that can be computed explicitly. For example, $c_{1,1} = -1$, $c_{2,1} = 1$, $c_{2,2} = -1/2$, $c_{3,1} = -1$, $c_{3,2} = 3/2$, $c_{3,3} = -1/3$, and $c_{1,1,-1,0} = 1/2$.
\end{theorem}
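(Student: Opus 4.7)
The plan is to view $F_{\geq N}$ as the limit set of the CIFS $\{\phi_n : n \geq N\}$ with $\phi_n(x) = 1/(n+x)$, and to apply Theorem~\ref{maintheorem} to the sequence $E_N = \{N, N+1, \ldots\}$. Since every remaining generator $\phi_n$ with $n \geq N$ has uniformly small derivative $|\phi_n'| \leq 1/N^2$, the natural framing (dual to the one used for $F_{\leq N}$) is to interpret $F_{\geq N}$ as obtained from the empty system by \emph{adding} the small-derivative generators $\{\phi_n\}_{n \geq N}$. Theorem~\ref{maintheorem} then yields an implicit equation of the kind given in Theorem~\ref{theoremoperatorequation} that expresses $s_N := \HD(F_{\geq N})$ as a convergent series whose principal input is the generating function $Z_N(s) := \sum_{n \geq N} n^{-2s}$, along with contributions from the Gauss--Kuzmin--Wirsing operator $L$ and the auxiliary operator $Q$.

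By Euler--Maclaurin,
\[
Z_N(s) = \frac{N^{1-2s}}{2s - 1} + \frac{1}{2} N^{-2s} + \sum_{m \geq 1} \beta_m(s) N^{-2s - m},
\]
with explicit polynomial-in-$s$ coefficients $\beta_m$ weighted by Bernoulli numbers. Writing $t = 2s - 1 > 0$, the leading-order Bowen equation reduces to $t N^t = 1$; under $\tau := t \log N$ this becomes $\tau e^\tau = \log N$, solved by the Lambert $W$-function as $\tau = W(\log N)$. Its classical asymptotic
\[
W(\log N) = \log\log N - \log\log\log N + \sum_{k \geq 1} \sum_{\ell = 1}^k d_{k,\ell}\frac{\log^\ell\log\log N}{\log^k\log N} + o(1),
\]
with rational $d_{k,\ell}$ expressible via Stirling numbers of the first kind, recovers the top line of \eqref{HDFgeqN} after dividing by $2\log N$; the quoted sample values of $c_{k,\ell}$ then follow (up to the sign conventions fixed by the normalization in Theorem~\ref{maintheorem}) from the $d_{k,\ell}$.

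The quadruple-indexed series in \eqref{HDFgeqN} absorbs all subleading contributions to the Bowen equation: the Euler--Maclaurin tail $\tfrac{1}{2}N^{-2s} + \cdots$, the $x$-dependence $(n+x)^{-2s} = n^{-2s}(1 + x/n)^{-2s}$ of the transfer operator acting on the Perron eigenfunction, and the action of the operator $Q$ from Theorem~\ref{theoremoperatorequation}. Each such correction is a product of a negative integer power of $N$ with a polynomial in $t$; substituting $t = \tau/\log N$ with $\tau$ expanded as above, and iterating, reshuffles these into terms of the form $\log^\ell\log\log N / (N^i \log^j N \log^k\log N)$ with $k \geq -j$ and $0 \leq \ell \leq j+k$. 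The first such correction, from the Euler--Maclaurin term $\tfrac{1}{2}N^{-2s}$, yields $c_{1,1,-1,0} = 1/2$.

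The main technical obstacle is the careful bookkeeping of this asymptotic bootstrap: one must verify that the formal iterates generated by solving the perturbed Bowen equation fit precisely into the claimed scale structure (with the exponent constraints $k \geq -j$ and $0 \leq \ell \leq j + k$), and that truncating at level $p$ produces the uniform remainder $O_p(\log\log N /(N^p \log N))$. This requires an inductive control of how the four scales $N^{-1}, \log N, \log\log N, \log\log\log N$ intermix under iterative inversion, combined with the uniform operator-norm estimates on $L$ and $Q$ supplied by Theorem~\ref{maintheorem}.
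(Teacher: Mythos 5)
Your proposal follows essentially the same route as the paper: Theorem \ref{maintheorem} applied to $E_N=\{N,N+1,\ldots\}\to E=\emptyset$ (adding small-derivative generators to the empty system), Euler--Maclaurin expansion of $\sum_{n\geq N}n^{-2s}$, reduction of the leading equation to $tN^t=1$ and its Lambert-$W$-type log-log-log expansion, with the subleading Euler--Maclaurin and operator contributions bootstrapped into the quadruple sum --- this is exactly Proposition \ref{propositionpolysequence}(ii). The bookkeeping you flag as the main obstacle is precisely what the paper's Lemma \ref{lemmathetageneral} supplies (the change of variables to $\log(1/B)$, $\log\log(1/B)$, $\log\log\log(1/B)$ followed by the implicit function theorem with analytic parameter dependence), supplemented by Propositions \ref{propositionvanishing} and \ref{propositionrational} to get the constraint $j\geq 1$ when $i\geq 1$ and the rationality of the coefficients.
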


\bigskip

{\bf Outline for the sequel.} In Section \ref{sectionoperatorequation} we prove a general result in the setting of Banach spaces that will introduce the key equation leading to \eqref{HDFleqN} and \eqref{HDFgeqN}. In Section \ref{sectionPACIFS} we introduce a class of conformal iterated function systems that includes the class of Gauss IFSes, to which our results will apply. In Section \ref{sectiontheoremscase12} we state our main theorem, of which Theorems \ref{theoremFleqN} and \ref{theoremFgeqN} are special cases. In Section \ref{sectionproof} we prove this theorem, and in Section \ref{sectiongauss} we provide examples where the theorem applies, in particular Proposition \ref{propositionpolysequence} which corresponds to the above theorems. Sections \ref{sectionspectralgap}, \ref{sectionvanishing}, and \ref{sectionlogloglog} contain auxiliary results necessary for these proofs and examples. Section \ref{sectionopen} concludes with some directions for further research. Finally, in Appendix \ref{appendix1} we compute the coefficients $c_{i,i-1}$ and $c_{2,0}$ appearing in Theorem \ref{theoremFleqN}.

\bigskip

{\bf Conventions.} We use the standard Landau notation $O(\cdot)$, $\Theta(\cdot)$, as well as writing $A \lesssim B$ when $A = O(B)$ and $A \equiv_X B$ when $B - A = O(X)$. If $A \lesssim B \lesssim A$, we write $A \asymp B$. Recall that $\Q[x]$ denotes the ring of polynomials in the variable $x$ with coefficients in $\Q$. All linear operators between Banach spaces are assumed to be bounded. 
By default balls are closed, e.g. $B_\C(0,1) \df \{ z \in \C : |z| \leq 1\}$, and we denote open balls with $^\circ$, e.g. $B^\circ_\C(0,1) \df \{ z \in \C : |z| < 1\}$.
Note that we use the Iverson bracket notation in several places in the text: $[\Phi] = 1$ when $\Phi$ is true and $[\Phi] = 0$ when $\Phi$ is false. 
The notation $F_{|i}$ represents the partial derivative of the function $F$ with respect to the $i$th coordinate. Similarly $F_{|ij}$ denotes a double derivative achieved by taking a partial derivative with respect to the $i$th coordinate followed by a partial derivative with respect to the $j$th coordinate; and $F_{|ijk}$ denotes a triple derivative, etc.

\section{An abstract operator formula}
\label{sectionoperatorequation}

The idea for proving Theorem \ref{theoremFleqN} is to consider the Perron--Frobenius operators $L_\infty,L_N: C([0,1]) \to C([0,1])$ defined by the formulas
\begin{align} \label{Ldef}
L_\infty f(x) &= \sum_{\an = 1}^\infty \frac{1}{(\an+x)^2} f\left(\frac{1}{\an+x}\right)\\ \label{LNdef}
L_N f(x) &= \sum_{\an = 1}^N \frac{1}{(\an+x)^{2\delta_N}} f\left(\frac{1}{\an+x}\right)
\end{align}
where $\delta_N$ is the Hausdorff dimension of $F_{\leq N}$. Note that $L_\infty$ is the well-known Gauss--Kuzmin--Wirsing operator\footnote{The operator $L_\infty$ is variously referred to in the literature as the transfer operator for Gauss's continued fraction map, or as the Perron--Frobenius, Ruelle--Perron--Frobenius, Ruelle--Mayer, or Ruelle operator, etc. See e.g. \cite{Wirsing,Mayer3}.}. By well-known dynamical results (see \6\ref{sectionPACIFS}), the definition $\delta_N = \HD(F_{\leq N})$ can be encoded as the assertion that the spectral radius of $L_N$ is 1, which is furthermore equivalent to the assertion that $L_N$ fixes a positive function $g_N$ and the dual operator $L_N^*$ fixes a positive measure $\mu_N$. (Similarly, $L_\infty$ fixes the positive function $g(x) = 1/(1 + x)$, and its dual $L_\infty^*$ fixes $\mu$, the Lebesgue measure on $[0,1]$.) 
We wish to convert this assertion into a formula involving $L_N$, which in turn determines a relation between $N$ and $\delta_N$. 
To this end we introduce some notation.

\begin{notation*}
Let $\BB$ be a complex Banach space, let $\BB^*$ be its dual space, i.e. the Banach space of all bounded linear functionals from $\BB$ to $\C$, and let $\LL(\BB)$ denote the Banach space of all bounded linear operators from $\BB$ to $\BB$. 
Fix $f\in \BB$ and $\sigma\in \BB^*$. Then $\sigma f$ denotes the value of $\sigma$ on $f$, while $f \sigma$ (which is our shorthand for $f \otimes \sigma$) denotes the element of $\LL(\BB)$ defined as $(f\sigma)f' \df (\sigma f') f$.
Note that $f \sigma$ is a projection when $\sigma f = 1$. If $\sigma f \neq 0$, then $(\sigma f)^{-1} f \sigma$ is a projection, while if $\sigma f = 0$, then $f \sigma$ is a nilpotent operator of order $2$.
If $L \in \LL(\BB)$, then $\sigma L$, is the element of $\BB^*$ defined by the formula $(\sigma L) f \df \sigma (L f)$. The map $L^* : \sigma \mapsto \sigma L$ from $\BB^*$ to $\BB^*$ is called the dual operator of $L$. However, we avoid using the notation $L^*$ in formulas, so we will write $\sigma L$ rather than $L^* \sigma$. 
This notation is analogous to the notation used in matrix multiplication, with elements of $\BB$, $\BB^*$, and $\LL(\BB)$ corresponding to column, row, and square matrices, respectively. In particular, the associative laws 
\begin{align*}
&(\sigma L) f = \sigma (L f), &
&(f \sigma) f' = f (\sigma f'), &
&\sigma' (f \sigma) = (\sigma' f) \sigma
\end{align*} 
all hold by definition.
If $L f = f$ we call $f$ a right fixed point of $L$, and if $\sigma L = \sigma$ we call $\sigma$ a left fixed point of $L$ (or equivalently, a fixed point of the dual operator $L^*$).
\end{notation*}

\begin{theorem}
\label{theoremoperatorequation}
Let $\BB$ be a Banach space. Suppose that $L$ and $L'$ in $\LL(\BB)$ have respective right fixed points $g,g'$, and let $\mu \in \BB^*$ be a left fixed point of $L$, such that $\mu g , \mu g' \neq 0$. Let $$R \df L - c g \mu \in \LL(\BB),$$ where $c = 1/\mu g$, and let $$\Delta \df L' - L \in \LL(\BB),$$ and $\rho(R)<1$, where $\rho$ denotes the spectral radius. Also suppose that
\begin{equation}
\label{RnDeltabound}
\sum_{n = 0}^\infty \|R^n\|\cdot\|\Delta\| < 1.
\end{equation}
Then
\begin{equation}
\label{operatorequation}
\sum_{\Qpower = 0}^\infty \mu \Delta (Q\Delta)^\Qpower g = 0,
\end{equation}
where $Q \df \sum_{n = 0}^\infty R^n \in \LL(\BB)$. Note that $Q$ is well-defined since $\rho(R)<1$.
\end{theorem}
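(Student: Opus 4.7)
The plan is to identify the perturbed right fixed vector $g'$ with the explicit Neumann-type series $\sum_{p \geq 0} (Q\Delta)^p g$, and then derive \eqref{operatorequation} by applying the functional $\mu\Delta$ to both sides. The core observation is that $\mu L = \mu$ and $L' g' = g'$ together force
$$\mu \Delta g' \;=\; \mu L' g' - \mu L g' \;=\; \mu g' - \mu g' \;=\; 0,$$
so once the series representation of $g'$ is in hand the conclusion is immediate. Before anything else I would rescale $g'$ (permissible because $\mu g' \neq 0$ and right fixed vectors of $L'$ form a linear subspace) so that $\mu g' = \mu g$; this normalization is what will let us pin down the resulting functional equation for $g'$.

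The main algebraic step is to record the behaviour of the rank-one projection $P \df cg\mu$ and of $Q$. Using $Lg = g$, $\mu L = \mu$, and $c\mu g = 1$ one verifies directly that $P^2 = P$, $LP = PL = P$, $Rg = 0$, and $\mu R = 0$. Together with termwise manipulation of the geometric series defining $Q$ (which converges in operator norm since $\rho(R) < 1$), these identities yield $Qg = g$, $\mu Q = \mu$, $QP = PQ = P$, and, most importantly, the two-sided identity
$$Q(I - L) \;=\; (I - L)Q \;=\; I - P.$$

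With this toolbox in place, $L' g' = g'$ rearranges to $(I - L)g' = \Delta g'$; applying $Q$ on the left yields $(I - P)g' = Q\Delta g'$, and the normalization $\mu g' = \mu g$ gives $(I - P)g' = g' - cg(\mu g') = g' - g$, so
$$g' \;=\; g + Q\Delta g'.$$
Hypothesis \eqref{RnDeltabound} is precisely what is needed to iterate this: it implies $\|Q\Delta\| \leq \left(\sum_n \|R^n\|\right)\|\Delta\| < 1$, so the Neumann series converges in $\LL(\BB)$ and gives $g' = \sum_{p=0}^\infty (Q\Delta)^p g$. Applying $\mu \Delta$ on the left and invoking the vanishing of $\mu \Delta g'$ from the opening paragraph produces \eqref{operatorequation}. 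The only real place where care is needed is the algebra around $P$ and $Q$ — in particular, being explicit about how $Lg = g$ and $\mu L = \mu$ enter symmetrically to give $Q(I - L) = I - P$; once that symmetry is made visible the rest of the argument is routine bookkeeping.
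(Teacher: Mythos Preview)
Your proof is correct and takes a route dual to the paper's. The paper constructs a perturbed \emph{left} functional $\mu' \df \sum_{m\ge 0}\mu(R+\Delta)^m$, uses the right fixed point $g'$ of $L'$ to establish $\mu' g = \mu g$, and then expands $(R+\Delta)^m$ via the distributive law into the form $\sum_p (Q\Delta)^{p-1} Q$. You instead solve directly for the (normalized) \emph{right} fixed vector via the resolvent identity $g' = g + Q\Delta g'$ and then hit both sides with $\mu\Delta$, having isolated the key vanishing $\mu\Delta g' = 0$ at the outset. Your argument is somewhat more economical: the fixed-point iteration replaces the combinatorial expansion of $(R+\Delta)^m$, and hypothesis \eqref{RnDeltabound} enters exactly once to bound $\|Q\Delta\|$. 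The paper's route, on the other hand, makes the general operator identity $\sum_m (R+\Delta)^m = \sum_p (Q\Delta)^{p-1} Q$ explicit and naturally suggests the dual formulation noted in the remark following the theorem (replacing the hypothesis on $g'$ by one on a left fixed point $\mu'$ of $L'$).
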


\begin{proof}
The idea is to start with the equation $L' g' = g'$, expressing the fact that $g'$ is a right fixed point for $L'$, then multiply on the left by a measure $\mu'$ to get a scalar equation, and finally rearrange to get \eqref{operatorequation}. Specifically, let
\[
\mu' \df \sum_{m = 0}^\infty \mu (L' - c g \mu)^m = \sum_{m = 0}^\infty \mu (R + \Delta)^m
\]
(We will show later that this series converges in $\BB^*$.) We have $\mu' = \mu + \mu' (L' - c g \mu)$,\Footnote{Plugging in the formula $\mu' g = \mu g$ proven below, it follows that $\mu'$ is a left fixed point of $L'$. However, this fact is irrelevant to the proof, except as an indicator that our choice of $\mu'$ is not as arbitrary as it may initially appear to be.} and thus
\[
\mu' L' - \mu' = \mu' L' - (\mu + \mu' (L' - c g \mu)) = (c \mu' g  - 1) \mu .
\]
Multiplying on the right by $g'$ and using the fact that $g'$ is fixed gives
\[
0 = \mu' L' g' - \mu' g' = (c \mu' g - 1) (\mu g')
\]
and thus since $\mu g' \neq 0$,
\[
\mu g = 1/c = \mu' g = \sum_{m = 0}^\infty \mu (R + \Delta)^m g.
\]
Now by the distributive law $\sum_{m=0}^\infty (R + \Delta)^m$ is the sum of all finite ordered products of $R$ and $\Delta$, i.e.
\[
\sum_{m=0}^\infty (R + \Delta)^m = \sum_{\Qpower = 1}^\infty \sum_{n_1,\ldots,n_\Qpower} R^{n_1} \Delta R^{n_2} \cdots R^{n_{\Qpower-1}} \Delta R^{n_\Qpower} = \sum_{\Qpower = 1}^\infty (Q\Delta)^{\Qpower-1} Q.
\]
These three series all converge absolutely since
\begin{align*}
\sum_{m=0}^\infty \| (R + \Delta)^m \| 
&\leq \sum_{\Qpower = 1}^\infty \sum_{n_1,\ldots,n_\Qpower} \|R^{n_1} \Delta R^{n_2} \cdots R^{n_{\Qpower-1}} \Delta R^{n_\Qpower}\| \\
&\leq \sum_{\Qpower=1}^\infty \left(\sum_{n=0}^\infty \|R^n\|\right)^\Qpower \|\Delta\|^{\Qpower-1} \underset{\eqref{RnDeltabound}}{<} \infty.
\end{align*}
Note that this implies that the series defining $\mu'$ converges.

Thus, we have
\[
\mu g = \sum_{\Qpower = 0}^\infty \mu (Q\Delta)^\Qpower Q g = \mu Q g + \sum_{\Qpower = 0}^\infty \mu Q\Delta (Q\Delta)^\Qpower Q g.
\]
Since $g$ is a right fixed point for $L$, we have $R g = L g - c g \mu g = g - g = 0$, and thus $Q g = g$. Similarly, since $\mu$ is a left fixed point for $L$, we have $\mu R = 0$ and $\mu Q = \mu$. 
Finally, using the identities $Q g = g$ and $\mu Q = \mu$ in the previous displayed equation, we derive \eqref{operatorequation}.
\end{proof}

\begin{remark*}
The hypothesis that $g'$ is a right fixed point of $L'$ such that $\mu g' \neq 0$ may be replaced by the hypothesis that $\mu'$ is a left fixed point of $L'$ such that $\mu' g \neq 0$, with minimal changes to the proof. (Both hypotheses are satisfied in our applications of Theorem \ref{theoremoperatorequation}.)
\end{remark*}

\begin{remark*}
\label{remarkspectral}
Since $R g = 0$ and $\mu R = 0$ (see the last paragraph of the proof above), it follows that for all $n \geq 1$ we have 
\[
L^n = (R + c g \mu)^n = R^n + (c g \mu)^n = R^n + c g \mu,
\] 
and thus
\[
Q = I + \sum_{n = 1}^\infty (L^n - c g \mu).
\]
\end{remark*}

\begin{remark*}
In the case where $L$ and $L'$ are Perron--Frobenius operators of similarity IFSes, \eqref{operatorequation} reduces to the Moran--Hutchinson equation for the latter IFS (assuming that for the former), see Proposition \ref{propositionhutchinson}.
\end{remark*}

The idea of the proof of Theorem \ref{theoremFleqN} is now to apply Theorem \ref{theoremoperatorequation} with $L = L_\infty$ as in \eqref{Ldef} and $L' = L_N$ as in \eqref{LNdef}, and then to solve the resulting formula \eqref{operatorequation} for $\delta_N$. This determines the sought-after relation between $N$ and $\delta_N$. We refer to the subsequent sections for details on how this is implemented. The idea of the proof of Theorem \ref{theoremFgeqN} is similar, except instead of taking $L$ as in \eqref{Ldef} we take $L f(x) = f(0)$, or equivalently $L = h \nu$ where $\nu$ is the Dirac point mass at $0$ and $h = \one$ (the motivation for this choice will become clear in subsequent sections, in particular Lemma \ref{lemmabetaf} and Remark \ref{remarkgaussvalues}).

\section{Point-accumulating conformal iterated function systems}
\label{sectionPACIFS}

The sets $F_{\leq N}$ and $F_{\geq N}$ can be viewed as limit sets of certain \emph{conformal iterated function systems}, or CIFSes. CIFSes were introduced by Mauldin and Urba\'nski \cite{MauldinUrbanski1} (see Appendix \ref{appendixCIFS} for their definition), and their generalizations \emph{conformal graph directed Markov systems} (CGDMSes) were studied in \cite{MauldinUrbanski2}. We will consider a certain class of CIFSes, which we define as follows.

\begin{definition}
\label{definitionPACIFS}
Fix a quintuple $(U,V,u,v,q)$ such that
\begin{itemize}
\item $U\subset\R$ is a bounded open set containing $0$;
\item $V \subset \R$ is a bounded connected open set;
\item $u:U\times V \to V$ is an real-analytic map such that
\begin{itemize}
\item the family of maps $(u_b)_{b \in U}$ defined by 
\[
V\ni x\mapsto u_b(x) = u(b,x) \in V
\] 
is either uniformly contracting (i.e. for each $(b,x) \in U\times V$, we have $|u_b'(x)| \leq \lambda$ for some uniform constant $\lambda<1$), or conjugate to a uniformly contracting family, and
\item $\bigcup_{b\in U} u_b(V)$ is precompact in $V$; and
\end{itemize}
\item $v:U\times V \to \R$ is a bounded real-analytic function and $q > 0$ is a parameter such that for all $b\in U$ and $x\in V$,
\begin{equation}
\label{vdef}
|u_b'(x)| = |b|^q e^{v(b,x)}.
\end{equation}
Note that \eqref{vdef} implies that $u_0$ is constant; for convenience, in what follows we assume that this constant is $0$, i.e. that $u_0(x) = 0$ for all $x$.
Formula \eqref{vdef} also implies that for all $x\in V$, $q$ is the order of the analytic function $b \mapsto u_b(x)$ at $0$. 
\end{itemize}
Then if $S \subset U\butnot\{0\}$ is a set whose only accumulation point, if any, is 0, then the family of maps $(u_b)_{b\in S}$ is called a \emph{point-accumulating conformal iterated function system (PACIFS)} over $(U,V,u,v,q)$. For conciseness we will usually omit ``over $(U,V,u,v,q)$'' when referring to a PACIFS $(u_b)_{b\in S}$.

The \emph{limit set} of the PACIFS $(u_b)_{b\in S}$ is denoted by $\Lambda_S$, and is the image of the projection map $\pi: \Sigma \df S^\N \to V$ defined by the formula
\[
\pi(\omega) = \lim_{n\to\infty} u_{\omega_1}\circ\cdots\circ u_{\omega_n}(x_0),
\]
where $x_0\in V$ is an arbitrary point. This limit exists and is independent of $x_0$ because the maps $(u_b)_{b\in S}$ are uniformly contracting and $V$ is connected, and $\Lambda_S \subset V$ because of the assumption that $\bigcup_{b\in U} u_b(V)$ is precompact in $V$.
\end{definition}

Note that in the sequel we will consider $\Sigma$ as a metric space under the metric $\dist(\sigma,\tau) = \lambda^{|\sigma\wedge\tau|}$, where $0 < \lambda < 1$ is fixed and $|\sigma\wedge\tau|$ is the length of the longest common initial segment $\sigma\wedge\tau$ of $\sigma$ and $\tau$.

\begin{remark*}
Our results also hold in the following more general setting: $U = \bigcup_{i=1}^k (U_i\times \{i\}) \cup F$, where each $U_i$ is a bounded open subset of $\R^{d_i}$ containing $0$, $F$ is a finite set, $V$ is a bounded connected open subset of $\R^d$, $u,v$ are analytic on each $U_i\times\{i\}\times V$ and on each $\{b\}\times V$ for $b\in F$, and each matrix $u_{b,i}'(x)$ is a similarity with dilatation factor $|u_{b,i}'(x)|$, and \eqref{vdef} is replaced by the formula $|u_{b,i}'(x)| = |b|^{q_i} e^{v_i(b,x)}$, where $b\in U_i$ and $x\in V$. This can be proven with only minor modifications (but notational complications) to the definitions and proofs.
\end{remark*}

\begin{remark*}
Our PACIFSes are not always CIFSes in the sense of \cite{MauldinUrbanski1,MauldinUrbanski2}, because they do not necessarily satisfy the open set condition (OSC), see Appendix \ref{appendixCIFS}. However, in Definition \ref{definitionOSCPACIFS} we define the class of OSC PACIFSes, and these are CIFSes in the sense of \cite{MauldinUrbanski1,MauldinUrbanski2}.
\end{remark*}

\subsection{Example of PACIFS: Gauss IFSes}
\label{subsectiongauss}
Let $U = V = (-\epsilon,1 + \epsilon)$ with $0 < \epsilon < 1$ and consider the maps $u: U\times V \to V$, $v: U\times V \to \R$ defined by
\[
u(b,x) \df \frac{b}{1+bx} , \;\;
v(b,x) = -2\log(1 + bx)
\]
and let $q=2$. Then the tuple $(U,V,u,v,q)$ satisfies the requirements of Definition \ref{definitionPACIFS} except for the requirement of uniform contraction; but after conjugating by e.g. the map $\phi(x) = 1/(1+x)$, the family $(\phi\circ u_b\circ \phi^{-1})_{b\in U}$ is uniformly contracting, so our main results will apply to this system as well. Thus, each set $E \subset \N$ corresponds to an OSC pseudo-PACIFS
\[
S(E) \df \{ 1/\an : \an \in E \}
\]
corresponding to the Gauss IFS $\{u_{1/\an}:\an\in E\}$, where
\[
u_{1/\an}(x) = \frac{1}{\an + x}\cdot
\]
For each sequence $\an_1,\an_2,\ldots\in\N$ we hae
\[
\pi(1/\an_1,1/\an_2,\ldots) = [0;\an_1,\an_2,\ldots]
\]
where $[0;n_1,n_2,\ldots]$ represents the continued fraction expansion with partial quotients $n_1,n_2,\ldots$ The limit set of $S(E)$ is thus
\[
\Lambda_{S(E)} = F_E \df \{[0;\an_1,\an_2,\ldots] : \an_1,\an_2,\ldots \in E\}.
\]

\subsection{Examples of CIFS that are not PACIFS} Though this paper is concerned with PACIFSes, we include two non-examples for the benefit of our readers who are familiar with the well-studied notion of CIFSes.

\begin{example}
\label{examplecocantor}
Let $C$ be the middle-thirds Cantor set, and let $\II$ be the unique disjoint collection of intervals such that
\[
[0,1]\butnot C = \bigcup_{I\in \II} \Int(I).
\]
For each $I\in \II$, let $u_I:[0,1] \to I$ be the unique order-preserving bijective similarity between $[0,1]$ and $I$. Then $(u_I)_{I\in\II}$ is a similarity IFS (and thus also a conformal IFS), but it cannot be realized as a PACIFS. Indeed, if $(u_b)_{b\in S}$ is a PACIFS then $u_b \to p$ uniformly for some point $p$ (with $p=0$ according to our convention), but if $(I_n)$ is a sequence of distinct elements of $\II$, then the limit of the sequence $(u_{I_n})$ can be any point in $C$, and in particular is not limited to a single point.
\end{example}

\begin{example}
\label{exampleNotPACIFS}
For each $n$ let $u_n:[0,1] \to [0,1]$ be defined by
\[
u_n(x) = \frac{1 + x^n}{2^n}\cdot
\]
Then $(u_n)_{n\geq 1}$ is a conformal IFS, since the sequence $(u_n)$ is bounded in the $\CC^2$ norm. However, it cannot be obviously realized as a PACIFS, since this would require a finite-dimensional space $U$ to be able to parameterize the sequence $(u_n)$, and no such space exists. (The sequence $(u_n)$ is most obviously parameterized by the infinite-dimensional sequence $((e_0 + e_n)/2^n) \in \R^\N$ under the function $u(b,x) = \sum_k b_k x^k$, but moving to infinite dimensions would cause problems with the convergence of some series used the proof of Theorem \ref{maintheorem} below.)
\end{example}

\subsection{Symbolic and geometric Perron--Frobenius operators}

For the remainder of this section, we fix $(U,V,u,v,q)$ and let $(u_b)_{b\in S}$ be a PACIFS as in Definition \ref{definitionPACIFS}. 

\begin{definition}
\label{definitionsymbolicPF}
Fix $s \in \R$. If $\sum_{b\in S} |b|^{qs} < \infty$, we let $\w L = \w L_{S,s}:C(\Sigma)\to C(\Sigma)$ denote the \textbf{symbolic Perron--Frobenius operator}
\begin{equation}
\label{symbolicPF}
\w L f(\omega) \df \sum_{b\in S} \left|u_b'\circ\pi(\omega)\right|^s f(b\ast \omega),
\end{equation}
where $\ast$ denotes concatenation, and we let 
\[
P = P(S,s) \df \log\rho(\w L)
\] 
denote the logarithm of the spectral radius of $\w L$. If $\sum_{b\in S} |b|^{qs} = \infty$, then $\w L$ is not defined and we instead let $P \df +\infty$. Note that by \eqref{vdef}, we have 
\begin{equation}
\label{partitionfunction}
e^P \asymp \|\w L\| \asymp \sum_{b\in S} |b|^{qs},
\end{equation}
where the middle expression is interpreted as $\infty$ when $\w L$ is not defined.
Here the implied constants may depend on $(U,V,u,v,q)$ but not on the PACIFS $(u_b)_{b\in S}$.
\end{definition}

We now wish to recall several results from \cite{MauldinUrbanski2}. These results are generally stated for what \cite{MauldinUrbanski2} calls CIFSes, and what we will call OSC CIFSes (because they assume the open set condition in addition to conformality). Although PACIFSes are not necessarily OSC CIFSes, we can show that they satisfy \cite[\64.2: (4d),(4e)]{MauldinUrbanski2} in the definition of OSC CIFSes as well as parts of \cite[\64.2: (4a),(4c)]{MauldinUrbanski2}:
\begin{itemize}
\item[(4a),(4d)] Let $X = \{x\in V : \dist(x,\Lambda_S) \leq \epsilon\}$ for some sufficiently small $\epsilon > 0$. This satisfies all desired properties except connectedness.
\item[(4c)] Let $W = V$. This satisfies all desired properties except that the extension may not be globally invertible (it is locally invertible).
\item[(4e)] By \eqref{vdef}, this is true with $\alpha=1$.
\end{itemize}
These properties are enough to prove the following results for all PACIFSes. However, we note that for our main example of Gauss IFSes, all conditions of the OSC CIFS definition are satisfied.

\begin{itemize}
\item[(A1)] Convex and decreasing pressure function: $P(S,\cdot)$ is equal to the standard pressure function of $(u_b)_{b\in S}$ (cf.~\cite[(2.1) / pp.54-55 / p.78]{MauldinUrbanski2}) and in particular is convex and decreasing \cite[Proposition 4.2.8(b)]{MauldinUrbanski2}
\item[(A2)] Existence of eigenfunctions and eigenmeasures: for each $s\geq 0$ such that $-\infty < P < +\infty$, for some $\beta > 0$ there exist a positive $\beta$-H\"older continuous function $\w g = \w g_{S,s} \in \BB = \HH^\beta(\Sigma)$ and a positive measure $\w \mu = \w \mu_{S,s} \in \MM_+(\Sigma) \subset \BB^*$ such that 
\begin{align*}
&\w L \w g = e^P \w g &
&\text{and} &
&\w \mu \w L = e^P \w \mu
\end{align*}
see \cite[Theorem 2.7.3 / 3.2.3 / 6.1.2 and Theorem 2.4.3]{MauldinUrbanski2}. Here, we recall that $\w g$ is called $\beta$-H\"older continuous if $\sup_{x,y} \frac{\dist(\w g(x),\w g(y))}{\dist(x,y)^\beta} < \infty$. Note that if $P = 0$, then this means that $\w g$ and $\w \mu$ are right and left fixed points, respectively, for $\w L$.
\item[(A3)] Spectral gap: with $s,\beta$ as above, if $c = 1/\w \mu \w g$, then 
\[
\|e^{-n P} \w L^n - c \w g \w \mu\|_\beta \leq C \gamma^n
\]
for some $C < \infty$ and $\gamma < 1$ \cite[Theorem 2.4.6(b)]{MauldinUrbanski2}. Here the notation $\|\cdot\|_\beta$ means that the operator norm is taken with respect to the space $\BB = \HH^\beta(\Sigma)$, rather than the space $C(\Sigma)$ that $\w L$ was originally defined on.
\item[(A4)] Invariant measure: the shift map $\sigma: \Sigma \to \Sigma$ defined by
\[
\sigma(b \ast \omega) = \omega
\]
has the invariant measure
\[
\w\nu = c \w \mu M_{\w g},
\]
where $M_{\w g}$ denotes the operator of multiplication by $\w g$ \cite[Proposition 2.4.7]{MauldinUrbanski2}.
\end{itemize}

\begin{remark}
\label{remarkspectralgap}
In (A3), we mean a spectral gap in the sense of \cite{Rugh2}. Indeed, let $\w R \df \w L - c e^P \w g \w \mu$. The inequality $\|\w R^n\| \leq C e^{P n} \gamma^n$ implies that $\rho(\w R) \leq e^P \gamma < \rho(\w L)$, and conversely if $\rho(\w R) < \rho(\w L)$, we may take $\gamma \in e^{-P} (\rho(\w R),\rho(\w L))$, and then we have $\|\w R^n\| \leq C e^{n P} \gamma^n$ for some $C$. So the inequality $$e^{-n P} \|\w R^n\| = \| e^{-n P} \w L^n - c \w g \w \mu\| \leq C \gamma^n$$ is equivalent to the operator $\w L$ having a spectral gap in the sense of \cite{Rugh2}: $\w L$ has a simple isolated eigenvalue the modulus of which equals $\rho(\w L)$, and that the remaining part of the spectrum is contained in a disk centered at zero and of radius strictly smaller than $\rho(\w L)$.
\end{remark}

For the purpose of later calculation, we define and compute the Lyapunov exponent of the dynamical system $(\sigma,\w\nu)$:
\begin{equation}
\label{chidef}
\begin{split}
\chi &\df \int_\Sigma \log\left|(u_{\omega_1}^{-1})'\circ\pi(\omega)\right|\; \dee \w\nu(\omega)\\
&= \sum_{b\in S} \int_{b \ast \Sigma} \log\left|(u_b^{-1})'\circ\pi(\omega)\right| \; \w g(\omega) \;\dee\w \mu(\omega)\\
&= -e^{-P} \int_\Sigma \sum_{b\in S} \left|u_b'\circ\pi(\omega)\right|^s \log\left|u_b'\circ\pi(\omega)\right| \; \w g(b \ast \omega) \;\dee\w \mu(\omega)
\end{split}
\end{equation}
Note that $\chi > 0$, since $|(u_{\omega_1}^{-1})' \circ \pi(\omega)| > 1$ for all $\omega\in\Sigma$.

In what follows we will need a version of the Perron--Frobenius operator that operates on the space of holomorphic functions on a complex neighborhood of $\Lambda_S$. 

\begin{definition}
\label{definitioncomplexPF}
Let $U_\C,V_\C\subset \C$ be neighborhoods of $S\cup\{0\}$ and $\cl{\Lambda_S}$, respectively, such that $V_\C$ is connected, and $u,v$ can be extended to bounded holomorphic functions from $U_\C\times V_\C$ to $V_\C$ and to $\C$ respectively, such that the family of maps $(u_b)_{b\in U}$ is still uniformly contracting.
For each $s \in \R$ such that $\sum_{b\in S} |b|^{qs} < \infty$ we consider the \textbf{geometric Perron--Frobenius operator} $L = L_{S,s}: C(V_\C)\to C(V_\C)$ defined by the formula
\begin{equation}
\label{complexPF}
L f(x) \df \sum_{b\in S} |b|^{q s} e^{s v(b,x)} \; f\circ u_b(x),
\end{equation}
which we will consider throughout the paper. Note that $L$ is related to the $\w L$ by the semiconjugacy relation 
\[
\w L \,\Pi = \Pi \,L,
\] 
where the operator $\Pi : \Lip(V_\C) \to \HH^\beta(\Sigma)$ defined by $\Pi f = f\circ \pi$ is continuous but usually not surjective. 
\end{definition}

Note that the following bounded distortion property holds: for all $n\in\N$, $\omega \in S^n$, and $x \in V_\C$, we have 
\[ \big|v(\omega_1,x) + v(\omega_2,u_{\omega_1}(x)) + \ldots + v(\omega_n,u_{\omega_{n-1}}\circ\cdots\circ u_{\omega_1}(x)) \big| \lesssim 1. 
\] 
This is because of the uniform contraction property of $(u_b)_{b\in S}$, together with the Lipschitz continuity of $v$.

The arguments of \cite[\62]{MauldinUrbanski2} can be easily adapted to the setting of \eqref{complexPF}, yielding the following results:
\begin{itemize}
\item[(B1)] Spectral Radius: Due to the bounded distortion property described above, the spectral radii of the operators \eqref{symbolicPF} and \eqref{complexPF} are both equal to $e^P$.
\item[(B2)] Existence of eigenfunctions and eigenmeasures: If $-\infty < P < +\infty$, then there exist a Lipschitz continuous function $g \in \BB = \Lip(V_\C)$ which is positive on $V_\R \df V_\C \cap \R$ and a positive measure $\mu \in \MM_+(V_\C) \subset \BB^*$ such that 
\begin{align*}
&L g = e^P g &
&\text{and} &
&\mu L = e^P \mu.
\end{align*} 
After renormalization, we have $\Pi \,g = \w g$ and $\w\mu \,\Pi = \mu$; in particular, $\mu$ is supported on $\Lambda_S$.
\item[(B3)] Spectral gap\Footnote{See Remark \ref{remarkspectralgap}.}: We have 
\[
\|e^{-n P} L^n - c g \mu\|_1 \leq C \gamma^n
\] 
for some $C < \infty$ and $\gamma < 1$, where $c = 1/\mu g$, and $\|\cdot\|_1$ indicates that the operator norm is being taken with respect to the space $\BB = \Lip(V_\C)$, rather than the space $C(V_\C)$ that $L$ was originally defined on.
\item[(B4)] Lyapunov exponent: Using the formulas $\w L \,\Pi = \Pi \,L$, $\Pi \,g = \w g$, and $\w \mu\, \Pi = \mu$, we get
\[
\chi = -c e^{-P} \mu \alpha_1 g,
\]
where
\begin{equation}
\label{alpha1def}
\alpha_1 f(x) \df \sum_{b\in S} |u_b'(x)|^s \log|u_b'(x)| \; f\circ u_b(x).
\end{equation}
In what follows we will need to consider the ``unnormalized'' Lyapunov exponent
\[
\w\chi \df c^{-1} e^P \chi = -\mu \alpha_1 g > 0.
\]
\end{itemize}
Now by (B3) we have
\[
e^{-n P} L^n \one \to c g \mu \one
\]
uniformly, and since $L$ preserves the space of holomorphic functions, it follows that $g$ is holomorphic\Footnote{A similar result was proven in \cite[Corollary 6.1.4]{MauldinUrbanski2}, though the hypotheses and conclusion are somewhat different. Note that the invariance hypothesis on $U$ in \cite[Corollary 6.1.4]{MauldinUrbanski2} should be that each element of $S$ can be extended to a univalent holomorphic map from $U$ to itself, rather than what is written there.}.

Although (B3) is stated only for the Lipschitz norm $\|\cdot\|_1$, for holomorphic functions it holds for the sup norm $\|\cdot\|_\infty$ as well. Indeed, recall Cauchy's inequality: for every bounded holomorphic function $f$ whose domain includes $B_\C(z,\rho)$ we have
\begin{equation}
\label{cauchy}
\frac1{i!} |f^{(i)}(z)| \leq \rho^{-i} \|f\|_\infty .
\end{equation}
In particular, if $K \subset V_\C$ is compact then $\|f \given K\|_{1} \lesssim \|f\|_\infty$. Since $\bigcup_{b\in U} u_b(V_\C)$ is precompact in $V_\C$, it follows that $\|L f\|_1 \lesssim \|f\|_\infty$ and thus
\[
\|e^{-n P} L^n f - c g \mu f\|_\infty \leq \|e^{-n P} L^n f - c g \mu f\|_1 \leq C \gamma^{n - 1} \|L f\|_1 \lesssim \gamma^n \|f\|_\infty
\]
and therefore we have 
\[
\|e^{-n P} L^n - c g \mu\|_\infty \lesssim \gamma^n.
\]

The value of $s$ such that $P(S,s) = 0$ is particularly important, if such a value exists, hence we make the following definition:
\begin{definition}[Cf. {\cite[p.78 and Definition 4.3.1]{MauldinUrbanski2}}]
Given a set $S\subset U\butnot\{0\}$ as in Definition \ref{definitionPACIFS}, we call $S$ as well as the associated PACIFS $(u_b)_{b\in S}$ \emph{regular} if there exists $\delta\geq 0$ such that $P(S,\delta) = 0$, and \emph{strongly regular} if furthermore there exists $\kappa > 0$ such that $P(S,\delta - \kappa) < +\infty$. Equivalently, $S$ is strongly regular if there exists $s$ such that $0 < P(S,s) < +\infty$.
\end{definition}

\subsection{Bowen's formula}

In what follows we let
\[
\delta = \delta_S \df \inf\{s \in \R : P(S,s) \leq 0\},
\]
and we notice that $P(S,\delta_S) = 0$ if and only if $S$ is regular. If $S$ is regular, we write $L_S = L_{S,\delta_S}$. We also let
\[
\Theta_S \df \inf\{s \in\R : P(S,s) < +\infty\}
\]
and we note that $S$ is strongly regular if and only if $\delta_S > \Theta_S$.

Finally, for our last result we need to assume the OSC, which we define as follows:
\begin{definition}
\label{definitionOSCPACIFS}
A PACIFS $(u_b)_{b\in S}$ satisfies the \emph{open set condition (OSC)} if there exists a connected open set $W$ precompact in $V$ such that:
\begin{itemize}
\item $(u_b(W))_{b\in S}$ is a disjoint collection of subsets of $W$;
\item for each $b \in S$, $u_b$ is injective.
\end{itemize}
\end{definition}
It is easily verified (by letting $X = \cl W$) that every OSC PACIFS is an OSC CIFS (as recalled in Appendix \ref{appendixCIFS}).

For every OSC PACIFS, we have the following:
\begin{itemize}
\item[(A5,B5)] Bowen's formula: The Hausdorff dimension of $\Lambda_S$ is 
\[
\HD(\Lambda_S) = \delta_S,
\] 
see \cite[Theorem 4.2.13]{MauldinUrbanski2}.
Note that the pressure function $P(S,\cdot)$ appearing in the definition of $\delta_S$ can be expressed as either $P(S,s) = \log \rho(\w L)$ or as $P(S,s) = \log\rho(L)$, so this result can be thought of as being about both the symbolic and the geometric Perron--Frobenius operators.
\end{itemize}


\section{Statement of main result}

\label{sectiontheoremscase12}

Our main result is an application of Theorem \ref{theoremoperatorequation} to the situation where a PACIFS $(u_b)_{b\in S}$ is being approximated by another PACIFS $(u_b)_{b\in S'}$. Thus, we fix $(U,V,u,v,q)$, and we let $U_\C,V_\C\subset \C$ be as above.

In what follows all operators will be interpreted as acting on the Banach space $\BB = \H(V_\C)$, where $\H(V_\C)$ denotes the Banach space of bounded holomorphic functions on $V_\C$, endowed with the sup norm.

\ignore{
\begin{theorem}
\label{theoremcase1}
Let $(U,V,u,v,q)$ be as in Definition \ref{definitionPACIFS}, and fix $S \subset U\butnot\{0\}$. Let $\delta = \delta_S$, fix $\kappa > 0$, and suppose that $\zeta(\kappa) < \infty$. Then there exist $\zetabound,\epsilon > 0$ such that for all $S' \subset U\butnot\{0\}$ such that $\zeta(\kappa) \leq \zetabound$ and $\|S\triangle S'\| \leq \epsilon$, we have
\begin{equation}
\label{case1formula}
\delta_{S'} = \delta + \sum_{\substack{I\\ \mj(I)\leq \#(I) - 1}} c_I \, \eta_I
\end{equation}
for some constants $c_I$, with $c_\0 = 0$ and $c_{\{(0,0)\}} = \frac{\mu h \nu g}{\w\chi} > 0$, where $\0$ is the empty multiset and $\{(0,0)\}$ is the singleton multiset containing $(0,0)$ with multiplicity 1. Moreover, for all $I$ we have
\[
|c_I| \lesssim \zetabound^{-\#(I)} \epsilon^{-\Sigma(I)} \kappa^{-\mj(I)}
\]
and thus for all $K,I$,
\[
\delta_{S'} = \delta + \sum_{\substack{I \\ \#(I) < K \\ \Sigma(I) \leq I}} c_I \, \eta_I(S') = O\left(\left(\frac{\zeta(\kappa)}{\zetabound}\right)^K \left(\frac{\|S\triangle S'\|}{\epsilon}\right)^I\right).
\]
\end{theorem}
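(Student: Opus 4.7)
The strategy is to apply Theorem \ref{theoremoperatorequation} on the Banach space $\BB = \H(V_\C)$, taking $L = L_{S,\delta}$ and $L' = L_{S',s}$ and treating $s = \delta + \eta$ as an unknown to be solved for. By (B2)--(B3), $L$ has positive right and left fixed points $g$ and $\mu$ with $\mu g > 0$, and $R = L - cg\mu$ has spectral radius strictly less than $1$. The hypothesis \eqref{RnDeltabound} will hold provided $\zetabound$ and $\epsilon$ are chosen small enough: the part of $\|\Delta\|$ coming from symbols $b \in S'$ of small modulus (the ``tail'' of $S'$) is controlled by $\zeta(\kappa) \leq \zetabound$, while the part coming from actual differences between $S$ and $S'$ is controlled by $\|S\triangle S'\| \leq \epsilon$. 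Setting $s = \delta_{S'}$ ensures $L'$ also has fixed points, so Theorem \ref{theoremoperatorequation} yields the scalar identity $\sum_{\Qpower \geq 0} \mu \Delta (Q\Delta)^\Qpower g = 0$, and the entire task becomes solving this equation for $\eta$.

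The next step is to expand this identity as a convergent double power series in $\eta$ and in the perturbation data. Writing $\Delta = (L'_{S',\delta+\eta} - L'_{S',\delta}) + (L'_{S',\delta} - L_{S,\delta})$, the first summand is analytic in $\eta$ with Taylor coefficients obtained by differentiating $|b|^{qs} e^{sv(b,x)}$ in $s$, producing factors of $(q\log|b| + v(b,x))^j$; the second summand splits as a signed sum over $S \triangle S'$. Substituting this decomposition into the operator equation and expanding each occurrence of $\Delta$, every resulting monomial is naturally indexed by a multiset $I$ of pairs $(i,j)$, where $i$ records which ``type'' of symbol-level perturbation contributed the factor and $j$ records the order of its $\eta$-expansion. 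The scalar quantity $\eta_I$ in \eqref{case1formula} is the product of these contributions over $(i,j) \in I$, and $c_I$ is the universal constant obtained by evaluating the corresponding operator word (with $Q$'s inserted between perturbation factors) against $\mu$ on the left and $g$ on the right.

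To identify the leading coefficient and justify the series inversion, I would check that the coefficient of $\eta$ alone (i.e. from a pure exponent change on the common symbols) equals $\mu D_1 g = -\w\chi < 0$ by formula (B4), where $D_1$ is the first $\eta$-derivative of $L'_{S',s}$ at $s = \delta$. This is the crucial nonvanishing linear term that permits implicit solution. The smallest genuine perturbation term comes from an ``atomic'' symbol-level change whose effect factors through a rank-one operator of the form $h\otimes\nu$ (with $\nu$ concentrated at $0$, since $u_b \to u_0 \equiv 0$ as $b \to 0$), giving
\[
\w\chi\,\eta = \mu h\nu g \cdot \eta_{\{(0,0)\}} + \text{higher order in } (\eta,\text{perturbation}),
\]
which identifies $c_{\{(0,0)\}} = (\mu h \nu g)/\w\chi$. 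Iterating the implicit-function theorem in formal power series and collecting monomials then gives the full expansion; the combinatorial constraint $\mj(I) \leq \#(I) - 1$ reflects the fact that each factor of $\eta$ on the right-hand side must arise from a $\Delta$-factor of order $j \geq 1$, which is always accompanied by at least one additional perturbation factor when the equation is iterated.

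The main technical obstacle will be establishing the quantitative bound $|c_I| \lesssim \zetabound^{-\#(I)} \epsilon^{-\Sigma(I)} \kappa^{-\mj(I)}$ with the correct dependence on each parameter. Three ingredients must be combined: (a) Cauchy estimates \eqref{cauchy} on a complex neighborhood of $\cl{\Lambda_S}$ whose effective width is controlled by $\kappa$ bound the Taylor coefficients in $\eta$ and produce the $\kappa^{-\mj(I)}$; (b) the spectral gap $\|R^n\| \leq C\gamma^n$ keeps the norm of any iterated $Q$-word uniformly bounded; and (c) each symbol-level perturbation factor is bounded either by a $\zeta(\kappa)$-type or a $\|S \triangle S'\|$-type quantity, yielding the $\zetabound^{-\#(I)}$ and $\epsilon^{-\Sigma(I)}$ dependence after the scalar $\eta_I$ is factored out. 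Granted these estimates, the final remainder bound follows by truncating the series at $\#(I) < K$ and $\Sigma(I) \leq I$ and summing the resulting geometric tails.
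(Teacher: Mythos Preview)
Your approach is essentially correct and parallels the paper's, though the paper organizes the argument in two stages rather than one. First it proves Theorem~\ref{maintheorem}, obtaining the scalar equation $\Xi = \sum_{I,j} c_{I,j}\,\eta_I\,\theta^j = 0$ with $c_{\0,0}=0$ and $c_{\0,1}=-\w\chi$; only then does it substitute the expansion $\eta_i = \sum_m \theta^m \eta_{i,m}(S')$ (your pairs $(i,m)$), rewrite the equation as a fixed-point problem $\theta = \T_{S'}(\theta)$, and show the iteration converges in a suitable weighted norm on series functions. Your direct route merges both stages, which is fine.

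Two points deserve adjustment. First, your decomposition $\Delta = (L_{S',\delta+\eta}-L_{S',\delta}) + (L_{S',\delta}-L_{S,\delta})$ makes the Taylor operators of the first summand depend on $S'$, so they are not universal objects from which $S'$-independent constants $c_I$ can be built. The paper instead takes $\alpha = L_{S,\delta+\theta}-L_{S,\delta}$ (sum over the \emph{fixed} set $S$) and $\beta = L_{S',\delta+\theta}-L_{S,\delta+\theta}$, so that all $S'$-dependence is isolated in the scalars $\eta_{i,m}$. Second, the factor $\kappa^{-\mj(I)}$ does not come from spatial Cauchy estimates on a neighborhood of $\cl{\Lambda_S}$: it comes from the elementary inequality $\tfrac{w^j}{j!} \leq \kappa^{-j} e^{\kappa w}$ applied with $w = -q\log|b| + O(1)$, which converts $\sum_{b} |b|^{q\delta}|\log|b||^j$ into $\kappa^{-j}\sum_b |b|^{q(\delta-\kappa)}$, finite by hypothesis. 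This is really a Cauchy estimate in the \emph{exponent} variable $s$ on a disk of radius $\kappa$, not in the spatial variable. Finally, the constraint $\mj(I)\leq\#(I)-1$ requires a short induction on the iteration depth of $\T_{S'}$ (each step either introduces a new multiset element or consumes more $\theta$-powers than it produces); your sketch gestures at this but should make it explicit.
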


This theorem includes Scenario 1 as shown by the following corollaries:

\begin{corollary*}
\label{corollarycase1a}
Let $(S_N)$ be an ascending sequence of sets converging to a strongly regular set $S$. Then \eqref{case1formula} holds for all $N$ sufficiently large.
\end{corollary*}
\begin{corollary*}
\label{corollarycase1b}
Let $(S_N)$ be a descending sequence of sets converging to a set $S$, and suppose that $P(S_M,\delta - \kappa) < +\infty$ for some $M\in\N$ and $\kappa > 0$. Then \eqref{case1formula} holds for all $N$ sufficiently large.
\end{corollary*}
\begin{proof}[Proof of both corollaries]
Let $S_* = S$ for the first corollary and $S_* = S_M$ for the second corollary. Either way, there exists $\kappa > 0$ such that $P(S_*,\delta - \kappa) < +\infty$. Then $\sum_{b\in S_*} |b|^{q(\delta - \kappa)} < \infty$, so $\zeta(\kappa) < \infty$, and by the dominated convergence theorem $\sum_{b\in S\triangle S_N} |b|^{q(\delta - \kappa)} \to 0$ as $N\to\infty$. Thus for all sufficiently large $N$, $\zeta(\kappa) \leq \zetabound$ and thus Theorem \ref{theoremcase1} applies.
\end{proof}

\begin{remark*}
Note that if $\delta = 0$ and $S_M$ is infinite for all $M$, then $P(S_M,\delta - \kappa) \geq P(S_M,0) = \log\#(S_M) = \infty$, so Corollary \ref{corollarycase1b} does not apply. More generally, if $\delta = 0$ then $\zeta(\kappa) \geq 1$ for all infinite $S'$ and $\kappa > 0$, so Theorem \ref{theoremcase1} cannot be used to compute $\delta_{S'}$ for such $S$.
\end{remark*}

Now, Theorem \ref{theoremcase1} cannot be used to prove Theorem \ref{theoremFgeqN} because the inequality $\zeta(S_N,\kappa) \leq \zetabound$ is not satisfied. Our next theorem will generalize Theorem \ref{theoremcase1}, in a way that includes the case of Theorem \ref{theoremFgeqN}.

}

\begin{notation*}
Fix $S,S' \subset U\butnot \{0\}$ and $\delta\in\R$ such that $\delta \geq \delta_S$, and let
\[
\theta = \theta(S') \df \delta_{S'} - \delta.
\]
For each $i\geq 0$, we let
\begin{equation}
\label{etaidef}
\eta_i = \eta_i(S') \df \sum_{b \in S'} |b|^{q(\delta + \theta)} b^i - \sum_{b \in S} |b|^{q(\delta + \theta)} b^i.
\end{equation}
Note that $\eta_0$ is positive when $S'\supset S$ and negative when $S'\subset S$, as long as $S' \neq S$. Next, we define
\[
\etaratio \df \sup_{b\in S\triangle S'} |b|,
\]
where $S \triangle S' \df (S\butnot S') \cup (S'\butnot S)$ is the symmetric difference of $S$ and $S'$.

Let $\MM(A)$ denote the set of all multisets on a set $A$, i.e. finitely supported functions from $A$ to $\N$. If $I\in\MM(A)$, then $I(i) = n$ is interpreted as meaning ``$i$ is an element of $I$ of multiplicity $n$''. We denote the empty multiset by $\0$, and for each $i\in A$, we denote the singleton multiset containing $i$ by $\{i\}$, so that $\{i\}(i) = 1$, and $\{i\}(i') = 0$ for $i' \neq i$. Note that this implies that e.g. $i\{j\}$ denotes the multiset containing $j$ with a multiplicity of $i$.

For $I\in\MM(\N)$, we write
\begin{align*}
\#(I) &\df \sum_i I(i), &
\Sigma(I) &\df \sum_i i I(i), &
\eta_I &\df \prod_i \eta_i^{I(i)},
\end{align*}
where the summations and product are taken over the finite set $\{i\in\N : I(i) > 0\}$.
Finally, let
\begin{align*}
h(x) &\df e^{\delta v(0,x)}, &
\nu f &\df f(0), &
L_1 &\df L_{S,\delta},
\end{align*}
\begin{align*}
\hspace{58pt} \cbar\; &\df 1/\sum_{m = 0}^\infty \nu L_1^m h, &
\xi = \xi(S') &\df \eta_0 - \cbar.
\hspace{36pt}
\end{align*}
\end{notation*}

\begin{remark*}
Since $\delta \geq \delta_S$, we have $P(S,\delta) \leq 0$, or equivalently $\rho(L_1) = e^{P(S,\delta)} \leq 1$, where as before $\rho$ denotes spectral radius. It follows that $\cbar = 0$ if and only if $P(S,\delta) = 0$. Since $\delta \geq \delta_S$, it follows that $\cbar = 0$ if and only if both (a) $\delta = \delta_S$ and (b) $S$ is regular.
\end{remark*}

\begin{theorem}
\label{maintheorem}
With notation as above, fix $S \subset U\butnot\{0\}$ and $\delta\in\R$ such that $\delta \geq \delta_S$ and $\delta > \Theta_S$. Then there exist $\epsilon > 0$ and explicitly computable constants $c_{I,j,k}$ with $c_{\0,0,0} = 0$ and $c_{\0,0,1} = 1$ such that for all regular $S'\subset U\butnot\{0\}$ satisfying $\etaratio,|\theta| \leq \epsilon$, 
we have 
\begin{equation}
\label{mainformula}
\Xi = 0,
\;\;\; \text{ where } \;\;\;\;\;\;
\Xi = \Xi(S') \df \sum_{I\in \MM(\N_{\geq 1})} \sum_{j = 0}^\infty \sum_{k = 0}^\infty c_{I,j,k} \, \eta_I \theta^j \xi^k.
\end{equation}
If $\cbar = 0$, then $\xi = \eta_0$ and thus the right half of \eqref{mainformula} can be rewritten as
\begin{equation}
\label{mainformula2}
\Xi = \sum_{I\in \MM(\N)} \sum_{j = 0}^\infty c_{I,j} \, \eta_I \theta^j
\end{equation}
where $c_{I,j} = c_{(I\given\N_{\geq 1}),j,I(0)}$. In this case we have $c_{\0,1} = c_{\0,1,0} = -\w\chi < 0$, where $\w\chi$ is as in \text{(B4)}, where $g,\mu$ are right and left fixed points of $L_1$ normalized so that
\begin{equation}
\label{normalization}
\mu h = \nu g = 1.
\end{equation}
Moreover,
\begin{equation}
\label{cijkbounds}
|c_{I,j,k}| \lesssim \epsilon^{-(\Sigma(I) + j + k)}.
\end{equation}
When $\cbar = 0$, this can be written as
\begin{equation}
\label{cijkboundsv2}
|c_{I,j}| \lesssim \epsilon^{-(\Sigma(I) + I(0) + j)}.
\end{equation}
\end{theorem}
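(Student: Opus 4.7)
The plan is to apply Theorem \ref{theoremoperatorequation} on $\BB = \H(V_\C)$ to the perturbed operator $L' \df L_{S',\delta+\theta}$, which has spectral radius $1$ by Bowen's formula (A5,B5) and the regularity of $S'$, together with a carefully built reference operator $L$. When $\cbar = 0$, i.e.\ $\delta = \delta_S$ and $S$ is regular, we take $L = L_1$, with right and left fixed points $g,\mu$ from (B2). When $\cbar > 0$, we instead use the rank-one modification $L \df L_1 + \cbar h\nu$; the identity $\cbar^{-1} = \sum_{m\geq 0}\nu L_1^m h$ is precisely what makes $g \df \cbar\sum_{m\geq 0}L_1^m h$ and $\mu \df \cbar\sum_{m\geq 0}\nu L_1^m$ right and left fixed points of $L$ normalized by $\nu g = \mu h = 1$, realizing \eqref{normalization}. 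In either case $\sigma(L) = \sigma(L_1)\cup\{1\}$ with $1$ simple and isolated, so $R = L - cg\mu$ is the corresponding spectral complement and $\rho(R) < 1$ --- by elementary rank-one spectral theory together with $P(S,\delta) < 0$ when $\cbar > 0$, and by the spectral gap (A3)/(B3) when $\cbar = 0$.

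Next, I would expand $\Delta = L' - L$ as a convergent power series in the parameters. Splitting
\[
L' - L = \bigl(L_{S',\delta+\theta} - L_{S,\delta+\theta}\bigr) + \bigl(L_{S,\delta+\theta} - L_{S,\delta}\bigr) - \cbar h\nu,
\]
the first summand Taylor-expands in $b$ around $b = 0$ as $\sum_{i\geq 0}\eta_i\psi_i(\cdot,\theta)$, where $\psi_i(x,\theta,f)$ is the order-$b^i$ coefficient in $e^{(\delta+\theta)v(b,x)}f(u_b(x))$ (analytic in $b$ and $\theta$ by our hypotheses on $u,v$); the second summand equals $\sum_{j\geq 1}(\theta^j/j!)\alpha_j$, where $\alpha_j f(x) \df \sum_{b\in S}|u_b'(x)|^\delta(\log|u_b'(x)|)^j f(u_b(x))$, so that $\alpha_1$ coincides with \eqref{alpha1def}. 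Crucially $\psi_0(x,0,f) = h(x)f(0)$, so the $(i,j) = (0,0)$ piece combines with the rank-one correction $-\cbar h\nu$ to give exactly $\xi h\nu$. Taylor-expanding each $\psi_i$ further in $\theta$ yields
\[
\Delta = \xi h\nu + \sum_{i+j\geq 1}\eta_i\theta^j T_{i,j} + \sum_{j\geq 1}\theta^j T^{(\theta)}_j
\]
for explicit bounded operators on $\BB$; for $\etaratio,|\theta|\leq \epsilon$ small, $\|\Delta\|$ is correspondingly small, so \eqref{RnDeltabound} is satisfied. Substituting into $\sum_{\Qpower\geq 0}\mu\Delta(Q\Delta)^\Qpower g = 0$ from \eqref{operatorequation} and reorganizing by the monomial $\eta_I\theta^j\xi^k$ yields \eqref{mainformula}: each $\Delta(Q\Delta)^\Qpower$ is a polynomial in $\Delta$, so every $c_{I,j,k}$ is a finite sum of quantities $\mu T_1 Q T_2 \cdots Q T_{\Qpower+1} g$ with $T_\ell\in\{h\nu, T_{i,j}, T^{(\theta)}_j\}$, which is the claimed explicit computability. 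The leading values are then immediate: the $\xi$-coefficient of $\mu\Delta g$ equals $\mu(h\nu)g = (\mu h)(\nu g) = 1$, so $c_{\0,0,1} = 1$; $\Delta$ has no $(\eta,\theta,\xi)$-constant term, hence neither does $\Xi$, forcing $c_{\0,0,0} = 0$; and in the case $\cbar = 0$ the $\theta^1$-coefficient of $\mu\Delta g$ equals $\mu\alpha_1 g = -\w\chi$ by (B4), giving $c_{\0,1} = -\w\chi$.

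The main obstacle is establishing the uniform bounds \eqref{cijkbounds}. The cleanest route is to view $\Xi$ as a holomorphic function on a polydisc in the variables $(\eta_1,\eta_2,\ldots,\theta,\xi)$: analyticity of $u, v$ makes each $\psi_i$ converge on a disc of radius $\Theta(1)$ in $b$; the Neumann series $Q = \sum_n R^n$ depends analytically on the entries of $\Delta$ by standard Banach-space perturbation theory provided $\sum_n\|R^n\|\cdot\|\Delta\| < 1$; and then the multivariable Cauchy inequalities give \eqref{cijkbounds}. The delicate point is that infinitely many parameters $(\eta_i)_{i\geq 1}$ appear simultaneously, so one must bound the $b$-Taylor tail of $\psi_i$ uniformly; this is accomplished using the precompactness of $\bigcup_{b\in U}u_b(V)$ in $V$ together with Cauchy's inequality \eqref{cauchy} applied to $v$ on the complex neighborhood $U_\C\times V_\C$, which together yield operator-norm estimates $\|T_{i,j}\|\lesssim r^{-i}$ for some fixed $r > 0$, enough to push the multivariable Cauchy estimates through.
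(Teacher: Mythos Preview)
Your approach mirrors the paper's: the same splitting of $\Delta$ into the $\theta$-expansion of $L_{S,\delta+\theta}-L_{S,\delta}$ and the $b$-Taylor expansion of $L_{S',\delta+\theta}-L_{S,\delta+\theta}$, the same identification of the $\xi h\nu$ term, and the same route to \eqref{cijkbounds} via Cauchy-type estimates (which the paper formalizes through a Banach space of ``series functions'', Definition~\ref{definitionseriesfunction}). Two steps, however, are genuine gaps.

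First, when $\cbar>0$ the spectral gap for $L=L_1+\cbar\, h\nu$ is not a consequence of ``elementary rank-one spectral theory'': one must rule out further spectrum of $L$ on the unit circle, and this uses the positivity $\nu L_1^m h\geq 0$ together with $\nu h>0$ in an essential way. The paper devotes a separate section to this (Proposition~\ref{propositioncbarspectralgap}), arguing via the generating function $A(z)=\cbar\sum_m(\nu L_1^m h)z^{m+1}$ that $A(z)=1$ with $|z|\leq 1$ forces $z=1$, and then extracting the quantitative decay $\|L^n-cg\mu\|\lesssim\gamma^n$; your assertion $\sigma(L)=\sigma(L_1)\cup\{1\}$ is neither obviously true nor sufficient. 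Second, and more seriously, you assert that $\|\Delta\|$ is small once $\etaratio,|\theta|\leq\epsilon$, but $\Delta$ contains the summand $\xi h\nu$ and nothing in the hypotheses bounds $|\xi|=|\eta_0-\cbar\,|$ directly. The paper obtains $|\xi|\lesssim\max(\etaratio,|\theta|)$ by a separate positivity argument: from the expansions of $\alpha,\beta$ one has $\|L'g-(g+\xi h)\|\lesssim\max(\etaratio,|\theta|)$, and since $\rho(L')=1$ while $g,h$ are uniformly positive on $V_\R$, comparing $L'g$ with $\big(1+\inf_{V_\R}(h/g)\,\xi\big)g$ forces $\xi$ to be small. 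Without this a priori bound neither \eqref{RnDeltabound} nor the convergence of the series \eqref{mainformula} is justified.
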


\begin{remark*}
If we assume $\delta \geq \delta_S$, then the hypothesis that $\delta > \Theta_S$ is satisfied if and only if either (a) $\delta_S > \Theta_S$ (i.e. $S$ is strongly regular) or (b) $\delta > \delta_S$.
\end{remark*}

\begin{remark*}
It is natural to let $\delta = \lim_{N\to \infty} \delta_{S_N}$, where $S_N \to S$ is a sequence such that this limit exists, from which $S'$ will be chosen. In this case, we automatically have $|\theta| \leq \epsilon$ for all $N$ sufficiently large, and $\delta \geq \delta_S$ automatically due to semicontinuity of Hausdorff dimension for CIFS limit sets \cite[Theorem 4.2.13]{MauldinUrbanski2}. Moreover, if $S = \emptyset$ but $S_N \neq \emptyset$, then $\delta \geq 0 > -\infty = \delta_S$, so the hypothesis $\delta > \Theta_S$ is satisfied despite the fact that $S = \emptyset$ is not strongly regular (and in fact is not regular at all).
\end{remark*}

\begin{corollary}
\label{corollaryanalytic}
Fix $S,\delta$ as in Theorem \ref{maintheorem}, and let $(S_N)$ be a sequence of sets. Suppose that for some $d\in\N$, there exist a sequence $(\tt_N)$ in $\C^d$ converging to $\0$, and functions $F_i,F_\ast \in \H(B)$ holomorphic on a fixed neighborhood $B$ of $(\0,0)\in \C^{d+1}$, such that for each $N$,
\[
\eta_i(S_N) = F_i(\tt_N,\theta_N), \;\;\;\;
\theta_N = \theta(S_N),  \;\;\;\;
\xi(S_N) = F_*(\tt_N,\theta_N).
\]
Furthermore, suppose that 
\begin{align*}
&\|F_i\| \leq \epsilon^i/2^i, & 
&\|\pi_2 \| \leq \epsilon/2, \;\text{and}\; &
&\|F_*\| \leq \epsilon/2, 
\end{align*}
where $\pi_2(\tt,\theta) = \theta$ is the projection onto the second coordinate, and $\epsilon$ is as in Theorem \ref{maintheorem}. Then
\[
\Xi(S_N) = F(\tt_N,\theta_N)
\]
where $F$ is a holomorphic function defined on $B$.
\end{corollary}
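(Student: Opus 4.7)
The plan is to define a candidate $F \in \H(B)$ by formally substituting the given holomorphic representations of $\eta_i$, $\theta$, and $\xi$ into the series \eqref{mainformula}:
\[
F(\tt, \theta) \df \sum_{I \in \MM(\N_{\geq 1})} \sum_{j = 0}^\infty \sum_{k = 0}^\infty c_{I,j,k} \prod_{i \geq 1} F_i(\tt,\theta)^{I(i)} \cdot \theta^j \cdot F_\ast(\tt,\theta)^k.
\]
Once this series is shown to converge absolutely and uniformly on $B$, the identity $\Xi(S_N) = F(\tt_N, \theta_N)$ follows tautologically from the definition of $\Xi$ in \eqref{mainformula} together with the hypothesized relations $\eta_i(S_N) = F_i(\tt_N, \theta_N)$, $\xi(S_N) = F_\ast(\tt_N, \theta_N)$, $\theta_N = \theta(S_N)$; holomorphicity of $F$ then drops out of the Weierstrass M-test, since each summand is a finite product of holomorphic functions.

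The key estimate combines the coefficient bound \eqref{cijkbounds} with the sup-norm hypotheses $\|F_i\| \leq (\epsilon/2)^i$, $\|\pi_2\| \leq \epsilon/2$, and $\|F_\ast\| \leq \epsilon/2$. I would bound the general term pointwise on $B$ by
\[
|c_{I,j,k}| \prod_i |F_i|^{I(i)} \cdot |\theta|^j \cdot |F_\ast|^k \;\lesssim\; \epsilon^{-(\Sigma(I) + j + k)} \prod_i (\epsilon/2)^{i I(i)} (\epsilon/2)^j (\epsilon/2)^k \;=\; 2^{-\Sigma(I) - j - k},
\]
using the identity $\sum_i i\,I(i) = \Sigma(I)$, so that the powers of $\epsilon$ cancel exactly. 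This precise cancellation is what forces the norm hypotheses of the corollary to be stated with factors of $(\epsilon/2)^i$ rather than some uniform bound, and it is the single delicate point of the argument.

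It then remains to sum the majorant. The geometric sums $\sum_{j,k \geq 0} 2^{-j-k}$ are trivially finite, and the multiset sum factors cleanly across coordinates $i \geq 1$:
\[
\sum_{I \in \MM(\N_{\geq 1})} 2^{-\Sigma(I)} \;=\; \prod_{i = 1}^\infty \sum_{n = 0}^\infty 2^{-in} \;=\; \prod_{i = 1}^\infty \frac{1}{1 - 2^{-i}},
\]
which is the classical partition generating function evaluated at $1/2$ and is a convergent infinite product. The Weierstrass M-test then yields $F \in \H(B)$, completing the proof. The only real work is the $\epsilon$-bookkeeping that produces the exact cancellation above; once that is in place, everything else is essentially mechanical.
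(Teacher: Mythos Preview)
Your proof is correct and matches the paper's approach essentially line for line: the same definition of $F$, the same use of \eqref{cijkbounds} against the hypotheses $\|F_i\|\le(\epsilon/2)^i$, $\|\pi_2\|\le\epsilon/2$, $\|F_*\|\le\epsilon/2$ to produce the majorant $2^{-(\Sigma(I)+j+k)}$, and the same factorization of the multiset sum as $\prod_{i\ge 1}(1-2^{-i})^{-1}$.
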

\begin{proof}
Define the function 
\[
F \;\df \sum_{I\in \MM(\N_{\geq 1})} \sum_{j = 0}^\infty \sum_{k = 0}^\infty c_{I,j,k} \left(\prod_{i\in I} F_i\right) \pi_2^j F_*^k 
\;=\; \sum_{I,j,k} c_{I,j,k} \left(\prod_{i\in I} F_i\right) \pi_2^j F_*^k
\]
Note that the bound \eqref{cijkbounds} guarantees that the above series converges absolutely, since 
\begin{align*}
\sum_{I,j,k} \left\| c_{I,j,k} \left(\prod_{i\in I} F_i\right) \pi_2^j F_*^k \right\|
&\underset{\eqref{cijkbounds}}{\lesssim} \sum_{I,j,k} \epsilon^{-(\Sigma(I) + j + k)} (\epsilon/2)^{\Sigma(I) + j + k}\\
&= \sum_{I,j,k} (1/2)^{\Sigma(I) + j + k}\\
&= \left(\prod_{i = 1}^\infty \sum_{\ell = 0}^\infty (1/2)^{i\ell}\right)
\left(\sum_{j = 0}^\infty (1/2)^j\right)
\left(\sum_{k = 0}^\infty (1/2)^j\right)\\
&= 4 \prod_{i = 1}^\infty (1 - 2^{-i})^{-1}
< \infty
\end{align*}
Therefore the series defining $F$ converges in $\H(B)$.
By the definition of $F$, we have 
\[
F(\tt_N,\theta_N) \underset{\eqref{mainformula}}{=} \Xi(S_N).
\] 
Indeed,
\begin{align*}
F(\tt_N,\theta_N) &= \sum_{I,j,k} c_{I,j,k} \left(\prod_{i\in I} F_i (\tt_N,\theta_N) \right)  \cdot \pi_2^j (\tt_N,\theta_N) \cdot F_*^k (\tt_N,\theta_N)\\
&= \sum_{I,j,k} c_{I,j,k} \left(\prod_{i\in I} \eta_i(S_N) \right) \theta(S_N)^j \xi(S_N)^k \underset{\eqref{mainformula}}{=} \Xi(S_N)
\qedhere
\end{align*}
\end{proof}

\section{Spectral gap in the case $\cbar > 0$}
\label{sectionspectralgap}

To prove Theorem \ref{maintheorem}, we need to apply Theorem \ref{theoremoperatorequation}; thus, given sets $S,S'$, we need to produce operators $L,L'$, satisfying the hypotheses of Theorem \ref{theoremoperatorequation} if $S'$ is a sufficiently close perturbation of $S$, i.e. one for which $\etaratio,|\theta| \leq \epsilon$ as in Theorem \ref{maintheorem}. When $\cbar = 0$ we can take $L = L_S$ and $L' = L_{S'}$, since we have $P(S,\delta) = 0$ and thus by (B2) of \6\ref{sectionPACIFS}, $L_S$ has right and left fixed points. However, if $\cbar > 0$ then $P(S,\delta) < 0$ and thus $L_1 = L_{S,\delta}$ has spectral radius $<1$ and has neither right nor left fixed points. In this section we prove that there is another operator with right and left fixed points, which will be suitable to plug in for $L$ in Theorem \ref{theoremoperatorequation}. Moreover we prove that this operator has a spectral gap, guaranteeing that the series $\sum_n \|R^n\|$ appearing in Theorem \ref{theoremoperatorequation} converges.

\begin{proposition}
\label{propositioncbarspectralgap}
Let $L_1$ be an operator on a Banach space $\BB$ such that $\rho(L_1) < 1$, where $\rho$ denotes spectral radius. Fix $h\in\BB$, $\nu\in\BB^*$ such that $\nu h > 0$, and $\nu L_1^m h \geq 0$ for all $m \geq 0$. Then if we let
\begin{align*}
Q_1 &\df \sum_{m = 0}^\infty L_1^m, &
\cbar &\;\df 1/\nu Q_1 h, &
L &\df L_1 + \cbar \, h \nu,
\end{align*}
\begin{align*}
g &\df Q_1 h, &
\mu &\df \nu Q_1, 
\end{align*}
then it follows that 
\[
L g = g 
\;\text{and}\; 
\mu L = \mu.
\]
Moreover, there exist $C < \infty$ and $\gamma < 1$ such that for all $n$,
\begin{equation}
\label{equationspectralgap}
\|L^n - c g \mu\| \leq C \gamma^n
\end{equation}
where $c \df 1/\mu g$. In particular, $L$ has a spectral gap in the sense of \cite{Rugh2}\Footnote{See Remark \ref{remarkspectralgap}.}.
\end{proposition}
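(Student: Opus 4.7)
The plan is to first verify the fixed-point equations by direct computation, and then establish the spectral gap via a resolvent analysis that exploits the rank-one structure of the perturbation $L - L_1 = \cbar h\nu$.

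First I would verify $Lg = g$ and $\mu L = \mu$ using the telescoping identity $L_1 Q_1 = Q_1 L_1 = Q_1 - I$, immediate from the Neumann series definition of $Q_1$, together with the scalar identity $\nu Q_1 h = 1/\cbar$. Explicitly, $L g = L_1 Q_1 h + \cbar h(\nu Q_1 h) = (Q_1 - I)h + h = g$, and symmetrically for $\mu L$. Note also that $\mu g = \nu Q_1^2 h = \sum_{m \geq 0} (m+1)\, \nu L_1^m h > 0$ (since $\nu h > 0$), so $c = 1/\mu g$ is well-defined and positive; setting $P \df c g \mu$ one then has $LP = PL = P$ and hence $(L - P)^n = L^n - P$ for all $n \geq 1$, reducing the estimate to bounding the spectral radius of $L - P$.

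For the spectral gap I would apply the Sherman--Morrison resolvent identity to the rank-one perturbation $L = L_1 + \cbar h\nu$: for any $z$ outside the spectrum of $L_1$,
\[
(z - L)^{-1} = (z - L_1)^{-1} + \frac{\cbar\,(z - L_1)^{-1} h \cdot \nu (z - L_1)^{-1}}{1 - \phi(z)}, \qquad \phi(z) \df \cbar\, \nu (z - L_1)^{-1} h .
\]
Thus outside the spectrum of $L_1$, the spectrum of $L$ is exactly the zero set of $\phi - 1$. For $|z| > \rho(L_1)$ the Neumann expansion yields $\phi(z) = \sum_{m \geq 0} p_m z^{-m-1}$ with $p_m \df \cbar\, \nu L_1^m h \geq 0$, $\sum_m p_m = 1$, and crucially $p_0 = \cbar\, \nu h > 0$. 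By the triangle inequality, $|\phi(z)| \leq \sum_m p_m |z|^{-m-1} \leq 1$ for $|z| \geq 1$; equality to the value $1$ would force every term with $p_m > 0$ to be positive real, and since $p_0 > 0$, this pins $z^{-1}$ to be positive real, hence $z = 1$.

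To finish, a direct residue computation using $\phi'(1) = -\cbar\, \mu g$ shows the simple pole of $(z - L)^{-1}$ at $z = 1$ has residue exactly $P$. Since $\phi - 1$ is holomorphic on $|z| > \rho(L_1)$ with $z = 1$ its unique zero on $|z| \geq 1$, a compactness argument produces at most finitely many other zeros in any closed annulus $\rho(L_1) + \eta \leq |z| \leq 1 + \eta$, all of modulus strictly less than $1$. Choosing $\gamma < 1$ exceeding $\rho(L_1)$ and the moduli of these additional zeros and writing
\[
L^n - P = \frac{1}{2\pi i} \oint_{|z| = \gamma} z^n (z - L)^{-1} \, dz,
\]
the standard contour estimate gives $\|L^n - P\| \leq C \gamma^n$. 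The main obstacle lies in the positivity-based step above: without the hypothesis $\nu h > 0$ (i.e.\ $p_0 > 0$) the phase-pinning argument breaks down and roots-of-unity phenomena could admit additional unit-modulus spectrum, which would destroy the spectral gap.
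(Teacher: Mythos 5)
Your proof is correct, and it takes a genuinely different route from the paper's. The paper works with the scalar sequences $a_{m+1} = \cbar\,\nu L_1^m h$ and $b_{n+1} = \cbar\,\nu L^n h$, observes that $(b_n)$ is the renewal sequence built from the convolutions of $(a_m)$, extends the generating function $B = 1/(1-A)$ meromorphically past the unit circle, extracts $|b_n - r| \lesssim \tau^n$ by Cauchy estimates, and then transfers this scalar decay to the operator norm through the explicit expansion $L^n = L_1^n + \sum_{i,j} \cbar\, L_1^i h\, b_{n-i-j-1}\, \nu L_1^j$, yielding $\|L^n - c g\mu\| \lesssim n^2\tau^n$ by a direct double-sum estimate. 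You instead work at the operator level from the start: the Sherman--Morrison identity identifies the spectrum of $L$ outside the disc of radius $\rho(L_1)$ with the zero set of $\phi - 1$, where $\phi(z) = \cbar\,\nu(z-L_1)^{-1}h$ is exactly the paper's $A(1/z)$, and the contour-deformation/residue argument replaces the paper's hands-on norm expansion. The key analytic input is identical in both arguments --- the positivity of the coefficients together with $\cbar\,\nu h > 0$ pins the unique solution of $\phi(z)=1$ with $|z|\ge 1$ at $z=1$, and the derivative computation $\phi'(1) = -\cbar\,\mu g = -A'(1) \ne 0$ makes the pole simple with residue $c g\mu$ --- so the two proofs are dual presentations of the same mechanism. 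Your version is more conceptual and shorter, directly exhibiting $c g\mu$ as the spectral projection at $1$, while the paper's is more elementary (no functional calculus or contour integration, only norm estimates). One small point of care in your write-up: when you choose $\gamma$, you must also ensure the circle $|z|=\gamma$ and the region between it and the large contour avoid \emph{all} zeros of $\phi-1$, so $\gamma$ should be taken larger than $\rho(L_1)+\eta$ as well as larger than the moduli of the finitely many zeros in the annulus $\rho(L_1)+\eta \le |z| \le 1+\eta$ other than $z=1$ (possible since both bounds are strictly less than $1$); as written, zeros of modulus in $(\gamma,\rho(L_1)+\eta)$ are not excluded. This is a trivial fix, not a gap.
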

\begin{proof}
We have
\[
L g = (L_1 + \cbar \, h \nu) Q_1 h = \sum_{m = 1}^\infty L_1^m h + \cbar \, h \nu Q_1 h = \sum_{m = 0}^\infty L_1^m h = g
\]
and similarly $\mu L = \mu$. Let $a_{m + 1} = \cbar \, \nu L_1^m h$ and $b_{n + 1} = \cbar \, \nu L^n h$ for $m,n\geq 0$, and let $a_0 = 0$ and $b_0 = 1$. Then since $L^n = (L_1 + \cbar \, h \nu)^n$ is the sum of all $n$-fold ordered products of $L_1$ and $\cbar \, h \nu$, we have
\begin{align*}
b_{n + 1} = \cbar \, \nu L^n h
&= \cbar \, \nu (L_1 + \cbar \, h \nu)^n h\\
&= \sum_{\kr = 1}^\infty \sum_{\substack{m_1,\ldots,m_\kr \\ \sum_i (m_i + 1) - 1 = n}} \cbar \, \nu L_1^{m_1} (\cbar \, h\nu) L_1^{m_2} \cdots L_1^{m_{\kr - 1}} (\cbar \, h \nu) L_1^{m_\kr} h\\
&= \sum_{\kr = 1}^\infty \sum_{\substack{m_1,\ldots,m_\kr \\ \sum_i m_i = n - \kr + 1}} \prod_{i = 1}^\kr \cbar \, \nu L_1^{m_i} h\\
&= \sum_{\kr = 1}^\infty \sum_{\substack{m_1,\ldots,m_\kr \\ \sum_i m_i = n + 1}} \prod_{i = 1}^\kr a_{m_i}.
\end{align*}
Together with the equality $b_0 = 1$, this shows that the sequence $(b_n)$ is the sum of the $\kr$-fold convolutions of the sequence $(a_m)$ over $\kr\in\N$.

Now let $A$ and $B$ be the functions whose Taylor series coefficients are given by $(a_m)$ and $(b_n)$, i.e.
\begin{align*}
A(z) &= \sum_{m = 0}^\infty a_m z^m, &
B(z) &= \sum_{n = 0}^\infty b_n z^n.
\end{align*}
Then the convolution relation between $(a_m)$ and $(b_n)$ mentioned above implies that
\[
B(z) = \sum_{\kr = 0}^\infty [A(z)]^\kr = \frac{1}{1 - A(z)}
\]
for all $z$ in the radius of convergence of both $A$ and $B$. Now by hypothesis we have $\lambda \df \rho(L_1) < 1$, and by definition of the spectral radius we have $|a_m| \lesssim_m (\lambda + \epsilon)^m$ for all $\epsilon > 0$. Thus the series defining $A$ converges in the open ball $B_\C^\circ(0,\lambda^{-1}) \supset B_\C(0,1)$. Moreover, by the definition of $\cbar$ we have $f(1) = \sum_m a_m = 1$, and by hypothesis we have $a_1 > 0$ and $a_m \geq 0$ for all $m$. We claim that for all $z\in B_\C(0,1)$, if $A(z) = 1$ then $z = 1$. Indeed, since $|a_m z^m| \leq a_m$ and $\sum_m a_m = 1$, if $A(z) = 1$ then we must have $a_m z^m = a_m$ for all $n$. In particular $a_1 z = a_1$, and since $a_1 > 0$ this implies $z = 1$.

Next, we observe that
\[
A'(1) = \sum_{m = 0}^\infty m a_m = \frac1r \df \frac{\cbar}{c} = \frac{\mu g}{\nu Q_1 h} > 0.
\]
It follows that $B$ can be extended to a meromorphic function $\what B = \frac{1}{1 - A}$ on $B_\C^\circ(0,\lambda^{-1})$, and the only pole of $\what B$ in $B_\C(0,1)$ is $1$, where $\what B$ has a simple pole of residue $-r$. So
\[
\what B(z) = \frac{r}{1 - z} + E(z)
\]
where $E$ is a meromorphic function on $B_\C^\circ(0,\lambda^{-1})$, which is holomorphic on a closed neighborhood of $B_\C(0,1)$, say $B_\C(0,\tau^{-1})$ with $\lambda < \tau < 1$. Since $\frac{r}{1 - z} = \sum_n r z^n$, it follows from Cauchy's inequality (cf. \eqref{cauchy}) that
\[
|b_n - r| \leq \tau^n \| E \|_\infty.
\]
Now
\begin{align*}
L^n = (L_1 + \cbar \, h \nu)^n &= L_1^n + \sum_{i = 0}^{n - 1} L_1^i (\cbar \, h \nu) L_1^{n - i - 1} + \sum_{i = 0}^{n - 2} \sum_{j = 0}^{n - i - 2} L_1^i (\cbar \, h \nu) L^{n - i - j - 2} (\cbar \, h \nu) L_1^j\\
&= L_1^n + \sum_{i = 0}^{n - 1} \sum_{j = 0}^{n - i - 1} \cbar \, L_1^i h b_{n - i - j - 1} \nu L_1^j
\end{align*}
and thus after setting $b_n = 0$ when $n < 0$ and $\|b\| = \sup_n |b_n|$, we have
\begin{align*}
\|L^n - c g \mu\| &= \|L^n - r \, \cbar \, g \mu\|\\
&= \left\|L_1^n + \sum_{i = 0}^\infty \sum_{j = 0}^\infty \cbar(b_{n - i - j - 1} - r) L_1^i h \nu L_1^j\right\|\\
&\leq \|L_1^n\| + \sum_{i = 0}^\infty \sum_{j = 0}^\infty \cbar \, \min\big(r + \|b\|,|b_{n - i - j - 1} - r|\big) \|L_1^i\| \cdot \|h \nu\| \cdot \|L_1^j\|\\
&\lesssim \tau^n + \sum_{i = 0}^\infty \sum_{j = 0}^\infty \min(1,\tau^{n - i - j}) \tau^{i + j}\\
&\leq \tau^n + \sum_{i = 0}^\infty \sum_{j = 0}^\infty \min(1,\tau^{i - n}) \min(1,\tau^{j - n}) \tau^n\\
&= \tau^n + \left(\sum_{i = -n}^\infty \min(1,\tau^i)\right)^2 \tau^n \asymp n^2 \tau^n.
\end{align*}
Thus for any choice of $\gamma$ in $(\tau,1)$, we have that \eqref{equationspectralgap} is satisfied. This completes the proof of Proposition \ref{propositioncbarspectralgap}.
\end{proof}

\section{Proof of Theorem \ref{maintheorem}}
\label{sectionproof}


As before, recall that $\BB$ denotes the Banach space of bounded holomorphic functions on $V_\C$, endowed with the sup norm. All operator norms will be taken with respect to $\BB$.

Let $S,\delta$ be as in Theorem \ref{maintheorem}. Since $\delta > \Theta_S$, there exists $\kappa > 0$ such that $\delta - \kappa > \Theta_S$. Fix $\epsilon > 0$ to be determined. Now fix $S'\subset U\butnot\{0\}$ such that $\etaratio,|\theta| \leq \epsilon$, let
\begin{align*}
L &\df L_1 + \cbar \, h\nu, &
L' &\df L_{S',\delta + \theta}, &
L'' &\df L_{S,\delta + \theta} + \cbar \, h\nu,\\
\alpha &\df L'' - L, & 
\beta &\df L' - L'', &
\Delta &\df L' - L = \alpha + \beta.
\end{align*}
Since $S'$ is regular and $\delta + \theta = \delta_{S'}$, we have $P(S',\delta + \theta) = 0$ and thus $L' = L_{S'}$ has positive right and left fixed points $g'$ and $\mu'$ by (B2) of \6\ref{sectionPACIFS}. Here we call a function \emph{positive} if it is uniformly positive on $V_\R$, and we call an element of $\BB^*$ positive if it arises from a nonzero nonnegative measure supported on $V_\R$. In particular, if $f \in \BB$ and $\sigma \in \BB^*$ are both positive then $\sigma f > 0$.

If $\cbar = 0$, then $P(S,\delta) = 0$ and thus $L = L_S$ has positive right and left fixed points $g$ and $\mu$ by (B2), and by (B3), there exist constants $C < \infty$ and $\gamma < 1$ such that $\|R^n\| = \|L^n - c g \mu\| \leq C \gamma^n$ for all $n$, where $R$ is as in Theorem \ref{theoremoperatorequation}. On the other hand, if $\cbar > 0$, then the existence of such $g,\mu,C,\gamma$ follows from Proposition \ref{propositioncbarspectralgap}. Either way, we get $\sum_{n=0}^\infty \|R^n\| \leq C/(1-\gamma) < \infty$, so if
\begin{equation}
\label{Deltasufficient}
\|\Delta\| < (1-\gamma)/C,
\end{equation}
then the hypotheses of Theorem \ref{theoremoperatorequation} are satisfied, and consequently \eqref{operatorequation} holds. We aim to show that \eqref{Deltasufficient} holds if $\epsilon$ is sufficiently small, while simultaneously developing the tools that will allow us to reduce \eqref{operatorequation} to the equation $\Xi = 0$, where $\Xi$ is as in \eqref{mainformula}.

\begin{remark*}
In the remainder of the paper we will assume that $g,\mu$ are normalized as in \eqref{normalization}.
\end{remark*}

In what follows, the implied constants of asymptotics may depend on $S$ but not on $S'$ or $\theta$.

\begin{lemma}
\label{lemmaetabound}
If $\epsilon$ is sufficiently small, then for all $i$,
\[
|\eta_i| \lesssim \etaratio^i,
\]
where $\eta_i$ is as in \eqref{etaidef}.
\end{lemma}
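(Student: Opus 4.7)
The proof plan is to exploit cancellation between the two sums defining $\eta_i$ so that only terms indexed by the symmetric difference $S\triangle S'$ survive, and then estimate these terms crudely.

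First I would observe that since the summands $|b|^{q(\delta+\theta)} b^i$ agree on the common index set $S \cap S'$, they cancel, leaving
\[
\eta_i \;=\; \sum_{b \in S' \setminus S} |b|^{q(\delta+\theta)} b^i \;-\; \sum_{b \in S \setminus S'} |b|^{q(\delta+\theta)} b^i.
\]
Since $S, S' \subset U \subset \R$, we have $|b^i| = |b|^i$, and for every $b \in S \triangle S'$ the definition of $\etaratio$ gives $|b| \leq \etaratio$, hence $|b|^i \leq \etaratio^i$. Pulling this out yields
\[
|\eta_i| \;\leq\; \etaratio^i \sum_{b \in S \triangle S'} |b|^{q(\delta+\theta)},
\]
so it suffices to bound the residual sum by a constant independent of $i$ and $S'$ (but possibly depending on $S$).

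Next I would split the residual sum as $\sum_{b\in S'\setminus S} + \sum_{b\in S\setminus S'}$ and enlarge each piece, bounding it by $\sum_{b\in S'} |b|^{q(\delta+\theta)} + \sum_{b\in S} |b|^{q(\delta+\theta)}$. For the $S'$-sum, regularity of $S'$ forces $P(S',\delta+\theta) = 0$, and then the partition function estimate \eqref{partitionfunction} yields $\sum_{b\in S'} |b|^{q(\delta+\theta)} \asymp 1$, with implied constants depending only on $(U,V,u,v,q)$. For the $S$-sum, I would use the standing hypothesis $\delta > \Theta_S$: choose $\epsilon$ small enough so that $\delta - \epsilon > \Theta_S$; then $s := \delta + \theta \in [\delta - \epsilon, \delta + \epsilon]$ satisfies $s > \Theta_S$, so $\sum_{b \in S} |b|^{qs_0} < \infty$ for $s_0 = \delta - \epsilon$. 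A uniform bound on $\sum_{b\in S} |b|^{qs}$ for $s$ in this compact range then follows by comparison: on the cofinite subset $\{b \in S : |b| \leq 1\}$ the summand is monotone in $s$, and the remaining finitely many terms (finite because $S$ accumulates only at $0$ and $U$ is bounded) are uniformly bounded on the compact range of $s$.

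Combining the two bounds gives $\sum_{b \in S \triangle S'} |b|^{q(\delta+\theta)} \lesssim 1$, and multiplying by $\etaratio^i$ completes the argument. I do not anticipate a real obstacle here: the estimate is essentially formal, and the only mild point to be careful about is the uniformity of the $S$-sum bound as $\theta$ varies over $[-\epsilon,\epsilon]$, which is handled by the choice $\delta - \epsilon > \Theta_S$.
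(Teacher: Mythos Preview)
Your proposal is correct and follows essentially the same route as the paper: cancel on $S\cap S'$, pull out $\etaratio^i$, and bound the remaining sum by $\sum_{b\in S}|b|^{q(\delta+\theta)}+\sum_{b\in S'}|b|^{q(\delta+\theta)}$, handling the $S'$-piece via $P(S',\delta+\theta)=0$ and the $S$-piece via $\delta-\epsilon>\Theta_S$. The only cosmetic difference is that the paper fixes $\kappa$ with $\delta-\kappa>\Theta_S$ at the outset and imposes $\epsilon\leq\kappa$, then bounds $\sum_{b\in S}|b|^{q(\delta+\theta)}\lesssim\sum_{b\in S}|b|^{q(\delta-\kappa)}\asymp e^{P(S,\delta-\kappa)}$ in one line, whereas you spell out the monotonicity-plus-finite-exceptions argument; both amount to the same thing.
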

\begin{proof}
If $\epsilon \leq \kappa$, then
\begin{align*}
|\eta_i| &\leq \sum_{b \in S\triangle S'} |b|^{q(\delta + \theta) + i} \leq \etaratio^i \sum_{b \in S\triangle S'} |b|^{q(\delta + \theta)}\\
&\leq \etaratio^i \left(\sum_{b\in S} |b|^{q(\delta + \theta)} + \sum_{b\in S'} |b|^{q(\delta + \theta)} \right)\\
&\lesssim \etaratio^i \left(\sum_{b\in S} |b|^{q(\delta - \kappa)} + \sum_{b\in S'} |b|^{q(\delta + \theta)} \right)\since{$|\theta| \leq \epsilon \leq \kappa$} \\
&\asymp \etaratio^i \left( e^{P(S,\delta - \kappa)} + e^{P(S',\delta + \theta)} \right) \by{\eqref{partitionfunction}} \\ 
&\asymp \etaratio^i
\end{align*}
where the last asymptotic is true since $P(S,\delta - \kappa) < +\infty$ and $P(S',\delta + \theta) = 0$, the former being true since we chose $\kappa$ such that $\delta - \kappa > \Theta_S$ and the latter since  $S'$ is regular and $\delta + \theta = \delta_{S'}$.
\end{proof}

\begin{lemma}
\label{lemmaalphaf}
If $\epsilon$ is sufficiently small, then
\begin{equation}
\label{alphaseries}
\alpha = \sum_{j\geq 1} \theta^j \alpha_j,
\end{equation}
where $\alpha_j \in \LL(\BB)$ is the unique operator such that
\begin{equation}
\label{alphajdef}
\alpha_j f(x) = \frac{1}{j!} \sum_{b\in S} |u_b'(x)|^\delta \log^j|u_b'(x)| \; f\circ u_b(x) \;\;\;\; \all f\in\BB \all x\in V_\R.
\end{equation}
The value of $\alpha_j f(x)$ for $x\in V_\C$ is obtained by replacing $|u_b'(x)|$ by $|b|^q e^{v(b,x)}$ in the above formula. Furthermore,
\begin{equation}
\label{alphabound}
\|\alpha_j\| \lesssim \kappa^{-j}.
\end{equation}
\end{lemma}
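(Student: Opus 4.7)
The plan is to recognize $\alpha$ as the purely geometric perturbation of $L_1$ (the $\cbar h\nu$ contributions in $L$ and $L''$ cancel out), then Taylor-expand the exponential difference in $\theta$, and finally control each Taylor coefficient $\alpha_j$ using the assumption $\delta-\kappa > \Theta_S$ together with an elementary inequality converting logarithmic factors into a small loss in the exponent.

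First, I would unwind the definitions: since $L = L_1 + \cbar h\nu$ and $L'' = L_{S,\delta+\theta} + \cbar h\nu$, we have
\[
\alpha f(x) = \sum_{b\in S}\bigl(|u_b'(x)|^{\delta+\theta} - |u_b'(x)|^{\delta}\bigr) f\circ u_b(x) = \sum_{b\in S} |u_b'(x)|^{\delta}\bigl(e^{\theta\log|u_b'(x)|} - 1\bigr) f\circ u_b(x).
\]
Expanding the exponential as $\sum_{j\ge1} \theta^j \log^j|u_b'(x)|/j!$ and (formally) swapping the sums in $j$ and $b$ gives \eqref{alphaseries} with $\alpha_j$ as in \eqref{alphajdef}. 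For the complex extension, since $\log|u_b'(x)| = q\log|b| + v(b,x)$ on $V_\R$ and the right-hand side extends holomorphically in $x\in V_\C$ through the holomorphic extension of $v$, the stated substitution rule makes $\alpha_j$ a well-defined operator on $\BB = \H(V_\C)$.

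The core estimate is \eqref{alphabound}. Fix $j\ge 1$ and $f\in\BB$ with $\|f\|_\infty \le 1$. For $x \in V_\C$, writing $t_b(x) \df q|\log|b|| + |v(b,x)| \geq -\mathrm{Re}(q\log|b| + v(b,x))$ (since the maps are contracting the logarithm is negative), the elementary inequality $t^j/j! \le \kappa^{-j} e^{\kappa t}$ for $t\ge 0$ gives
\[
\frac{|q\log|b| + v(b,x)|^j}{j!} \;\le\; \frac{t_b(x)^j}{j!} \;\le\; \kappa^{-j}\, e^{\kappa t_b(x)} \;\lesssim\; \kappa^{-j}\, |b|^{-q\kappa},
\]
where the last inequality uses the boundedness of $v$ on $U_\C\times V_\C$. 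Combining with $|b|^{q\delta} |e^{\delta v(b,x)}| \lesssim |b|^{q\delta}$, we obtain
\[
\|\alpha_j f\|_\infty \;\lesssim\; \kappa^{-j} \sum_{b\in S} |b|^{q(\delta-\kappa)}.
\]
Since $\delta - \kappa > \Theta_S$ by our choice of $\kappa$, we have $P(S,\delta-\kappa) < +\infty$, hence by \eqref{partitionfunction} the sum is finite and independent of $j$. This yields $\|\alpha_j\| \lesssim \kappa^{-j}$, proving \eqref{alphabound}.

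Finally, to justify \eqref{alphaseries}, note that once \eqref{alphabound} is established, $\sum_{j\ge 1}|\theta|^j\|\alpha_j\| \lesssim \sum_{j\ge 1}(\epsilon/\kappa)^j$ converges provided $\epsilon<\kappa$, so the series $\sum_{j\ge 1}\theta^j\alpha_j$ converges absolutely in $\LL(\BB)$. Combined with the same Fubini/dominated-convergence justification for the termwise Taylor expansion in each $b$ (using the same pointwise bound with $\kappa$ replaced by any $\kappa' \in (|\theta|,\kappa)$), this identifies the limit with $\alpha$. The main obstacle is just the bound \eqref{alphabound}; the substantive content is the trick $t^j/j! \le \kappa^{-j}e^{\kappa t}$, which precisely converts the logarithmic growth in $j$ into a summability requirement at parameter $\delta-\kappa$, and this is the only place where the strict inequality $\delta > \Theta_S$ is used.
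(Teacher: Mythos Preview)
Your proof is correct and follows essentially the same approach as the paper: Taylor-expand $|u_b'(x)|^\theta - 1$ in $\theta$, then bound each $\alpha_j$ using the elementary inequality $t^j/j! \le \kappa^{-j} e^{\kappa t}$ to convert the logarithmic factors into a shift of the exponent from $\delta$ to $\delta-\kappa$, and finally invoke $P(S,\delta-\kappa) < +\infty$. The only cosmetic difference is that the paper introduces $w(b) = -q\log|b| + C$ with $C = \|v\| + 2q(\log\|U\|)_+$ to handle the (possible) case $|b| > 1$ uniformly, whereas you absorb this into the implied constant via boundedness of $U$; both work.
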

Note that $\alpha_0 = L_1$, and that $\alpha_1$ is as defined in \eqref{alpha1def}, with $s = \delta$.
\begin{proof}
Indeed, fix $f\in \BB$. For $x\in V_\R$ we have
\begin{align*}
\alpha f(x) &= \sum_{b\in S} |u_b'(x)|^\delta \big(|u_b'(x)|^\theta - 1\big) \; f\circ u_b(x)\\
&= \sum_{j=1}^\infty \frac{\theta^j}{j!} \sum_{b\in S} |u_b'(x)|^\delta \log^j|u_b'(x)| \; f\circ u_b(x)
\note{see below to justify interchange}\\
&= \sum_{j=1}^\infty \theta^j \alpha_j f(x).
\end{align*}
The assertion about the value of $\alpha_j f(x)$ for $x\in V_\C$ can be obtained either by analytic continuation, or by repeating the above calculation with the suggested substitution.

To demonstrate \eqref{alphabound}, we note that for all $x\in V_\C$,
\begin{align*}
|\alpha_j f(x)|
&\leq \frac{1}{j!} \sum_{b\in S} |b|^{q\delta} e^{\delta \Re v(b,x)} \big|q\log|b| + v(b,x)\big|^j \cdot |f\circ u_b(x)|\\
&\leq  \frac{1}{j!} \sum_{b\in S} |b|^{q\delta} e^{\delta \|v\|}  (-q\log|b| + C)^j \|f\|,
\end{align*}
where $C = \|v\| + 2q(\log\|U\|)_+$. Here $\|U\| = \sup_{b\in U} |b|$, and $(\cdot)_+$ denotes the positive part.

Let $w(b) = -q\log|b| + C \geq 0$. Then
\begin{align*}
\|\alpha_j\| &\leq \frac1{j!} \sum_{b\in S} e^{-\delta w(b)} w^j(b)
= \kappa^{-j} \sum_{b\in S} e^{-\delta w(b)} \frac{(\kappa w(b))^j}{j!}\\
&\leq \kappa^{-j} \sum_{b\in S} e^{-(\delta - \kappa) w(b)} \asymp \kappa^{-j} e^{P(S,\delta - \kappa)} \lesssim \kappa^{-j}
\end{align*}
since $P(S,\delta - \kappa) < +\infty$. Note that this calculation also shows that the interchange of summation in the second equation of the first calculation is valid as long as $\epsilon < \kappa$ (so that $|\theta| < \kappa$).
\end{proof}

\begin{lemma}
\label{lemmabetaf}
Recall from Section \ref{sectiontheoremscase12} that $\xi = \eta_0 - \cbar$\,. If $\epsilon >0$ is sufficiently small, then
\begin{equation}
\label{betaseries}
\begin{split}
\beta &= \sum_{i,j} \eta_i \theta^j \beta_{i,j} - \cbar \, h\nu
= \xi h \nu + \sum_{(i,j) \neq (0,0)} \eta_i \theta^j \beta_{i,j}\\
&= \xi h \nu + \sum_{j = 1}^\infty (\cbar + \xi) \theta^j \beta_{0,j} + \sum_{i = 1}^\infty \sum_{j = 0}^\infty \eta_i \theta^j \beta_{i,j},
\end{split}
\end{equation}
where $\beta_{i,j} \in \LL(\BB)$ is defined by
\begin{equation}
\label{betaijdef}
\begin{split}
\beta_{i,j} f(x)
&\df \frac{1}{i!} \frac{1}{j!} \left(\frac{\del}{\del b}\right)^i \left(\frac{\del}{\del\theta}\right)^j \left[e^{(\delta + \theta) v(b,x)} \; f\circ u_b(x)\right]_{b = \theta = 0}\\
&= \Coeff\left(b^i \theta^j, e^{(\delta + \theta) v(b,x)} \; f\circ u_b(x)\right).
\end{split}
\end{equation}
Here $\Coeff(X,A)$ denotes the coefficient of a multinomial $X$ in the power series expansion of $A$. Furthermore,
\begin{equation}
\label{betabound}
\|\beta_{i,j}\| \leq \rho_U^{-i} e^{(\delta + 1) \|v\|}
\end{equation}
where $\rho_U > 0$ is small enough so that $B_\C(0,\rho_U) \subset U_\C$.
\end{lemma}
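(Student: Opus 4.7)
The strategy is to unpack the definition \eqref{complexPF} and reduce the identity to a double power-series expansion around $(b,\theta) = (0,0)$. Using the additivity of the geometric Perron--Frobenius operator over disjoint unions of index sets, I would first rewrite
\[
L_{S',\delta+\theta} - L_{S,\delta+\theta} = \sum_{b\in S'\butnot S} T_b - \sum_{b\in S\butnot S'} T_b,
\]
where $T_b f(x) = |b|^{q(\delta+\theta)} e^{(\delta+\theta)v(b,x)} f\circ u_b(x)$. This reduction is crucial because every $b$ appearing here satisfies $|b| \leq \etaratio \leq \epsilon$, so by choosing $\epsilon < \rho_U$ all such $b$ lie inside $B_\C(0,\rho_U)$, where the Taylor expansion of $F_b(\theta,x) = e^{(\delta+\theta)v(b,x)} f\circ u_b(x)$ in $(b,\theta)$ converges. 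By definition \eqref{betaijdef}, this expansion is $F_b(\theta,x) = \sum_{i,j\geq 0} \beta_{i,j}f(x)\, b^i \theta^j$. Multiplying by $|b|^{q(\delta+\theta)}$, summing with signs over $b$, and interchanging the summation over $b$ with that over $(i,j)$ yields
\[
L_{S',\delta+\theta}f - L_{S,\delta+\theta}f = \sum_{i,j}\eta_i\,\theta^j\,\beta_{i,j}f,
\]
since the bracketed factor produced by the interchange is exactly $\sum_{b\in S'}|b|^{q(\delta+\theta)}b^i - \sum_{b\in S}|b|^{q(\delta+\theta)}b^i = \eta_i$. Subtracting $\cbar h\nu$ gives the first equality of \eqref{betaseries}.

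The technical core is the bound \eqref{betabound}, which also justifies the interchange above. The function $F_b(\theta,x)$ is holomorphic in $(b,\theta)$ on the polydisk $B_\C(0,\rho_U) \times B_\C(0,1)$, and uniformly bounded there by $e^{(\delta+1)\|v\|}\|f\|$ (using $|\theta| \leq 1$ and $|\Re v| \leq \|v\|$). Applying Cauchy's inequality \eqref{cauchy} in two variables to extract the $(i,j)$-coefficient yields $|\beta_{i,j}f(x)| \leq \rho_U^{-i}\cdot 1^{-j}\cdot e^{(\delta+1)\|v\|}\|f\|$, which is exactly \eqref{betabound}. For absolute convergence of the interchanged double series, this combines with the estimate $\sum_{b\in S\triangle S'}|b|^{q(\delta+\theta)+i} \lesssim \etaratio^i$ (analogous to Lemma \ref{lemmaetabound}, using $|\theta|\leq\epsilon\leq\kappa$ and the finiteness of $P(S,\delta-\kappa)$ and $P(S',\delta+\theta)$); the product $\rho_U^{-i}\etaratio^i = (\etaratio/\rho_U)^i$ decays geometrically in $i$, and $\theta^j$ decays geometrically in $j$ once $\epsilon < 1$.

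Finally, the remaining two forms of \eqref{betaseries} are pure rearrangements. Evaluating \eqref{betaijdef} at $b=\theta=0$ and using the convention $u_0\equiv 0$ gives $\beta_{0,0}f(x) = e^{\delta v(0,x)} f(0) = h(x)\cdot\nu f$, so $\beta_{0,0} = h\nu$; separating this term from the first equality and invoking $\xi = \eta_0 - \cbar$ gives $(\eta_0 - \cbar) h\nu + \sum_{(i,j)\neq(0,0)} \eta_i\theta^j\beta_{i,j} = \xi h\nu + \sum_{(i,j)\neq(0,0)} \eta_i\theta^j\beta_{i,j}$, which is the second form. Splitting the remaining sum by $i=0$ (which forces $j\geq 1$) versus $i\geq 1$ and substituting $\eta_0 = \cbar + \xi$ in the former yields the third form. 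The main obstacle throughout is the careful convergence bookkeeping in the interchange of summations, which is exactly what the reduction to $S\triangle S'$ together with the Cauchy-inequality bound on $\beta_{i,j}$ takes care of.
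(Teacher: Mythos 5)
Your proof is correct and takes essentially the same approach as the paper: expand $e^{(\delta+\theta)v(b,x)}\,f\circ u_b(x)$ as a double power series in $(b,\theta)$ whose coefficients are $\beta_{i,j}f(x)$, interchange with the sum over $b$ to produce the $\eta_i$'s, and obtain \eqref{betabound} from Cauchy's inequality with radii $\rho_U$ and $1$, which also justifies the interchange once $\etaratio<\rho_U$ and $|\theta|<1$. Your explicit reduction to the symmetric difference $S\triangle S'$ at the outset is only an organizational refinement of the paper's argument, which achieves the same effect through the cancellation implicit in its interchange step.
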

Note that $\beta_{0,0} = h \nu$. This is in fact the reason that we defined $h,\nu$ as we did.
\begin{proof}
Indeed, fix $f\in\BB$. For all $x\in V_\C$ we have
\begin{align*}
(\beta + \cbar \, h \nu) f(x) &= (L_{S',\delta + \theta} - L_{S,\delta + \theta}) f(x)\\
&= \sum_{b\in S'} |b|^{q(\delta + \theta)} e^{(\delta + \theta) v(b,x)} \; f\circ u_b(x) - \sum_{b\in S} |b|^{q(\delta + \theta)} e^{(\delta + \theta) v(b,x)} \; f\circ u_b(x) \noreason \\
&= \sum_{b\in S'} |b|^{q(\delta + \theta)} \left(\sum_{i,j} b^i \theta^j \beta_{i,j} f(x)\right) - \sum_{b\in S} |b|^{q(\delta + \theta)} \left(\sum_{i,j} b^i \theta^j \beta_{i,j} f(x)\right) \noreason \\
&= \sum_{i,j} \eta_i \theta^j \beta_{i,j} f(x). \note{see below to justify interchange}
\end{align*}
The last two equalities of \eqref{betaseries} follow from the definition of $\xi$ and the fact that $\beta_{0,0} = h\nu$.

To demonstrate \eqref{betabound}, we plug in $\rho = \rho_U$ and $\rho = 1$ into \eqref{cauchy} (both with $z = 0$) for the variables $b$ and $\theta$, respectively. This yields
\begin{align*}
|\beta_{i,j} f(x)|
&\leq \rho_U^{-i} \sup_{b\in U_\C} \sup_{|\theta| \leq 1} \left| e^{(\delta + \theta) v(b,x)} \; f\circ u_b(x)\right|\\
&\leq \rho_U^{-i} e^{(\delta + 1)\|v\|} \|f\|.
\end{align*}
Note that by Lemma \ref{lemmaetabound}, this calculation also shows that the interchange of summation in the last equation of the first calculation is valid as long as $\epsilon < \min(\rho_U,1)$ (so that $\eta < \rho_U$ and $|\theta| < 1$).
\end{proof}

We now wish to prove a bound on $|\xi|$. By \eqref{alphabound}, we have $\|\alpha\| \lesssim |\theta|$, and by \eqref{betabound} and Lemma \ref{lemmaetabound}, we have $\|\beta - \xi h \nu\| \lesssim \max(\etaratio,|\theta|)$, as long as $\epsilon \leq \min(\kappa,\rho_U,1)/2$. Thus
\[
\|L' g - (g + \xi h)\| = \|(L' - L - \xi h \nu) g\| = \|(\alpha + \beta - \xi h \nu) g\| \lesssim \max(\etaratio,|\theta|).
\]
Suppose $\xi \geq 0$. Then since $h \geq \inf_{V_\R} (h/g) g$ on $V_\R$, it follows that 
\[
g + \xi h \geq \left(1 + \inf_{V_\R}(h/g)\, \xi \right)g.
\]
Now since $g$ is uniformly positive on $V_\R$, we have that 
\[
L' g \geq \lambda g \text{ on } V_\R \text{, where } \lambda = 1 + \inf_{V_\R}(h/g) \, \xi + O\big(\max(\etaratio,|\theta|)\big).
\]
Since $L'$ and $g$ are both positive, it follows that $\lambda \leq \rho(L') = 1$. Thus $\xi = O(\max(\etaratio,|\theta|))$, and the case $\xi \leq 0$ proceeds similarly. Let $C \geq 1$ be the implied constant, so that
\[
|\xi| \leq C \max(\etaratio,|\theta|) \leq C\epsilon.
\]

\begin{definition}
\label{definitionseriesfunction}
A \emph{series function} is an expression of the form 
\begin{equation}
\label{powerseries}
\ff = (f_{I,j,k}) = \sum_{I,j,k} \eta_I \theta^j \xi^k f_{I,j,k},
\end{equation}
where $f_{I,j,k} : V_\C \to \C$ are holomorphic functions independent of $\theta$ and $S'$, and $I,j,k$ are as in \eqref{mainformula}.
If $\ff$ is a series function, let 
\[
\opnorm{\ff} \df \sum_{I,j,k} (C\epsilon)^{\Sigma(I) + j + k} \|f_{I,j,k}\|.
\]
The set of series functions $\ff$ such that $\opnorm{\ff}$ is finite, which we denote as $\AA$, forms a Banach space under the norm $\opnorm{\cdot}$.
There is a natural projection map $\pi:\AA\to\BB$ defined by letting $\pi(\ff)$ be the value of the right-hand side of \eqref{powerseries}. Note that $\pi(\ff)$ depends on $S'$, while $\ff$ (being merely a formal expression) does not.
Since $\eta,|\theta|,|\xi| \leq C\epsilon$, we have 
\[
\| \pi(\ff) \| \leq \opnorm{\ff}
\] 
for every series function $\ff$. Finally, if $f\in\BB$ then we abuse notation and also let $f$ denote the series function $f = \eta_\0 \theta^0 \xi^0 f$.
\end{definition}

For $I\in \MM(\N_{\geq 1})$, let $\bfeta_I$, $\bftheta$, and $\bfxi$ denote the series functions given by the formulas 
\begin{align*} 
\bfeta_I &= \eta_I \theta^0 \xi^0 \one, & \bftheta &= \eta_\0 \theta^1 \xi^0 \one, & \bfxi &= \eta_\0 \theta^0 \xi^1 \one. 
\end{align*} 
Then by replacing $\eta_I$, $\theta$, and $\xi$ by $\bfeta_I$, $\bftheta$, and $\bfxi$ in \eqref{alphaseries} and \eqref{betaseries}, we can construct operators $\bfalpha$ and $\bfbeta$ on $\AA$ such that $\pi(\bfalpha f) = \alpha f$ and $\pi(\bfbeta f) = \beta f$ for all $f\in\BB$.


The corresponding operator norms satisfy
\begin{align*}
&\|\alpha\| \leq \opnorm{\bfalpha} \lesssim \epsilon/\kappa \asymp \epsilon &
&\text{and} &
&\|\beta\| \leq \opnorm{\bfbeta} \lesssim \epsilon/\rho_U \asymp \epsilon, 
\end{align*}
as long as $\epsilon \leq \min(\kappa,\rho_U,1)/2$. 
Recall that $\Delta = \alpha + \beta$, and let $\bfDelta \df \bfalpha + \bfbeta$. Then it follows from the above inequalities that 
\[
\|\Delta\| \leq \opnorm{\bfDelta} \lesssim \epsilon.
\] 
Thus if $\epsilon$ is sufficiently small then \eqref{Deltasufficient} holds, and thus so does \eqref{operatorequation}. Moreover, if $\opnorm{\bfDelta} < 1/\|Q\|$, then
\[
\sum_{\Qpower = 0}^\infty \opnorm[\big]{\bfDelta (Q\bfDelta)^\Qpower} \leq \opnorm{\bfDelta} \sum_{\Qpower = 0}^\infty (\|Q\|\cdot\opnorm{\bfDelta})^\Qpower < \infty,
\]
so $\sum_{\Qpower = 0}^\infty \bfDelta (Q\bfDelta)^\Qpower \in \LL(\AA)$. It follows that $\sum_{\Qpower = 0}^\infty \bfDelta (Q\bfDelta)^\Qpower g \in \AA$ and thus
\[
\sum_{\Qpower = 0}^\infty \mu \Delta (Q \Delta)^\Qpower g  = \Xi  \;\underset{\eqref{mainformula}}{\df}\; \sum_{I,j,k} c_{I,j,k} \, \eta_I \theta^j \xi^k 
\]
for some constants $c_{I,j,k}$, such that 
\[
|c_{I,j,k}| \lesssim (C\epsilon)^{-(\Sigma(I) + j + k)} \leq \epsilon^{-(\Sigma(I) + j + k)}.
\] 
This demonstrates \eqref{cijkbounds}, and we have $\Xi = 0$ by \eqref{operatorequation}.

Next we want to show that the coefficients of $1$ and $\xi$ are $c_{\0,0,0} = 0$ and $c_{\0,0,1} = 1$, respectively, and that if $\cbar = 0$ then the coefficient of $\theta$ is $c_{\0,1,0} = -\w\chi$, where $\w\chi > 0$ is as in \eqref{chidef}. Indeed, let
\[
X = \eta \; \vee \; \max(|\theta|,|\xi|)^2
\]
and recall that $A \equiv_X B$ means $B - A = O(X)$. Then by \eqref{normalization},
\begin{align*}
\Xi &\underset{X}{\equiv} \mu \Delta g = \mu \alpha g + \mu \beta g
\underset{X}{\equiv} \theta \mu \alpha_1 g + \xi \mu h \nu g + \cbar \, \theta \mu \beta_{0,1} g \\
&= \xi + (-\w\chi + \cbar \,\mu\beta_{0,1} g) \theta,
\end{align*}
which is what we wanted.


\ignore{
\section{Proof of Theorem \ref{theoremcase1}}
\label{sectioncase1}

We now prove Theorem \ref{theoremcase1} using Theorem \ref{maintheorem}. As before we fix $\epsilon,\thetabound,\zetabound > 0$ to be determined. Now let $S'$ satisfy $\zeta \df \zeta(\kappa) \leq \zetabound$ and $\epsilon \df \|S\triangle S'\| \leq \epsilon$. Then by Theorem \ref{maintheorem}, \eqref{mainformula} holds, and
\[
|c_{I,j,k}| \lesssim \thetabound^{-j} \xibound^{-k} \epsilon^{-\Sigma(I)}.
\]
Since $\cbar = 0$, we have $\xi = \eta_0$. We now wish to rewrite $\eta_i$ in terms of $\eta_{i,\mj}$ and $\theta$:

\begin{lemma}
\label{lemmaetaexpansion}
We have
\[
\eta_i = \sum_{\mj = 0}^\infty \theta^\mj \eta_{i,\mj}(S').
\]
Moreover,
\[
|\eta_{i,\mj}(S')| \lesssim \epsilon^i \kappa^{-\mj} \zeta.
\]
\end{lemma}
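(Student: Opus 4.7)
The plan is to Taylor-expand the factor $|b|^{q\theta}$ around $\theta = 0$ and then interchange summations to read off the coefficient of $\theta^\mj$. Specifically, for each $b$ one has
\[
|b|^{q(\delta + \theta)} \;=\; |b|^{q\delta}\, e^{q\theta \log|b|} \;=\; |b|^{q\delta} \sum_{\mj = 0}^\infty \frac{(q\log|b|)^\mj}{\mj!}\, \theta^\mj.
\]
Substituting into the definition of $\eta_i$, the terms with $b \in S \cap S'$ cancel, so after collecting powers of $\theta$ the natural candidate for the coefficient is
\[
\eta_{i,\mj}(S') \;\df\; \frac{q^\mj}{\mj!} \left(\sum_{b \in S' \setminus S} |b|^{q\delta} (\log|b|)^\mj\, b^i \;-\; \sum_{b \in S \setminus S'} |b|^{q\delta} (\log|b|)^\mj\, b^i\right),
\]
i.e.\ a signed sum over the symmetric difference $b \in S \triangle S'$.

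Next I would establish the claimed bound, which will simultaneously justify the interchange of summations. The key elementary inequality is that for $|b| \leq 1$ and any $\mj \geq 0$,
\[
\frac{(q|\log|b||)^\mj}{\mj!} \;=\; \kappa^{-\mj}\cdot \frac{(\kappa q|\log|b||)^\mj}{\mj!} \;\leq\; \kappa^{-\mj}\, e^{\kappa q|\log|b||} \;=\; \kappa^{-\mj}\, |b|^{-q\kappa},
\]
so that $|b|^{q\delta}\, |\log|b||^\mj/\mj! \lesssim \kappa^{-\mj}\, |b|^{q(\delta - \kappa)}$. Combining this with $|b|^i \leq \epsilon^i$ for $b \in S \triangle S'$ (which uses $\|S \triangle S'\| \leq \epsilon$, shrinking $\epsilon$ if necessary to ensure $|b| < 1$), one obtains
\[
|\eta_{i,\mj}(S')| \;\lesssim\; \epsilon^i\, \kappa^{-\mj} \sum_{b \in S \triangle S'} |b|^{q(\delta - \kappa)} \;\lesssim\; \epsilon^i\, \kappa^{-\mj}\, \zeta,
\]
by the definition of $\zeta = \zeta(\kappa)$.

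Finally, the interchange of $\sum_b$ and $\sum_\mj$ used in the opening step is justified a posteriori: the same estimate bounds the double sum of absolute values by $\epsilon^i\, \zeta \sum_\mj (|\theta|/\kappa)^\mj$, which converges whenever $|\theta| < \kappa$, a condition that is automatic once $\thetabound$ is chosen sufficiently small. There is no real obstacle in this lemma; it is essentially a careful bookkeeping of Taylor coefficients, with the only non-trivial input being the Taylor-remainder inequality $x^\mj/\mj! \leq e^x$ (applied to $x = \kappa q|\log|b||$) used to convert factorial decay into a pressure-type sum at the lower exponent $\delta - \kappa$, which is finite by the standing assumption that $\delta - \kappa > \Theta_S$.
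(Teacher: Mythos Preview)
Your proof is correct and follows essentially the same approach as the paper's: Taylor-expand $|b|^{q\theta}=e^{q\theta\log|b|}$, identify the coefficients $\eta_{i,\mj}(S')$ as signed sums over $S\triangle S'$, and bound them using the inequality $(q\kappa|\log|b||)^\mj/\mj!\le e^{q\kappa|\log|b||}\asymp |b|^{-q\kappa}$ together with $|b|^i\le\epsilon^i$ to reduce to the sum $\sum_{b\in S\triangle S'}|b|^{q(\delta-\kappa)}=\zeta$. Your version is in fact slightly more explicit than the paper's in justifying the interchange of summations and in handling the sign over the symmetric difference; the one inessential difference is that the finiteness of $\zeta$ here is a \emph{hypothesis} ($\zeta(\kappa)\le\zetabound$) rather than a consequence of $\delta-\kappa>\Theta_S$, which only controls the sum over $S$.
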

\begin{proof}
Indeed,
\[
\eta_i = \sum_{b\in S' - S} |b|^{q\delta} \sum_{j = 0}^\infty \frac{(q\theta\log|b|)^j}{j!} = \sum_{j = 0}^\infty \frac{q^j \theta^j}{j!} \sum_{b\in S' - S} |b|^{q\delta} \log^j|b| = \eta_i = \sum_{\mj = 0}^\infty \theta^\mj \eta_{i,\mj}(S')
\]
and
\begin{align*}
|\eta_{i,\mj}(S')| &\leq \frac{q^\mj}{\mj!} \sum_{b\in S \triangle S'} |b|^{q\delta + i} \big|\log |b| \big|^\mj\\
&= \kappa^{-\mj} \sum_{b\in S\triangle S'} |b|^{q\delta + i} \frac{(q\kappa|\log|b||)^\mj}{\mj!}\\
&\leq \epsilon^i \kappa^{-\mj} \sum_{b\in S\triangle S'} |b|^{q\delta} \exp(q\kappa|\log|b||)\\
&\asymp \epsilon^i \kappa^{-\mj} \sum_{b\in S\triangle S'} |b|^{q(\delta - \kappa)} \lesssim \epsilon^i \kappa^{-\mj} \zetabound.
\qedhere\end{align*}
\end{proof}

Now, analogously to the proof of Theorem \ref{maintheorem}, we define a \emph{series function of type 2} to be an expression of the form
\[
\sum_{I,\jprime} c_{I,\mj} \theta^\jprime \, \eta_I(S'),
\]
where $I$ ranges over multisubsets of $\N^2$, and we let
\[
\left\|\sum_{I,\jprime} c_{I,\jprime} \theta^\jprime \, \eta_I(S')\right\|_\sigma
\df \max_{I,\jprime} |c_{I,\jprime}| (2\epsilon)^{\Sigma(I)} (2\thetabound)^\jprime (2\zetabound)^{\#(I)}.
\]
By Lemma \ref{lemmaetaexpansion}, we can replace $\eta_i$ by $\sum_\mj \theta^\mj \eta_{i,\mj}$ in \eqref{mainformula} to get
\begin{equation}
\label{etaimformula}
\sum_{\substack{I,\jprime \\ \mj(I) \leq \jprime}} c_{I,\jprime} \theta^\jprime \, \eta_I(S') = 0,
\end{equation}
where the left-hand side has $\|\cdot\|_\sigma < \infty$. Moreover, we have $c_{\0,0} = c_{\0,0,0} = 0$, $c_{\0,1} = c_{\0,1,0} = -\w\chi$, and $c_{\{(0,0)\},0} = c_{\0,0,1} = 1$.

We claim next that \eqref{etaimformula} can be solved for $\theta$ yielding a formal power series
\begin{equation}
\label{thetaseries}
\theta = \sum_{\substack{I \\ \mj(I) \leq \#(I) - 1}} c_I \, \eta_I(S'),
\end{equation}
which we will then show converges. Indeed, writing \eqref{etaimformula} as
\begin{equation}
\label{solvefortheta}
\theta = \T_{S'}(\theta) \df -\frac{1}{\w\chi} \left[\sum_{\jprime = 2}^\infty c_{\0,\jprime} \theta^\jprime + \sum_{\jprime = 0}^\infty \sum_{\substack{I\neq \0 \\ \mj(I) \leq \jprime}} c_{I,\jprime} \theta^\jprime \, \eta_I(S')\right]
\end{equation}
yields a recursive formula for $\theta$. Taking the $n$th iterate yields
\begin{equation}
\label{iteraten}
\theta = \T_{S'}^n(0) + O(\theta^{n + 1})
\end{equation}
at least on the formal level. We claim that all nonzero terms $c_I \, \eta_I(S')$ on the right-hand side of \eqref{iteraten} satisfy $\mj(I) \leq \#(I) - 1$. Indeed, suppose that this is true for $n$, and we will prove it for $n+1$. Since $\sum_{\Qpower = 0}^\infty \Delta (Q\Delta)^\Qpower g$ is a series function, each term comes from an expression $c_{I,\jprime} \theta^\jprime \, \eta_I(S')$, where $\mj(I) \leq \jprime$, and either $\jprime\geq 2$ or $I\neq \0$. The induction hypothesis yields that if $c_{I'} \eta_{I'}(S')$ is a nonzero term in $\theta^\jprime$ (corresponding to a term $c_{I,\jprime} c_{I'} \eta_{I+I'}(S')$ on the right-hand side of \eqref{iteraten}), then $\mj(I') \leq \#(I') - \jprime$ and thus $\mj(I + I') \leq \#(I')$. If $\#(I) \geq 1$, then $\mj(I + I') \leq \#(I + I') - 1$ and we are done. Otherwise, $I = \0$ and thus $\mj(I) = 0$, so $\mj(I + I') = \mj(I') \leq \#(I') - \jprime < \#(I + I') - 1$. Thus, taking the limit of \eqref{iteraten} as $n\to\infty$ (with each coefficient eventually constant) yields \eqref{thetaseries}.

To prove that the series \eqref{thetaseries} in fact converges, fix $\gamma > 0$ and take $a,b\in\AA$ with $\|a\|_\sigma,\|b\|_\sigma \leq \thetabound$ and $\|b - a\|_\sigma \leq \gamma$. Then
\[
\|\T_{S'}(b) - \T_{S'}(a)\|_\sigma \leq \sum_{I,\jprime} |c_{I,\jprime}| \jprime\thetabound^{\jprime - 1} \gamma \epsilon^{\Sigma(I)} \xibound^{\#(I=0)} \lesssim \sum_\jprime \frac{\jprime\thetabound^{\jprime-1}}{\kappa^\jprime} \gamma ([\jprime\geq 2] + \zetabound) \lesssim (\thetabound + \zetabound) \gamma
\]
and thus if $\thetabound,\zetabound$ are small enough, we have $\|\T_{S'}(b) - \T_{S'}(a)\|_\sigma \leq (1/2) \|b - a\|_\sigma$. Moreover, $\|\T(0)\|_\sigma = \sum_{I\neq \0} |c_{I,0}| \cdot \epsilon^{\Sigma(I)} \zetabound^{\#(I)} \leq C \zetabound$ for some $C > 0$ and thus by induction, for all $n$ we have $\|\T^n(0)\|_\sigma \leq 2 C \zetabound(1 - 2^{-n}) < \thetabound$ and $\|\T^{n + 1}(0) - \T^n(\0)\|_\sigma \leq 2^{-n} C \epsilon$, as long as $\zetabound \leq \thetabound/2C$. So the limit of $\T^n(0)$ exists and has $\|\cdot\|_\sigma$-norm $\leq 2\zetabound$.

}


%

\section{Simplifications in special cases}
\label{sectionvanishing}

In some special cases, we can make some simplifications to \eqref{mainformula}, which will be used in Section \ref{sectiongauss} below.

\begin{proposition}
\label{propositionvanishing}
Let $S,\delta,S'$ be as in Theorem \ref{maintheorem}. Then \eqref{mainformula} can be simplified in the following ways:
\begin{itemize}
\item[(i)] If $S = \emptyset$, then (possibly after renormalizing $g$ and $\mu$) we have $\alpha = 0$, $L_1 = 0$, $Q_1 = I$, $g = h$, $\mu = \nu$, $c = \cbar = 1/\nu h$, $L = c g \mu$, $R = 0$, $Q = I$, and
\begin{equation}
\label{Semptyset}
\Xi = \sum_{\Qpower \geq 1} \nu \beta^\Qpower h.
\end{equation}
\item[(ii)] If $v(0,\cdot) = 0$, then $\beta_{i,j} = 0$ for all $i,j$ with $j > i$. If furthermore $v(\cdot,0) = 0$, then $\nu \beta_{i,j} = 0$ for all $i,j$ with $j > 0$. 
If both hypotheses hold, and in addition $S = \emptyset$, then $c_{I,j,k} = 0$ whenever $j \geq \Sigma(I)$ and $(I,j) \neq (\0,0)$. In particular, $c_{I,j,k} = 0$ for all $I,j,k$ with $j \geq \Sigma(I)$ and $k = 0$, and for all $I,j,k$ with $j > \Sigma(i)$. 
\item[(iii)] If $v(0,\cdot) = 0$ and $v(\cdot,0) = 0$, then $\nu \beta_{i,j} h = 0$ for all $(i,j) \neq (0,0)$. If furthermore $S = \emptyset$, then $c_{I,j,0} = 0$ for all $I,j$ such that $\#(I) < 2$. 
\item[(iv)] If $\delta = 0$, then $\beta_{i,0} g = \beta_{i,0} h = 0$ for all $i > 0$. This implies that $c_{I,0,k} = 0$ for all $I,k$ such that $I\neq \0$.
\end{itemize}
\end{proposition}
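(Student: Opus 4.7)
The plan is to verify each of the four parts directly, using only the explicit formulas for $L_1$, $\alpha_j$, and $\beta_{i,j}$ from Section~\ref{sectionproof} together with the expansion $\Xi = \sum_{p\geq 0}\mu\Delta(Q\Delta)^p g$. For part~(i), I would simply substitute $S = \emptyset$ into every definition: the empty sum forces $L_1 = 0$ and $\alpha_j = 0$ for all $j\geq 1$, so $\alpha = 0$ and $Q_1 = \sum_{m\geq 0}L_1^m = I$; then the choices $g = Q_1 h = h$ and $\mu = \nu Q_1 = \nu$ give $\cbar = 1/\nu h = 1/\mu g = c$, so $L = \cbar h\nu = c g\mu$, $R = L - c g\mu = 0$, $Q = I$. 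Since $\Delta = \alpha + \beta = \beta$, the formula for $\Xi$ collapses to $\sum_{p\geq 0}\nu\beta^{p+1}h = \sum_{p\geq 1}\nu\beta^p h$, which is \eqref{Semptyset}.

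For part~(ii), I would first rewrite $\beta_{i,j}f(x) = \tfrac{1}{i!\,j!}\partial_b^i\bigl[v(b,x)^j e^{\delta v(b,x)} f\circ u_b(x)\bigr]\big|_{b=0}$ by taking the $\theta$-derivatives first (using $\partial_\theta^j e^{(\delta+\theta)v} = v^j e^{(\delta+\theta)v}$). If $v(0,\cdot) = 0$, then $v(b,x) = O(b)$ near $b = 0$, so the bracket has vanishing order $\geq j$ in $b$ at the origin, forcing $\beta_{i,j} = 0$ for $i < j$. If additionally $v(\cdot,0) = 0$, setting $x = 0$ makes $v(b,0)^j \equiv 0$ for $j > 0$, forcing $\nu\beta_{i,j} = 0$ for $j > 0$. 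For the concluding implication (with $S = \emptyset$), I would use part~(i) and expand $\beta = \xi h\nu + \sum_{(i,j)\neq(0,0)}\eta_i\theta^j\beta_{i,j}$ inside $\nu\beta^p h$; the $\xi h\nu$ factors split each term into blocks $\nu\beta_{i_1,j_1}\cdots\beta_{i_m,j_m}h$, and on each surviving block with $m\geq 1$ the vanishings just proved force $j_1 = 0$ (hence $i_1\geq 1$, since $(i_1,j_1) \neq (0,0)$) and $j_\ell \leq i_\ell$ for all $\ell$, giving that block's contribution to $j$ an upper bound of $\Sigma(I_{\text{block}}) - 1$. Summing over blocks, any term with at least one nontrivial block satisfies $j < \Sigma(I)$; the only other contribution is the all-$\xi h\nu$ term, which gives $(I,j) = (\0,0)$.

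Part~(iii) is then a short corollary of the same calculation: under both hypotheses $h = e^{\delta v(0,\cdot)} = \one$, so $\nu\beta_{i,j}h = \beta_{i,j}\one(0) = \Coeff\bigl(b^i\theta^j, e^{(\delta+\theta)v(b,0)}\bigr) = [i = 0]\cdot[j = 0]$, since $v(\cdot,0) = 0$ makes the exponent vanish. The implication then follows from \eqref{Semptyset}: a $k = 0$ contribution uses no $\xi h\nu$ factors, so $\#(I)$ equals the number $p$ of $\beta_{i,j}$ factors; the case $\#(I) = 0$ demands $p = 0$ (outside the range of summation), and $\#(I) = 1$ yields a single factor $\nu\beta_{i,j}h = 0$ since $(i,j)\neq(0,0)$.

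For part~(iv), the key observation is that the hypotheses of Theorem~\ref{maintheorem} combined with $\delta = 0$ force $S$ to be finite with $\#S \leq 1$ (since $\Theta_S < 0$ forces $S$ finite and then $P(S,0) = \log \#S \leq 0$); in either admissible subcase, $g$ can be taken to be $\one$ (directly from part~(i) when $S = \emptyset$, and from $Lg = g$ combined with the uniform contraction of $u_{b_0}$ when $\#S = 1$), and $h = e^{0\cdot v(0,\cdot)} = \one$ directly. Thus $f\circ u_b(x)$ is constant in $b$ for $f\in\{g, h\}$, and the bracket formula immediately gives $\beta_{i,0}f = 0$ for $i > 0$. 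For the implication I would observe that $\alpha = O(\theta)$ so $\Delta|_{j=0} = \beta|_{j=0} = \xi h\nu + \sum_{i>0}\eta_i\beta_{i,0}$; applying this to $g = h = \one$ and using $\nu g = 1$ together with $\beta_{i,0}g = 0$ yields $\Delta|_{j=0}g = \xi h$. Because $Lh = h$ and $g = h$ jointly give $Rh = 0$ and hence $Qh = h$, an easy induction yields $(Q\Delta|_{j=0})^p g = \xi^p g$, so $\Xi|_{j=0} = \sum_{p\geq 1}\xi^p$ involves the $\eta$'s only through $I = \0$. The only substantive step in the entire argument is the constancy of $g$ in part~(iv); everything else is routine bookkeeping on the expansions already set up in Section~\ref{sectionproof}.
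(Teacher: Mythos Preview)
Your argument is correct and follows essentially the same route as the paper's proof: parts (i)--(iii) match almost verbatim (your block decomposition in (ii) is just an unrolled form of the paper's induction on $p$), and in (iv) both proofs hinge on the identity $g=h=\one$ together with $Q\one=\one$. The only genuine organizational difference is how you obtain $g=\one$: you first observe that $\delta=0$ forces $\#S\le 1$ and then argue by contraction in the one--point case, whereas the paper notes directly that $L\one=(\#S+\cbar)\one$ and concludes $\#S+\cbar=1$, $g=\one$ via the left eigenmeasure $\mu$ and simplicity of the top eigenvalue --- both routes are short and equivalent, and your inductive computation $(Q\Delta|_{j=0})^p g=\xi^p g$ is, if anything, a slightly cleaner packaging of the paper's product-by-product vanishing.
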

Since (i) is a straightforward consequence of the definitions, we proceed to the proofs of (ii)-(iv).
\begin{proof}[Proof of (ii)]
If $v(0,\cdot) = 0$, then we can write $v(b,x) = b w(b,x)$ for all $b,x$, for some holomorphic function $w$. Thus for all $i,j,f,x$ with $j > i$,
\[
\beta_{i,j} f(x) = \frac1{j!}\Coeff\left(b^i,e^{\delta v(b,x)} b^j w^j(b,x) f \circ u_b(x)\right) = 0.
\]
If furthermore $v(\cdot,0) = 0$, then for $i,j,f$ with $j > 0$,
\[
\nu \beta_{i,j} f = \Coeff\big(b^i\theta^j,e^{(\delta + \theta) v(b,0)} f(u_b(0))\big) = \Coeff\big(b^i\theta^j, f(u_b(0))\big) = 0.
\]
If $v(0,\cdot) = 0$, $v(\cdot,0) = 0$, and $S = \emptyset$, then applying (i) gives us 
\[
\Delta = \beta = \xi h \nu + \sum_{i = 1}^\infty \sum_{j = 0}^i \eta_i \theta^j \beta_{i,j},
\]
and
\[
\mu \Delta = \nu \beta = \xi \nu h \nu + \sum_{i = 1}^\infty \eta_i \nu\beta_{i,0} .
\]
One proves by induction that for every $p \in \N$
\[
\nu \beta^p = \sum_{k=0}^\infty c_k \xi^k \nu + \sum_{I\in\MM(\N_{\geq 1})} \sum_{j=0}^{\#(I)-1} \sum_{k=0}^\infty \eta_I \theta^j \xi^k \sigma_{i,j,k}
\]
Then multiplying both sides of the above equation on the right by $h$ 
and using \eqref{Semptyset} leads to 
\[
c_{I,j,k} = 0
\]
whenever $j \geq \Sigma(I)$ and $(I,j) \neq (\0,0)$.
\end{proof}
\begin{proof}[Proof of (iii)]
If $v(0,\cdot) = 0$, then $h = \one$. If furthermore $v(\cdot,0) = 0$, then for all $i,j$
\begin{equation*}
\nu \beta_{i,j} h = \Coeff\left(b^i \theta^j,e^{(\delta + \theta) v(b,0)} \one(u_b(0))\right) = \Coeff(b^i \theta^j,1) = \big[i = j = 0\big].
\end{equation*}
Here we use the Iverson bracket notation: $[\Phi] = 1$ when $\Phi$ is true and $[\Phi] = 0$ when $\Phi$ is false. By \eqref{betaseries}, it follows that $\nu \beta h = \xi$.

If furthermore $S = \emptyset$, then combining with part (i) gives
\[
\Xi = \xi + \sum_{\Qpower = 2}^\infty \nu \beta^\Qpower h,
\]
while by part (ii), all terms $\theta^i\cbar \beta_{i,0}$ appearing in $\beta$ vanish, and thus $\beta$ is of the sum of terms with factors $\xi$ and $\eta_i$. It follows that every term $c_{I,j,k} \, \eta_I \theta^j \xi^k$ appearing in the above series satisfies $\#(I) + k \geq 2$. In particular, if $\#(I) < 2$ then $c_{I,j,0} = 0$.
\end{proof}
\begin{proof}[Proof of (iv)]
If $\delta = 0$ then $L_1 \one = \#(S) \one$ and $h = \one$, and thus $L \one = (\#(S) + \cbar\,)\one$. It follows that $\#(S) + \cbar = 1$ and (after renormalizing) $g = \one$. Now for all $i$,
\[
\beta_{i,0} \one(x) = \Coeff\left(b^i,e^{(0 + 0) v(b,x)} \one \circ u_b(x)\right) = \Coeff(b^i,1) = \big[i = 0\big].
\]
Now, the coefficient $c_{I,0,k}$ is the sum of all products of the form $\mu \beta_{i_1,0} Q \cdots Q \beta_{i_\Qpower,0} g$ such that $k = \#(\ell : i_\ell = 0)$ and $I(i) = \#(\ell : i_\ell = i)$ for all $i$. For each such product, either $i_1 = \ldots = i_\Qpower = 0$, in which case $I = \0$, or there exists $\ell = 1,\ldots,\Qpower$ such that $i_\ell > 0$, and either $\beta_{i_\ell,0} Q \beta_{0,0}$ or $\beta_{i_\ell,0} g$ is a factor of the term in question. But since $Q \beta_{0,0} = Q h \nu = \one \nu$ and $g = \one$, both of these factors vanish, and thus $c_{I,0,k} = 0$.
\end{proof}

The next two propositions are proving slightly different things. Proposition \ref{propositionrationalv2} has the advantage that it applies to perturbations of a similarity IFS with more than one element. Proposition \ref{propositionrational} has the advantage that it applies to a larger class of non-Gauss PACIFSes, such as those defined over the system $(U,V,u,v,q)$ where $U = V = (-1/3,1/3)$, $u(b,x) = \frac{b}{1 + x}$, $v(b,x) = -2\log(1 + x)$, and $q = 1$.

In what follows, recall that $\Q[x]$ denotes the ring of polynomials in the variable $x$ with coefficients in $\Q$.

\begin{proposition}
\label{propositionrationalv2}
Let $S,\delta,S'$ be as in Theorem \ref{maintheorem}, and suppose that $S$ is finite, $(u_b)_{b\in S}$ consists entirely of similarities, $v(0,\cdot) = 0$, and $\Coeff(b^i, u_b(x)) \in \Q[x]$ for all $i$. Let
\[
R \df \Q(\lambda_b,\lambda_b^\delta)[\log(\lambda_b),c_b,\delta]
\]
where $\lambda_b$ is the contraction ratio of $u_b$, $c_b = u_b(0)$, and the extensions are taken over all $b\in S$. Then $c_{I,j,k} \in R$ for all $I,j,k$. In particular, if $S = \emptyset$, then $c_{I,j,k} \in \Q[\delta]$ for all $I,j,k$.
\end{proposition}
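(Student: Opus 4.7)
The plan is to identify a subring of "nice" functions preserved by every operator that appears in the expansion $\sum_{\Qpower} \mu \Delta (Q\Delta)^\Qpower g$, so that each coefficient $c_{I,j,k}$, being a finite sum of products $\mu O_1 Q O_2 Q \cdots Q O_m g$ of such operators, automatically lies in the scalar part $R$. The natural candidate is the polynomial ring $R[x]$, filtered by the finite-dimensional subspaces $V_n$ of polynomials of degree at most $n$ with coefficients in $R$. The proof consists of showing that each building block preserves this structure with matrix entries in $R$, and the conclusion then follows by assembly.

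First I would establish that the Taylor expansion of $v(b,x)$ in $b$ has coefficients in $\Q[x]$. By hypothesis $u_b(x) = \sum_{i \geq q} p_i(x) b^i$ with $p_i \in \Q[x]$; the assumption $v(0,\cdot) = 0$ translates via \eqref{vdef} into $|p_q'(x)| \equiv 1$, and since $p_q' \in \Q[x]$ this forces $p_q(x) = \pm x + K$ for some $K \in \Q$. Then $\pm b^{-q} u_b'(x) = 1 + O(b)$ has unit constant term in $\Q[x][[b]]$, and the rational expansion $\log(1 + y) = y - y^2/2 + \cdots$ shows $v(b,x) \in b\,\Q[x][[b]]$. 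This is the one delicate point of the proof: without the hypothesis $v(0,\cdot) = 0$, taking a logarithm would generate the transcendental quantity $\log p_q'(x)$ and destroy rationality.

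Given this, the building-block operators all preserve $R[x]$. The similarity operator $\alpha_j f(x) = \tfrac{1}{j!}\sum_{b \in S} \lambda_b^\delta (\log \lambda_b)^j f(\lambda_b x + c_b)$ does so directly, since $\lambda_b^\delta, \log \lambda_b, \lambda_b, c_b \in R$. The operator $h\nu f = f(0)\,\one$ (with $h = \one$ coming from $v(0,\cdot) = 0$) sends $R[x]$ into $V_0$. For $\beta_{i,j}f(x) = \Coeff(b^i \theta^j, e^{(\delta+\theta) v(b,x)} f(u_b(x)))$, the exponential factor has $b^i\theta^j$-coefficients in $\Q[\delta][x]$ by the previous step, and $f(u_b(x))$ for $f \in R[x]$ has $b$-coefficients in $R[x]$ by the hypothesis on $u_b$, so the convolution gives an element of $R[x]$.

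For the operator $Q$, I would use the degree filtration. In the basis $\{1, x, \ldots, x^n\}$, the operator $L_1|V_n$ is upper triangular with diagonal entries $\sum_{b \in S} \lambda_b^{\delta+k}$; the operators $L, h\nu, cg\mu$ similarly have $R$-valued matrix entries on $V_n$, so the same holds for the operator $L - cg\mu$ of Theorem \ref{theoremoperatorequation}. The matrix $I - (L - cg\mu)|V_n$ turns out to be upper triangular with diagonal $(1, 1 - \sum_b \lambda_b^{\delta+1}, \ldots, 1 - \sum_b \lambda_b^{\delta+n})$, every entry of which is a unit in the subfield $\Q(\lambda_b, \lambda_b^\delta) \subset R$, so $Q|V_n = (I - (L - cg\mu)|V_n)^{-1}$ has entries in $R$. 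Putting everything together: in any finite product $\mu O_1 Q O_2 \cdots Q O_m g$ appearing in the expansion of $c_{I,j,k}$, we start at $g \in V_0 \subset R[x]$, stay in $R[x]$ at every stage (with degree bounded by some finite $N$ depending only on the word), and finally apply $\mu$, which sends $V_N$ into $R$. Each such product therefore lies in $R$, and since $c_{I,j,k}$ is a finite sum of at most $\Sigma(I)+j+k+1$ such products, we conclude $c_{I,j,k} \in R$. The case $S = \emptyset$ is automatic: the ring $R$ then collapses to $\Q[\delta]$.
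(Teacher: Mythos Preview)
Your approach is essentially the same as the paper's: both identify $R[x]$, filtered by degree, as the space preserved by all building-block operators, use the upper-triangularity of $L$ on this filtration together with the Moran--Hutchinson inequality $|\sum_b (\pm)^k\lambda_b^{\delta+k}| < 1$ to invert $I-R$ over $R$, and conclude by assembling the products $\mu O_1 Q \cdots Q O_m g$. One point you glossed over that the paper treats explicitly: when you write that ``$cg\mu$ similarly has $R$-valued matrix entries on $V_n$'' and later that ``$\mu$ sends $V_N$ into $R$'', you are asserting $\mu(x^k)\in R$ without proof; the paper obtains this first, from the recursion $\mu f_k = a_k \mu f_k + \sum_{k'<k} a_{k'}^{(k)}\mu f_{k'}$ coming from $\mu L = \mu$, before turning to $Q$. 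This is easily patched in your framework (the same triangularity argument works), but the order matters since your treatment of $Q$ presupposes it.
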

\begin{proof}
We claim that $\alpha_j,\beta_{i,j},Q$ preserve $R[x]$, and that $\mu$ sends $R[x]$ to $R$. Indeed, if $f_k(x) = x^k$, then
\[
\alpha_j f_k(x) = \frac1{j!} \sum_{b\in S} \lambda_b^\delta (\pm\lambda_b x + c_b)^k \log^j(\lambda_b) \in R[x]
\]
and in particular $L f_k(x) = \alpha_0 f_k(x)$ is a polynomial of degree $k$ with leading coefficient $a_k = \sum_{b\in S} \pm\lambda_b^{k + \delta} \in \Q[\lambda_b,\lambda_b^\delta]$, which by the Moran--Hutchinson equation (e.g. \cite{Hutchinson}) satisfies $|a_k| < 1$ when $k > 0$. Since $\mu L = \mu$, we have
\[
\mu f_k = \mu L f_k = a_k \mu f_k + \sum_{k' < k} a_{k'}^{(k)} \mu f_{k'}
\]
which, together with the equality $\mu \one = 1$ (cf. \eqref{normalization}), yields a recursive formula for $\mu f_k$ proving that $\mu f_k \in R$, and thus $\mu f \in R$ for all $f\in R[x]$. Similarly, $Q = Q L + I - c g \mu$ and thus since $g = \one = f_0$ and $c = 1$,\Footnote{It is easy to see that $L \one = \one$, so the normalization \eqref{normalization} guarantees $g = \one$.}
\[
Q f_k = a_k Q f_k + \sum_{k' < k} a_{k'}^{(k)} Q f_{k'} + f_k - f_0 \mu f_k
\]
whicy yields a recursive formula for $Q f_k$ proving that $Q f_k(x) \in R[x]$, and thus that $Q$ preserves $R[x]$. Now since $v(0,\cdot) = 0$,
\[
\beta_{i,j} f_k(x) = \sum_{i' = 0}^i \frac1{i'!} \Coeff\big(\theta^j,(\delta + \theta)^{i'}\big) \Coeff\big(b^i,b^{i'} w^{i'}(b,x) u_b^k(x)\big)
\]
where $v(b,x) = b w(b,x)$ as above. Since $\Coeff(b^i, u_b(x)) \in \Q[x]$, we have $\Coeff(b^i,u_b'(x)) \in \Q[x]$ and thus $\Coeff(b^i,e^{v(b,x)}) \in \Q[x]$ for all $i$. Since $e^{v(0,x)} = 1$, the Taylor expansion of $\log(x)$ around $x = 1$ shows that  $\Coeff(b^i,v(b,x)) \in \Q[x]$ for all $i$. Thus $\beta_{i,j}$ preserves $R[x]$, which completes the proof.
\end{proof}

\begin{definition}
Let $R$ be a subring of $\R$, e.g. $\Q$. An analytic function $f:U\to \R$, where $U$ is a neighborhood of $\0$ in $\R^d$, is \emph{$R$-analytic} if the coefficients of the Taylor expansion of $f$ at $\0$ are all in $R$.
\end{definition}

Note that $R$-analyticity is highly sensitive to the point that the Taylor series is expanded around; if $f$ is $R$-analytic then $x\mapsto f(x - a)$ may not be $R$-analytic even if $a\in R^d$.

\begin{proposition}
\label{propositionrational}
Let $S,\delta,S'$ be as in Theorem \ref{maintheorem}, and suppose that $S = \emptyset$, that $v(0,0) = 0$, and that $\Coeff(b^i x^k , u_b(x)) \in \Q$ for all $i,k$. Then $c_{I,j,k} \in \Q[\delta]$ for all $I,j,k$.
\end{proposition}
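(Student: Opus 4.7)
The plan is to identify an algebraic subspace of the Banach space $\BB = \H(V_\C)$ that is preserved by all the operators appearing in the expansion of $\Xi$, and whose elements yield scalars in $\Q[\delta]$ under $\nu$. Specifically, let $\mathcal{H}$ denote the set of $f \in \BB$ whose Taylor coefficients at $0 \in V_\C$ all lie in $\Q[\delta]$. Then $\nu(\mathcal{H}) \subseteq \Q[\delta]$ trivially, and it suffices to show that $h \in \mathcal{H}$ and that each $\beta_{i,j}$ preserves $\mathcal{H}$, since by Proposition \ref{propositionvanishing}(i) we have $Q = I$, $g = h$, $\mu = \nu$, and
\[
\Xi = \sum_{\Qpower \geq 1} \nu \beta^\Qpower h,
\]
with $\beta = \xi h\nu + \sum_{(i,j)\neq (0,0)} \eta_i \theta^j \beta_{i,j}$ (note that $\cbar = 1/\nu h = 1$ because $v(0,0)=0$ forces $\nu h = e^{\delta v(0,0)} = 1$).

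The key preliminary step is to establish that the analytic function $v(b,x)$ has Taylor coefficients at $(0,0)$ in $\Q$, and that $v(0,0) = 0$. Writing $u_b(x) = b^q \tilde u(b,x)$ using the fact that $u_0\equiv 0$, the hypothesis that $\Coeff(b^i x^k, u_b(x)) \in \Q$ forces the Taylor coefficients of $\tilde u$ to be rational, hence so are those of $\partial_x \tilde u$. Up to a fixed sign, this is precisely the holomorphic extension of $W(b,x) := e^{v(b,x)}$, so $W$ has rational Taylor coefficients. Since $W(0,0) = e^{v(0,0)} = 1$, expanding $v(b,x) = \log W(b,x)$ via the Taylor series of $\log(1 + y)$ shows that $v(b,x)$ also has rational Taylor coefficients with no constant term.

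Granted this, I would verify the two claims about $\mathcal{H}$ as follows. For the claim $h \in \mathcal{H}$: expand $h(x) = e^{\delta v(0,x)} = \sum_{n\geq 0} \delta^n v(0,x)^n/n!$, and note that since $v(0,x)$ vanishes at $0$ with rational coefficients in $x$, only $n \leq k$ contributes to $\Coeff(x^k, h)$, giving a polynomial in $\delta$ with rational coefficients. For the claim that $\beta_{i,j}$ preserves $\mathcal{H}$: factor $\Coeff(\theta^j, e^{(\delta+\theta)v(b,x)}) = W(b,x)^\delta v(b,x)^j/j!$, and use the same trick again — since $\log W(b,x) = v(b,x)$ vanishes at $(0,0)$ with rational coefficients, $\Coeff(b^i x^k, W^\delta)$ involves only finitely many powers of $\delta$ and is thus in $\Q[\delta]$. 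Similarly, for $f \in \mathcal{H}$, $\Coeff(b^i x^k, f\circ u_b(x))$ involves only the finitely many $\ell$ with $q\ell \leq i$ (because $u_b$ vanishes to order $q$ at $b=0$) and each contributes a $\Q[\delta]$-valued term. The convolution of these two $\Q[\delta]$-valued power series, followed by extracting $\Coeff(b^i \theta^j x^k,\cdot)$, remains in $\Q[\delta]$.

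Finally, iterating: $\beta h$ is a formal series in $\eta_i, \theta, \xi$ with $\mathcal{H}$-valued coefficients, since $\xi h \nu h = \xi h$ (as $\nu h = 1$) and each $\beta_{i,j} h \in \mathcal{H}$. Induction shows that $\beta^\Qpower h$ is likewise such a formal series, so $\nu \beta^\Qpower h$ has coefficients in $\Q[\delta]$, and therefore so does $\Xi$. I expect the main delicate point to be step one: pinning down precisely why $W(b,x)$ has rational Taylor coefficients from the hypothesis on $u_b(x)$ (dealing cleanly with the modulus-versus-holomorphic-extension issue and the factor of $b^q$), after which the algebraic propagation through the operators is a bookkeeping exercise parallel to that in Proposition \ref{propositionrationalv2}.
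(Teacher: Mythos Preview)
Your proposal is correct and follows essentially the same argument as the paper. Both proofs use Proposition~\ref{propositionvanishing}(i) to reduce to showing that $\nu\beta_{i_1,j_1}\cdots\beta_{i_p,j_p} h \in \Q[\delta]$, and both establish the key lemma that $v$ is $\Q$-analytic at $(0,0)$ by writing $e^{v(b,x)} = \pm b^{-q} u_b'(x)$ and taking logarithms. The only difference is packaging: the paper factors $\beta_{i,j}$ explicitly as a finite sum of rank-one operators $M_{\psi_{i',k}} f_{i-i',j}\,\sigma_k$ (with $\sigma_k$ the ``extract the $k$th Taylor coefficient'' functional) and then checks that each resulting scalar $\sigma_i M_{\psi_{j,k}} f_{\ell,m}$ lies in $\Q[\delta]$, whereas you phrase the same computation as ``$\beta_{i,j}$ preserves the subspace $\mathcal H$ of $\Q[\delta]$-analytic functions.'' Your invariant-subspace formulation is arguably cleaner, while the paper's rank-one decomposition makes the finiteness at each step slightly more explicit; substantively they are the same proof.
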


\begin{proof}
By \vanishinga, $\Xi = \sum_{\Qpower\geq 1} \nu \beta^\Qpower h$. Thus, all coefficients $c_{I,j,k}$ in \eqref{mainformula} can be written as linear combinations of expressions of the form $\nu \beta_{i_1,j_1}\cdots \beta_{i_\Qpower,j_\Qpower} h$, where $\Qpower \geq 1$. In turn, we can write
\[
\beta_{i,j} = \sum_{i' = 0}^i \sum_{k = 0}^{i'} M_{\psi_{i',k}} f_{i - i',j} \sigma_k, 
\]
where
\begin{align*}
f_{i',j}(x) &= \Coeff\left( b^{i'} \theta^j , e^{(\delta + \theta) v(b,x)}\right),\\
\sigma_k f &= \Coeff\big(x^k, f(x)\big),\\
\psi_{i',k}(x) &= \Coeff\big(b^{i'}, u_b^k(x)\big)
\end{align*}
Thus, all coefficients can be written as linear combinations of products of expressions of the form $\sigma_i M_{\psi_{j,k}} f_{\ell,m}$. Here we use the fact that $\sigma_0 = \nu$ and $f_{0,0} = h$.

To show that $\sigma_i M_{\psi_{j,k}} f_{\ell,m} \in \Q[\delta]$, first note that since $u$ is $\Q$-analytic and $v(0,0) = 0$, it follows that $(b,x) \mapsto e^{v(b,x)} = \pm b^{-q} u_b'(x)$ is $\Q$-analytic and sends $(0,0)$ to $1$; thus $v$ is $\Q$-analytic. Again using the fact that $v(0,0) = 0$, it follows that $(b,\theta,x) \mapsto e^{(\delta + \theta) v(b,x)}$ is $\Q[\delta]$-analytic. This shows that $f_{\ell,m}$ and $\psi_{j,k}$, and thus $M_{\psi_{j,k}} f_{\ell,m}$, are $\Q[\delta]$-analytic, so $\sigma_i M_{\psi_{j,k}} f_{\ell,m}$, i.e. the $i$th coefficient of $M_{\psi_{j,k}} f_{\ell,m}$, is in $\Q[\delta]$.
\end{proof}

The next proposition is not strictly necessary for our purposes, but shows how our formula is a generalization of the Moran--Hutchinson equation.

\begin{proposition}
\label{propositionhutchinson}
If $(u_b)_{b\in U}$ consists entirely of similarities, then \eqref{mainformula} reduces to the Moran--Hutchinson equation $e^{P'} = 1$, where $P' = P(S',s)$. Specifically,
\[
\Xi = \frac{e^{P'} - 1}{2 - e^{P'}} \cdot
\]
\end{proposition}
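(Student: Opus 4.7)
My plan is to exploit the observation that in the similarity case the constant function $\one$ is a simultaneous right eigenvector of $L_1$, $L$, and $L'$, which collapses the infinite series $\Xi = \sum_{p\geq 0}\mu\Delta(Q\Delta)^p g$ produced in the proof of Theorem \ref{maintheorem} into an elementary geometric series in the single variable $\rho' := e^{P'}$. Since $|u_b'(x)|$ is independent of $x$ whenever $u_b$ is a similarity, and $v$ is analytic on $U\times V$, the function $v(b,x)$ depends only on $b$; under the natural parameterization of $(U,V,u,v,q)$ in which $v(0,\cdot)=0$ (always achievable for a pure similarity system by an elementary rescaling of the parameter $b$), we have $h=\one$, and the constancy argument below applies.

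Write $r_b = |u_b'|$, $\rho_1 = e^{P(S,\delta)}$, and $\rho' = e^{P'}$. The first step is to compute $L_1\one = \sum_{b\in S} r_b^\delta\one = \rho_1\one$, from which $Q_1\one = \one/(1-\rho_1)$ and hence $\cbar = 1/\nu Q_1 h = 1-\rho_1$, yielding
\[
L\one \;=\; L_1\one + \cbar\, h\nu\one \;=\; \rho_1\one + (1-\rho_1)\one \;=\; \one.
\]
Thus $g=\one$ is a right fixed point of $L$ with the normalization $\nu g=1$ automatic, and the corresponding left fixed point $\mu$, normalized by $\mu h=1$, satisfies $\mu\one = 1$, so $c = 1/\mu g = 1$. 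Then $R\one = L\one - \one\cdot(\mu\one) = 0$, so $Q\one = \sum_{n\geq 0} R^n\one = \one$. The same constancy gives $L'\one = \sum_{b\in S'} r_b^{\delta+\theta}\one = \rho'\one$, so $\Delta\one = (\rho'-1)\one$.

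A one-line induction then produces $(Q\Delta)^p\one = (\rho'-1)^p\one$, and $\mu\Delta\one = (\rho'-1)\mu\one = \rho'-1$. Summing the resulting geometric series yields
\[
\Xi \;=\; \sum_{p=0}^\infty \mu\Delta(Q\Delta)^p g \;=\; \sum_{p=0}^\infty (\rho'-1)^{p+1} \;=\; \frac{\rho'-1}{2-\rho'} \;=\; \frac{e^{P'}-1}{2-e^{P'}},
\]
where convergence is guaranteed by the smallness hypotheses on $S'$ in Theorem \ref{maintheorem}, which imply $|\rho'-1|<1$. Setting $\Xi=0$ then reduces precisely to $e^{P'}=1$, the Moran--Hutchinson equation.

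There is essentially no genuine obstacle in this argument; the only point of care is the normalization convention $v(0,\cdot)=0$. Without it, one instead obtains $\mu\one = 1/c_h$ and $c=c_h$ where $c_h = e^{\delta v(0,0)}$, and the geometric sum produces the rescaled formula $\Xi = (e^{P'}-1)/\bigl(c_h(2-e^{P'})\bigr)$, which still vanishes exactly when $e^{P'}=1$. Hence the Moran--Hutchinson reduction is independent of the parameterization, and the clean closed form stated in the proposition corresponds to the standard normalization.
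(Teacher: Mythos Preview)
Your argument is correct and is essentially the same as the paper's proof: both exploit that in the similarity case the constant function $\one$ is a simultaneous eigenvector of all the operators $L_1$, $L$, $L'$, $R$, $Q$, $\Delta$, collapsing the series $\sum_{p\geq 0}\mu\Delta(Q\Delta)^p g$ to a geometric series in $e^{P'}-1$. The paper packages this via the ring homomorphism $A\mapsto[A]$ with $A\one=[A]\one$, while you compute each eigenvalue by hand; you also spell out the computation of $\cbar=1-\rho_1$ and the normalization $h=\one$, which the paper leaves implicit.
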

\begin{proof}
All the operators $A = L,Q,R,\alpha_j,\beta_{i,j}$ satisfy $A\one = [A]\one$ for some $[A]\in\R$, and the map $A\mapsto [A]$ is a ring homomorphism. Similarly, if we write $[\sigma] = \sigma \one$ and $[r \one] = r$, then we get
\[
\Xi = \sum_{\Qpower = 0}^\infty [\nu] [\Delta] ([Q] [\Delta])^\Qpower [g] = \frac{[\Delta]}{1 - [Q] [\Delta]},
\]
since \eqref{normalization} implies $[g] = [\nu] = 1$. Moreover, since $L g = g$, we have $[L] = 1$ and thus $[R] = 0$, $[Q] = 1$. So $[\Delta] = [L'] - 1 = e^{P'} - 1$, which completes the proof.
\end{proof}

\section{Solving \eqref{mainformula}: a motivating computation}
\label{sectionlogloglog}


Although the proof of Theorem \ref{maintheorem} shows that \eqref{operatorequation} can be converted into the power series equation \eqref{mainformula}, there remains the question of how to solve this equation for $\theta$, particularly since $\eta_i$ and $\xi$ both depend on $\theta$. In some cases this is relatively easy, but in some cases more tools are needed. In this section we develop a tool that will help us solve for $\theta$ in the more difficult cases.

We start by considering a special case consisting of similarities, so that we can use the simpler Moran--Hutchinson equation in place of \eqref{mainformula}. Namely, for each $\lambda,B > 0$ wth $\lambda + B \leq 1$, we can consider a similarity IFS on $\R$ consisting of two elements with contraction ratios $\lambda$ and $B$, with distinct fixed points. For $B$ small, this system is a perturbation of the system consisting of ony one similarity contraction of contraction ratio $\lambda < 1$. On the other hand, the dimension $\theta$ of the limit set of this IFS is given by the Moran--Hutchinson equation:
\[
\lambda^\theta + B^\theta = 1.
\]
We want to analyze the behavior of $\theta$ when $\lambda$ is fixed and $B \to 0$. To this end, we note that
\begin{equation}
\label{Btheta}
B^\theta = 1 - \lambda^\theta = \theta f(\theta)
\end{equation}
for an analytic function $f$ depending on $\lambda$ such that $f(0) = \log(1/\lambda) > 0$. Taking logarithms yields
\[
-C\theta = \log(\theta) + \log f(\theta),
\]
where here and in the rest of this section we use the notation
\begin{equation}
\label{CDE}
\begin{split}
C &\df -\log(B) = \log(1/B),\\
D &\df \log(C) = \log\log(1/B),\\
E &\df \log(D) = \log\log\log(1/B).
\end{split}
\end{equation}
We now follow the heuristic of changing variables in such a way so that the new variable is ``closer to being bounded from above and below'' than the previous variable. Thus, let $\gamma \df C\theta > 0$. Then
\[
\gamma = -\log(\gamma/C) - \log f(\theta) = D - \log(\gamma) - \log f(\theta).
\]
Let $\beta \df D - \gamma$. Then
\[
\beta = \log(D - \beta) + \log f(\theta) = E + \log(1 - \beta/D) + \log f(\theta).
\]
Let $\alpha \df \beta - E$, and note that
\begin{equation}
\label{thetaalpha}
\theta = \frac{D - E - \alpha}{C} \cdot
\end{equation}
Then
\begin{equation}
\label{alpharecursive}
\alpha = \log\left(1 - \frac{E}{D} - \frac{\alpha}{D}\right) - \log f\left(\frac{D - E - \alpha}{C}\right)
\end{equation}
and thus 
\begin{equation}
\label{alphaF}
\alpha = F\left(\frac ED,\frac 1D,\frac DC,\alpha\right),
\end{equation} 
where
\[
F(x,y,z,w) \df \log(1 - x - y w) - \log f\big(z(1 - x - y w)\big).
\]
This concludes our change of variables, and we have rewritten \eqref{Btheta} as \eqref{alphaF}.

Note that $F(\0,w) = \alpha_0 \df -\log f(0)$ and $F_{|4}(\0,w) = 0$ for all $w\in \R$, and $F$ is analytic on a neighborhood of $\{0\}^3 \times \R$. Thus, $\alpha_0 - F(\0,\alpha_0) = 0$ and $\frac{\del}{\del w} [w - F(0,0,0,w)] = 1$. So by the implicit function theorem, the equation $w - F(w,x,y,z) = 0$ can be solved analytically for $w$ in terms of $(x,y,z)$ in a neighborhood of $(0,0,0)$. Since $\frac ED,\frac 1D,\frac DC \to 0$ as $B \to 0$, it follows that $\alpha$ can be written as a power series in $\frac ED,\frac 1D,\frac DC$ whenever $B$ is sufficiently small. By \eqref{thetaalpha}, we have
\[
\theta = \frac{1}{C}\left[D - E + \sum_{j = 0}^\infty \sum_{k = -j}^\infty \sum_{\ell = 0}^{j + k} c_{j,k,\ell} \frac{E^\ell}{C^j D^k}\right].
\]
The coefficients $c_{j,k,\ell}$ can be computed recursively using the formula \eqref{alpharecursive}.

The following lemma generalizes the above calculation:
\begin{lemma}
\label{lemmathetageneral}
If $f:(\aa,\theta,\xi)\mapsto f(\aa,\theta,\xi)$ is analytic in a neighborhood of $A\times \{(0,0)\} $, with $f(\aa,0,0) > 0$ for all $\aa\in A \subset \R^d$, then for all $B$ sufficiently small and for all $\aa \in A$, the equation
\begin{equation}
\label{thetageneral}
B^\theta = \theta f(\aa,\theta,B^\theta)
\end{equation}
has a unique solution:
\begin{equation}
\label{thetageneralsolved}
\theta = \frac1C \left[D - E - \sum_{j = 0}^\infty \sum_{k = -j}^\infty \sum_{\ell = 0}^{j + k} f_{j,k,\ell}(\aa) \frac{E^\ell}{C^j D^k} \right]
\end{equation}
(cf. \eqref{CDE}) for some functions $f_{j,k,\ell}$ analytic on a complex neighborhood $A_\C$ of $A$, such that
\[
\|f_{j,k,\ell}\| \lesssim \epsilon^{-(j + k + \ell)}
\]
for some $\epsilon > 0$. If $f$ is constant then
\[
\theta = \frac1C \left[D - E - \sum_{k = 0}^\infty \sum_{\ell = 0}^k c_{k,\ell} \frac{E^\ell}{D^k} \right]
\]
and if $f(\aa,0,0) = 1$ then $f_{0,k,0}(\aa) = 0$ for all $k$. If $f(\aa,0,0) = 1$ and $f$ is $\Q$-analytic, then so are $f_{j,k,\ell}$.
\end{lemma}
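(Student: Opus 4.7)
The plan is to carry out the motivating computation one step of the section in a parametrized setting, using the implicit function theorem twice. The first application disentangles $\theta$ from $\xi = B^\theta$, reducing \eqref{thetageneral} to an equation of the form $B^\theta = \theta \tilde f(\aa,\theta)$; the second application solves the resulting logarithmic fixed-point equation for the rescaled variable $\alpha$.

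First I would observe that the map $\xi \mapsto \xi - \theta f(\aa,\theta,\xi)$ has derivative $1 - \theta f_{|3}(\aa,\theta,\xi)$, which is uniformly close to $1$ for $\theta$ sufficiently small and $\aa$ in a complex neighborhood $A_\C$ of $A$. Applying the (analytic) implicit function theorem yields a function $\tilde f$, analytic on a neighborhood of $A_\C \times \{0\}$, with $\tilde f(\aa,0) = f(\aa,0,0) > 0$ and $B^\theta = \theta \tilde f(\aa,\theta)$. Now the chain of substitutions $\gamma \df C\theta$, $\beta \df D - \gamma$, $\alpha \df \beta - E$ from the motivating computation goes through verbatim, yielding the relation $\alpha = F(\aa, E/D, 1/D, D/C, \alpha)$ where
\[
F(\aa, x, y, z, w) \df \log(1 - x - yw) - \log \tilde f\big(\aa, z(1 - x - yw)\big).
\]
Crucially, $F_{|5}(\aa, 0, 0, 0, w) \equiv 0$ (since $x$ and $y$ disappear and $z = 0$ kills the $w$-dependence in the second term), so $\partial_w[w - F(\aa,0,0,0,w)] = 1$ at the fixed point $w = -\log \tilde f(\aa,0)$. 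A second application of the implicit function theorem, with $\aa$ as a parameter, gives $\alpha$ as an analytic function of $(\aa, E/D, 1/D, D/C)$ on a complex neighborhood of $A \times \{(0,0,0)\}$, valid once $B$ is small enough that $E/D$, $1/D$, $D/C$ lie in that neighborhood. Expanding $\alpha$ as a power series in $(x,y,z) = (E/D,1/D,D/C)$ and substituting into $\theta = (D - E - \alpha)/C$ produces \eqref{thetageneralsolved}, with the allowed index range $0 \leq \ell \leq j + k$ arising from the constraint that the exponents of $E$, $D^{-1}$, $C^{-1}$ must be nonnegative in each monomial $x^\ell y^{k+j-\ell} z^j$. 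The norm bound $\|f_{j,k,\ell}\| \lesssim \epsilon^{-(j+k+\ell)}$ then follows immediately from Cauchy's inequality \eqref{cauchy} applied to the polydisc of radius $\epsilon$ inside the domain of analyticity.

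For the three special cases: if $f$ is constant, then so is $\tilde f$, hence $F$ is independent of $z$, and therefore $\alpha$ is independent of $z = D/C$, killing all terms with $j \geq 1$ and producing the stated simplified expansion. If $f(\aa,0,0) = 1$, then $\tilde f(\aa,0) = 1$, and $F(\aa, 0, y, 0, w) = \log(1 - yw)$, whose unique analytic fixed point near $w = 0$ is $w \equiv 0$; thus $\alpha(\aa,0,y,0) \equiv 0$, forcing $f_{0,k,0}(\aa) = 0$ for all $k$. Finally, $\Q$-analyticity is preserved by each step: the first implicit function theorem applied to $\zeta - \theta f(\aa,\theta,\zeta) = 0$ at the basepoint $(\0,0,0)$ produces a $\Q$-analytic $\tilde f$ (since the hypothesis $f(\0,0,0) = 1$ gives $\partial_\zeta[\zeta - \theta f]|_{(0,0,0)} = 1 \in \Q^\times$); the composition $\log \tilde f$ is $\Q$-analytic at points where $\tilde f = 1$; and the second implicit function theorem similarly respects $\Q$-coefficients, yielding $\Q$-analytic $f_{j,k,\ell}$.

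The main obstacle I expect is the careful tracking of domains of analyticity: one must choose $\epsilon > 0$ and a complex neighborhood $A_\C$ of $A$ simultaneously so that both applications of the implicit function theorem succeed uniformly in $\aa$ and so that $E/D$, $1/D$, and $D/C$ all lie inside the resulting polydisc once $B$ is sufficiently small. This is essentially a compactness argument combined with the standard quantitative form of the implicit function theorem, but it requires verifying that $\log(1/B)$, $\log\log(1/B)$, and their ratios all lie in the right range before the power series expansion can be invoked.
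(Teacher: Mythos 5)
Your argument is correct and, at its core, follows the same route as the paper: the change of variables $\gamma = C\theta$, $\beta = D - \gamma$, $\alpha = \beta - E$, the fixed-point equation $\alpha = F\left(\aa,\frac ED,\frac 1D,\frac DC,\alpha\right)$ with $F_{|5}(\aa,\0,w) = 0$, and one application of the analytic implicit function theorem with $\aa$ as a parameter, followed by reading off the special cases ($f$ constant kills the $z$-dependence; $f(\aa,0,0)=1$ forces the solution to vanish on $x=z=0$, hence $f_{0,k,0}=0$; $\Q$-analyticity is preserved by each step). The one structural difference is your treatment of the third argument $\xi = B^\theta$: you eliminate it first by a preliminary implicit function theorem, obtaining $B^\theta = \theta\tilde f(\aa,\theta)$, whereas the paper notes that $B^\theta = e^{-(D-E-\alpha)} = \frac DC e^\alpha = z e^w$ exactly and simply inserts $z e^w$ into the third slot of $f$ inside $F$, so no preliminary elimination is needed. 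Your variant is legitimate (it does require the remark, which you should make explicit, that for the solutions in question $B^\theta$, like $\theta$, tends to $0$ and hence lies in the uniqueness neighborhood of that first IFT), but it buys nothing and adds one more uniformity-in-$\aa$ verification; the direct substitution is cleaner and yields the same function $F$ up to replacing $\tilde f(\aa,\,\cdot\,)$ by $f(\aa,\,\cdot\,,ze^w)$.

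One caveat on the norm bound: $f_{j,k,\ell}$ is the coefficient of the monomial $x^\ell\, y^{\,j+k-\ell}\, z^j$ in $(x,y,z) = (E/D,\,1/D,\,D/C)$, whose total degree is $2j+k$, so Cauchy's inequality \eqref{cauchy} on a polydisc of radius $\epsilon$ gives $\|f_{j,k,\ell}\| \lesssim \epsilon^{-(2j+k)}$, not the exponent $j+k+\ell$ claimed in the statement; the two disagree whenever $j \neq \ell$ (for instance the pure powers of $D/C$ have $j+k+\ell = 0$, and Cauchy alone does not make those coefficients uniformly bounded). So the bound does not follow ``immediately'' from Cauchy in the literal form stated. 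To be fair, the paper's own proof does not derive the stated bound at all, and the total-degree bound that Cauchy does give is what the subsequent applications actually need, so this is a defect shared with (indeed inherited from) the lemma's phrasing rather than a flaw specific to your argument.
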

\begin{proof}
First note that if $\alpha$ is defined as the unique solution to \eqref{thetaalpha}, i.e. $\alpha \df D - E - C\theta$, then
\[
B^\theta = e^{-(D - E - \alpha)} = \frac DC e^\alpha.
\]
Thus, repeating the above calculations shows that the equation \eqref{thetageneral} is equivalent to 
\[
\alpha = F\left(\aa,\frac ED,\frac 1D,\frac DC,\alpha\right),
\] 
where
\[
F(\aa,x,y,z,w) \df \log(1 - x - y w) - \log f\big(\aa, z(1 - x - y w), z e^w\big).
\]
As before, we have $F(\aa,\0,w) = \alpha_0(\aa) \df -\log f(\aa,0,0,0)$ and $F_{|5}(\aa,\0,w) = 0$ for all $w\in\R$, and $F$ is analytic in a neighborhood of $A \times \{\0\} \times \R$. So as before, by the implicit function theorem the equation $w = F(\aa,x,y,z,w)$ can be solved analytically for $w$ in terms of $(\aa,x,y,z)$ in a neighborhood of $A\times \{\0\}$, and thus $\alpha$ can be written as a power series in $\frac ED,\frac 1D,\frac DC$ with coefficients in $\H(U_\C)$ whenever $B$ is sufficiently small, for some complex neighborhood $A_\C$ of $A$. Applying \eqref{thetaalpha} demonstrates \eqref{thetageneralsolved}.

If $f$ is constant, then $F(\aa,x,y,z,w)$ is constant with respect to $\aa,z$, so $\alpha$ can be written as a power series in $\frac ED,\frac 1D$. If $f(\aa,0,0) = 1$, then $F(\aa,0,y,0,0) = 0$, which implies that the solution of $w = F(\aa,x,y,z,w)$ vanishes on the line $x = z = 0$. It follows that $f_{0,k,0}(\aa) = 0$ for all $k$.
\end{proof}

\section{Gauss IFS Examples}
\label{sectiongauss}

We now use Theorem \ref{maintheorem} to compute and estimate the Hausdorff dimensions of sets of the form
\[
F_E = \{[0;\an_1,\an_2,\ldots] : \an_1,\an_2,\ldots \in E\}
\]
where $E \subset \N$, and $[0;n_1,n_2,\ldots]$ represents the continued fraction expansion with partial quotients $n_1,n_2,\ldots$. To this end, we let $(U,V,u,v,q)$ be as in Subsection \ref{subsectiongauss} and for each $E \subset \N$ let $S(E) = \{1/\an : \an \in E\}$ be the Gauss PACIFS as Subsection \ref{subsectiongauss}, so that  $\Lambda_{S(E)} = F_E$.

\begin{remark}
\label{remarkgaussvalues}
Note that $h(\cdot) = 1$, that $v(0,\cdot) = 0$ and $v(\cdot,0) = 0$, and that $u$ is $\Q$-analytic.

If $E = \N$, then $\delta = 1$, and since $\mu h = \nu g = 1$, we have $g(x) = 1/(1 + x)$, and $\mu$ is the Lebesgue measure on $[0,1]$. In this case, we have $1/c = \mu g = \log(2)$.\Footnote{Note that $g$ is usually normalized so that $\mu g = 1$, i.e. $g(x) = \frac{1}{\log(2)(1 + x)}$; however, we find the normalization \eqref{normalization} more convenient.}
\end{remark}

In what follows we will consider various sequences of sets $E_N \to E$ (we recall that this notation means that the characteristic functions converge pointwise). In each case we let $S = S(E)$ and $S' = S_N = S(E_N)$. Similarly, we write
\begin{align*}
\delta_E &\df \delta_S, &
\delta_N &\df \delta_{S_N}, &
\delta &\df \lim_{N\to\infty} \delta_N,\\
\theta &= \delta_N - \delta, &
P(E,s) & \df P(S(E),s), &
F_N &\df F_{E_N} = \Lambda_{S_N}.
\end{align*}
Note that
\begin{align*}
\eta_i &= \sum_{\an\in E_N - E} \frac{1}{\an^{2(\delta + \theta) + i}} \cdot
\end{align*}
Our goal is now to express $\eta_i$ in terms of $N$ and $\theta$, as well as information about the sequence $(E_N)$. To do this we consider various cases for the sequence $(E_N)$.

\subsection{Polynomial sequences}

The two sequences considered in the introduction,
\begin{align*}
E_N &= \{1,\ldots,N\} \to E = \N, &
E_N &= \{N,N+1,\ldots\} \to E = \emptyset,
\end{align*}
share the property that the symmetric difference $E\triangle E_N$ is a tail of $\N$. Evidently, this means we need similar methods to compute or estimate $\eta_i$ in these two cases; specifically, we need the Euler--Maclaurin formula. It turns out that the Euler--Maclaurin formula is also useful in the more general case where $E\triangle E_N$ is the tail of a polynomial sequence in $\N$ (and $(E_N)$ is either ascending or descending).


\begin{theorem}[Euler--Maclaurin formula, {\cite{Apostol,Lehmer}}]
\label{theoremEM}
Given natural numbers $M<N$ and $p$, let $f$ be a $p$-times continuously differentiable function defined on $[M,N]$. Then
\[
\sum_{\an = M}^N f(\an) = \int_M^N f(x) \; \dee x + \frac{f(M) + f(N)}{2} + \sum_{i = 1}^{\lfloor p/2 \rfloor} \frac{B_{2i}}{(2i)!} \big( f^{(2i - 1)}(N) - f^{(2i - 1)}(M) \big) + R_p,
\]
where $(B_i)$ is the sequence of Bernoulli numbers, and
\[
|R_p| \leq \frac{2\zeta(p)}{(2\pi)^p} \int_M^N |f^{(p)}(x)| \; \dee x.
\]
\end{theorem}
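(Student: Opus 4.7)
The plan is to reduce the problem to a unit interval computation using the Bernoulli polynomials $B_k(x)$ (defined by $B_0 = 1$, $B_k'(x) = k B_{k-1}(x)$, and $\int_0^1 B_k(x)\,dx = 0$ for $k \geq 1$), assemble the telescoping sum over $n = M, M+1, \ldots, N-1$, and finally bound the remainder via the Fourier expansion of the periodic Bernoulli polynomial. The standard identities $B_k(1) - B_k(0) = [k=1]$ and $B_k(0) = B_k(1) = B_k$ for $k \geq 2$ (with $B_{2j+1} = 0$ for $j \geq 1$) will be used throughout.

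First I would establish the single-interval version: for $g \in C^p([0,1])$,
\begin{equation*}
\int_0^1 g(t)\,dt = \frac{g(0) + g(1)}{2} - \sum_{k=2}^{p} \frac{B_k}{k!}\bigl(g^{(k-1)}(1) - g^{(k-1)}(0)\bigr) + \frac{(-1)^{p+1}}{p!}\int_0^1 B_p(t)\,g^{(p)}(t)\,dt.
\end{equation*}
This is proved by repeated integration by parts starting from $\int_0^1 g(t)\,dt = \int_0^1 B_0(t) g(t)\,dt$ and using $B_1(t) = t - 1/2$ for the first step (yielding the trapezoidal term) and then $B_k'(t) = k B_{k-1}(t)$ for the subsequent steps. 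Next, applying this identity to $g(t) = f(n+t)$ for each integer $n \in [M, N-1]$ and summing, the endpoint contributions at interior integers telescope, leaving only the values at $M$ and $N$. Odd-index Bernoulli numbers beyond $B_1$ vanish, collapsing the inner sum to the indicated sum over $i = 1, \ldots, \lfloor p/2 \rfloor$; the residual integrals combine into the single integral $\frac{(-1)^{p+1}}{p!} \int_M^N \widetilde{B}_p(x)\,f^{(p)}(x)\,dx$, where $\widetilde{B}_p(x) = B_p(\{x\})$ is the periodic Bernoulli function.

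For the remainder bound on $R_p$, I would invoke the Fourier series
\begin{equation*}
\widetilde{B}_p(x) = -\frac{p!}{(2\pi i)^p}\sum_{k \in \Z \setminus \{0\}} \frac{e^{2\pi i k x}}{k^p},
\end{equation*}
valid for $p \geq 2$ (and for $p = 1$ away from integers). The triangle inequality immediately yields the uniform bound $|\widetilde{B}_p(x)| \leq \frac{2\, p!\, \zeta(p)}{(2\pi)^p}$. Substituting into the integral representation of $R_p$ and pulling the constant outside gives the claimed estimate
\begin{equation*}
|R_p| \leq \frac{1}{p!}\cdot\frac{2\,p!\,\zeta(p)}{(2\pi)^p}\int_M^N |f^{(p)}(x)|\,dx = \frac{2\zeta(p)}{(2\pi)^p}\int_M^N |f^{(p)}(x)|\,dx.
\end{equation*}

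The main obstacle is essentially bookkeeping: correctly tracking signs and the telescoping of odd-order terms to confirm that only even $B_{2i}$ survive in the displayed sum, and correctly matching the Fourier-analytic bound with the form of the remainder after integration by parts. Since the theorem is classical, references such as \cite{Apostol,Lehmer} can be cited for the explicit verification, and the proof need not be reproduced in full.
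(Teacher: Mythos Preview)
The paper does not prove this theorem; it is stated with citations to \cite{Apostol,Lehmer} and used as a black box. Your proposal is the standard textbook proof (integration by parts against Bernoulli polynomials, telescoping over unit intervals, and bounding the periodic Bernoulli function via its Fourier series), and it is correct, so there is nothing to compare against.
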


For $\alpha > 0$, letting $f(x) = x^{-(1 + \alpha)}$ and taking the limit as $N\to\infty$ yields the following corollary:

\begin{corollary*}
Fix $p \in \N$, $C < \infty$, and $\alpha > 0$ such that $|\alpha| \leq C$. Then for all $N\in\N$,
\begin{align*}
\sum_{\an \gepm N} \frac{1}{\an^{1+\alpha}} &= \frac{N^{-\alpha}}{\alpha} + \sum_{i=1}^{p - 1} P_i^\pm(\alpha) N^{-(i + \alpha)} + O_{p,C}( N^{-(p + \alpha)})
\end{align*}
where $\gepm$ can be taken to mean either $\geq$ or $>$, with the $\pm$ on the right-hand side depending on which choice is made, and $(P_i^\pm)_{i\geq 1}$ is an explicit sequence of polynomials (with rational coefficients):
\[
P_i^\pm(\alpha) = \frac{B_i^\pm}{i!} \binom{-(1 + \alpha)}{i - 1}
\]
where we use the convention $B_1^+ = 1/2$, $B_1^- = -1/2$ for the Bernoulli sequence. Note that $P_1^\pm(\alpha) = \pm 1/2$.
\end{corollary*}

By taking a formal limit as $p\to \infty$ we can think of this as an \emph{asymptotic expansion} of the sum $\sum_{\an \gepm N} \frac{1}{\an^{1+\alpha}}$:
\begin{equation}
\label{EM}
\sum_{\an \gepm N} \frac{1}{\an^{1+\alpha}} \equiv \frac{N^{-\alpha}}{\alpha} + \sum_{i = 1}^{\to\infty} P_i^\pm(\alpha) N^{-(i + \alpha)}
\end{equation}
where $A \equiv \sum_{i = i_0}^{\to\infty} a_i x_i$ means that for all $p \geq i_0$,
\[
A = \sum_{i = i_0}^{p - 1} a_i x_i + O_p(x_p).\Footnote{In the sequel, multiple summations are handled as follows: $A \equiv \sum_{i = i_0}^{\to\infty} \sum_{j = j_0}^{\to\infty} a_{ij} x_i y_j$ means that for all $p \geq i_0$, $q\geq j_0$, \[A = \sum_{i = i_0}^{p - 1} \sum_{j = j_0}^{q - 1} a_{ij} x_i y_j + O_p(x_p) + O_q(y_q).\]}
\]
However, note that the series \eqref{EM} does not actually converge (due to the explosion of the sequence of Bernoulli coefficients $(B_i)_{i = 1}^\infty$).

Now let $(s_n)$ be a sequence defined by a polynomial of degree $d$ and leading coefficient $a > 0$, say
\[
s_n = a (n^d + b_1 n^{d - 1} + \ldots + b_d).
\]
Fix $\alpha > 0$. Then for all $n$ sufficiently large,
\begin{align*}
\frac{1}{s_n^{(1 + \alpha)/d}}
&= \frac{(1 + b_1 n^{-1} + \ldots + b_d n^{-d})^{-(1+\alpha)/d}}{a^{(1+\alpha)/d} n^{1 + \alpha}}
= \sum_{i = 0}^\infty \frac{f_i(\alpha)}{n^{1 + i + \alpha}}
\end{align*}
for some entire functions $f_i$, with $f_0(\alpha) = a^{-(1 + \alpha)/d}$. Applying \eqref{EM} shows that
\begin{equation}
\label{EM2}
\sum_{n \gepm N} \frac{1}{s_n^{(1+\alpha)/d}} \equiv a^{-(1 + \alpha)/d} \frac{N^{-\alpha}}{\alpha} + \sum_{i = 0}^{\to\infty} \what f_i(\alpha) N^{-(i + \alpha)}
\end{equation}
for some functions $\what f_i$ holomorphic on $\C_{> -1}$, where for each $r\in\R$,
\[
\C_{> r} \df \{z\in \C : \Re z > r\}.
\]
We are now ready to prove Theorems \ref{theoremFleqN} and \ref{theoremFgeqN} from the introduction:

\begin{proposition}
\label{propositionpolysequence}
Let $(s_n)$ be a polynomial sequence of degree $d$, let $F \subset \N$ be an empty or strongly regular set disjoint from $\{s_M,s_{M+1},\ldots\}$ for some $M$, and consider the following cases:
\begin{align} \label{goingup} \tag{$\nearrow$}
E_N &= F \cup \{s_M,\ldots,s_N\} \hspace{0.1 in} \to E = F \cup \{s_M,s_{M+1},\ldots\}\\ \label{goingdown} \tag{$\searrow$}
E_N &= F \cup \{s_N,s_{N+1},\ldots\} \to E = F
\end{align}
Note that $\delta = \lim\delta_N = \max(\delta_E,1/2d)$, and for convenience of notation write $\deltabar = 2d\delta - 1 \geq 0$ and $\thetabar = 2d\theta$.
\begin{itemize}
\item[(i)] Suppose $\delta > 1/2d$, i.e. $\deltabar > 0$. Then
\begin{equation}
\label{HDFleqNgeneral}
\HD(F_N) = \delta_N \equiv \delta + \sum_{i = 0}^{\to\infty} \sum_{k = 1}^{\to\infty} \sum_{j = 0}^{k - 1} c_{i,k,j} \frac{\log^j(N)}{N^{i + k\deltabar}},
\end{equation}
with $c_{0,1,0} = \pm 1/\w\chi$. (Here and hereafter $\pm$ represents $+$ in the case \eqref{goingdown}, and $-$ in the case \eqref{goingup}.) In the case \eqref{goingup}, if $F = \emptyset$, $s_n = n$, and $M = 1$, then \eqref{HDFleqNgeneral} reduces to \eqref{HDFleqN}.
\item[(ii)] Suppose $\delta = 1/2d$, i.e. $\deltabar = 0$. Then
\begin{equation}
\label{HDFgeqNgeneral}
\begin{split}
\HD(F_N) = \delta_N \equiv \frac{1}{2d} & + \frac{1}{A d\log(N)}\left[\log\log(N) - \log\log\log(N)\phantom{\sum_{j=0}^\infty}\right.\\
&\left.+ \sum_{i = 0}^{\to\infty} \sum_{j = [i > 0]}^\infty \sum_{k = -j}^\infty \sum_{\ell = 0}^{j + k} c_{i,j,k,\ell} \frac{\log^\ell\log\log(N)}{N^i \log^j(N) \log^k\log(N)} \right]
\end{split}
\end{equation}
for some constants $c_{i,j,k,\ell}$, where $A = 2$ if $\cbar > 0$ and $A = 1$ if $\cbar = 0$. If $F = \emptyset$ and $a = 1$, then $c_{i,j,k,\ell} \in \Q$ for all $i,j,k,\ell$. In the case \eqref{goingdown}, if $F = \emptyset$ and $s_n = n$, then \eqref{HDFgeqNgeneral} reduces to \eqref{HDFgeqN}, and the constants $c_{k,\ell}$ and $c_{i,j,k,\ell}$ have the values specified in Theorem \ref{theoremFgeqN}.
\end{itemize}
\end{proposition}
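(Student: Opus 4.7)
The strategy is to apply Theorem \ref{maintheorem} with $S = S(E)$ and $S' = S_N = S(E_N)$, and then to extract an asymptotic expansion for $\theta = \delta_N - \delta$ by inverting the resulting equation $\Xi(S_N) = 0$. The main input beyond Theorem \ref{maintheorem} is an explicit asymptotic for the quantities $\eta_i$: since the symmetric difference $E \triangle E_N$ is, modulo finitely many terms, the tail $\{s_N, s_{N+1}, \ldots\}$ (in case \eqref{goingdown}) or $\{s_{N+1}, s_{N+2}, \ldots\}$ (in case \eqref{goingup}), and $\eta_i = \pm(-1)^i \sum_{n \gepm N} s_n^{-(2(\delta+\theta)+i)}$, the formula \eqref{EM2} applied with $\alpha = \bar\delta + \bar\theta + di$ yields
\[
\eta_i \equiv \pm (-1)^i\left[\frac{a^{-(1+\alpha)/d}}{\alpha} N^{-\alpha} + \sum_{j=0}^{\to\infty} \what f_{i,j}(\bar\theta)\, N^{-(j + \alpha)} \right]
\]
for functions $\what f_{i,j}$ holomorphic in $\bar\theta$ on a neighborhood of $0$. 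Rearranging collects $\eta_i$ into a formal expansion in the small parameters $N^{-1}$ and $N^{-(\bar\delta + \bar\theta)}$, with the sole possible singular factor being $1/\alpha$.

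\emph{Case (i).} When $\bar\delta > 0$, the denominator $\alpha$ is bounded away from $0$ for $|\bar\theta|$ small, so $\eta_i = O(N^{-\bar\delta})$ uniformly. Substituting the expansion of $\eta_i$ into the equation $\Xi = 0$, the leading terms read $\xi + c_{\{0\},0,0}\eta_0 - \bar\chi\,\theta + \text{(higher order)} = 0$ when $\cbar = 0$, and $(\mu h \nu g)\,\xi + \text{higher order} = 0$ when $\cbar > 0$; in either case the implicit function theorem / formal power-series inversion solves for $\theta$ as a convergent series in $N^{-1}$ and $N^{-\bar\delta}$ (the convergence is guaranteed by the bound \eqref{cijkbounds} and the Banach-space norm $\opnorm{\cdot}$ used in the proof of Theorem \ref{maintheorem}). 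Regrouping powers of $\log N$ that arise when $\bar\delta$ and integer exponents collide produces the shape \eqref{HDFleqNgeneral}. The leading coefficient $c_{0,1,0} = \pm 1/\bar\chi$ comes directly from the leading-order equation $\eta_0 \approx \bar\chi\,\theta$; specialization to $F = \emptyset$, $s_n = n$, $M = 1$ (so $d = a = 1$, $\delta = 1$, $\bar\delta = 1$) recovers \eqref{HDFleqN}.

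\emph{Case (ii).} When $\bar\delta = 0$, the $1/\alpha = 1/\bar\theta$ in the leading term of $\eta_0$ is genuinely singular and a direct power-series inversion fails. Instead we isolate this singular contribution: after dividing $\Xi = 0$ by the dominant term and using $\eta_i = O(N^{-di})$ for $i \geq 1$ together with the smallness of $\xi$, the equation can be rearranged into the form
\[
N^{-A d \bar\theta} \;=\; \bar\theta \cdot f\bigl(\aa_N,\, \bar\theta,\, N^{-A d \bar\theta}\bigr),
\]
with $f$ analytic near $(0,0,0)$, $f(0,0,0) > 0$, and $\aa_N$ a tuple of auxiliary parameters of the form $N^{-i}$ that tend to $0$. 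The factor $A$ records whether the operator contribution in front of $N^{-\bar\theta}$ comes from the $\xi$-term (giving $A = 1$ when $\cbar = 0$) or from $\cbar$ together with the doubled exponent (giving $A = 2$ when $\cbar > 0$, using Proposition \ref{propositionvanishing}(i) when $F = \emptyset$). Lemma \ref{lemmathetageneral} then solves this equation and yields the expansion \eqref{HDFgeqNgeneral}, with the triple-logarithm structure $\log\log(N) - \log\log\log(N) + \cdots$ built in.

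The remaining statements are consistency checks: for the rationality claim when $F = \emptyset$ and $a = 1$, Proposition \ref{propositionrational} gives $c_{I,j,k} \in \Q[\delta]$; combined with the rational coefficients in the Euler--Maclaurin expansion and the final rationality clause of Lemma \ref{lemmathetageneral}, all $c_{i,j,k,\ell}$ lie in $\Q[\delta]$ and, at $\delta = 1/(2d)$, in $\Q$. The explicit low-order coefficients in Theorem \ref{theoremFgeqN} are then obtained by tracking the leading terms of $\Xi$ (using Proposition \ref{propositionvanishing}(i) so that $\Xi = \sum_{p\geq 1} \nu\beta^p h$) and the leading terms of the Euler--Maclaurin expansion through the substitution in Lemma \ref{lemmathetageneral}. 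The main obstacle, and the step that must be executed with care, is the rearrangement in Case (ii) that casts $\Xi = 0$ into the precise form required by Lemma \ref{lemmathetageneral}: one must confirm that every $\eta_i$ and $\xi$ contribution either factors through $N^{-\bar\theta}$ with analytic dependence on $N^{-\bar\theta}$ and $\bar\theta$, or else enters through the bounded analytic parameters $\aa_N$, so that the singular $1/\bar\theta$ appears only in the prescribed way.
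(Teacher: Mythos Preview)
Your overall plan—apply Theorem \ref{maintheorem}, feed in the Euler--Maclaurin expansion \eqref{EM2} for the $\eta_i$, and in case (ii) invoke Lemma \ref{lemmathetageneral}—matches the paper's. However, two of the internal mechanisms are misidentified, and the second one is a genuine gap.

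\textbf{Case (i).} There is no $\cbar > 0$ subcase here: $\bar\delta > 0$ forces $\delta = \delta_E$ with $E$ regular, so $\cbar = 0$ always and one works with \eqref{mainformula2}. More importantly, the powers $\log^j N$ do not come from ``$\bar\delta$ and integer exponents colliding''. They come from the factor $N^{-k(\bar\delta+\bar\theta)} = N^{-k\bar\delta}\exp(-k\bar\theta\log N)$ in \eqref{etaI}. The paper substitutes $\gamma = N^{\bar\delta}\theta$, after which $\bar\theta\log N$ becomes $2d\,\gamma\cdot N^{-\bar\delta}\log N$, and solves the resulting fixed-point equation for $\gamma$ as a series in the three small quantities $N^{-1}$, $N^{-\bar\delta}$, and $N^{-\bar\delta}\log N$. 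The last of these is the sole source of logarithms, and the bound $j\leq k-1$ follows because each factor of $\log N$ carries an extra $N^{-\bar\delta}$. (Also, there is no $(-1)^i$ in $\eta_i$: in the Gauss PACIFS $b = 1/n > 0$.)

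\textbf{Case (ii).} Your claim that the equation rearranges to $N^{-Ad\bar\theta} = \bar\theta\, f(\ldots)$ with $A$ built into the exponent is not how the dichotomy arises, and as written it cannot be made to work. The paper introduces the auxiliary variable $\hat\xi \df a^{-(1+\bar\theta)/d}\,N^{-\bar\theta}/\bar\theta - \cbar$, which absorbs the singular $1/\bar\theta$ in $\eta_0$; then $\Xi$ becomes analytic in $(\bar\theta,\hat\xi)$ with coefficients expanded in $N^{-i}$, and $\Xi = 0$ is first solved for $\hat\xi$. This gives
\[
a^{-(1+\bar\theta)/d}\,\frac{N^{-\bar\theta}}{\bar\theta} - \cbar \;\equiv\; \sum_{i=0}^{\to\infty}\hat G_i(\bar\theta)\,N^{-i},\qquad \hat G_i(0)=0.
\]
When $\cbar > 0$ this is already $N^{-\bar\theta} = \bar\theta\cdot(\text{positive analytic})$, so Lemma \ref{lemmathetageneral} applied to $\bar\theta = 2d\theta$ yields $A=2$. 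When $\cbar = 0$, one also has $\hat G_i'(0)=0$ for $i>0$ (by Proposition \ref{propositionvanishing}(iv), since $\delta_E = 1/(2d)$ forces $E\neq\emptyset$ but the vanishing-at-zero structure persists), while $\hat G_0'(0) = \w\chi > 0$; hence the right-hand side is $\bar\theta^{\,2}\cdot(\text{positive analytic})$. Taking a square root gives $N^{-\bar\theta/2} = (\bar\theta/2)\cdot(\ldots)$, and applying the lemma to $\bar\theta/2$ yields $A=1$. This square-root step is the idea your sketch is missing; without it the $\cbar = 0$ subcase cannot be cast into the form Lemma \ref{lemmathetageneral} requires.
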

Parts (i) and (ii) imply Theorems \ref{theoremFleqN} and \ref{theoremFgeqN}, respectively.
\begin{proof}
First we observe by direct calculation that for all $N$, the sets $\{s_M,\ldots,s_N\}$ and $\{s_N,s_{N + 1},\ldots\}$ are strongly regular. Since the union of two strongly regular sets is strongly regular, it follows that $E$ is empty or strongly regular, and the $E_N$s are strongly regular. Thus Theorem \ref{maintheorem} applies, and \eqref{mainformula} holds.

By \eqref{EM2}, for all $i'\geq 0$ we have
\begin{equation}
\label{etai'}
\begin{split}
\eta_{i'} &= \pm\sum_{n\gepm N} \frac1{s_n^{2(\delta + \theta) + i'}}
= \pm\sum_{n\gepm N} \frac1{s_n^{(1 + d i' + \deltabar + \thetabar)/d}}\\
&\equiv \pm a^{-(1 + d i' + \deltabar + \thetabar)/d} \frac{N^{-(d i' + \deltabar + \thetabar)}}{d i' + \deltabar + \thetabar} + \sum_{i = 1}^{\to\infty} \what f_i(d i' + \deltabar + \thetabar) N^{-(i + d i' + \deltabar + \thetabar)}\\
&= N^{-(\deltabar + \thetabar)} \begin{cases}
\frac{a^{-(1 + \thetabar)/d}}{\thetabar} + \sum_{i = 1}^{\to\infty} F_i^{(0)}(\thetabar) N^{-i} & i' = \deltabar = 0\\
\sum_{i = d i'}^{\to\infty} F_i^{(i')}(\thetabar) N^{-i} & d i' + \deltabar > 0,
\end{cases}
\end{split}
\end{equation}
where $F_i^{(i')}$ is analytic on $W \df \C_{> -t}$, where $t = \deltabar$ if $\deltabar > 0$ and $t = 1$ if $\deltabar = 0$. The reason first term of the case $i' = \deltabar = 0$ is positive is that in the case \eqref{goingup}, direct calculation shows that $\delta = \delta_E > 1/2d$ and thus $\deltabar > 0$. In what follows we assume that $\epsilon < t$, so that $B_\C(0,\epsilon) \subset W$.

It follows that
\begin{equation}
\label{etaI}
\eta_I \equiv N^{-\#(I)(\deltabar + \thetabar)} \sum_{i = d \Sigma(I)}^{\to\infty} F_i^{(I)}(\thetabar) N^{-i}
\end{equation}
with $F_i^{(I)}$ analytic on $W$, for all $I\in \MM(\N)$ if $\deltabar > 0$ and for all $I \in \MM(\N_{\geq 1})$ if $\deltabar = 0$.

We now consider the two cases (i) and (ii).
\begin{itemize}
\item[(i)] Suppose $\deltabar > 0$. Then since $E$ is regular and $\delta = \delta_E$, we have $\cbar = 0$ and thus \eqref{mainformula2} holds.

Note that $\etaratio \leq s_N^{-1} \lesssim N^{-d}$ and
\[
|\eta_0| \lesssim N^{-(\deltabar + \thetabar)} \leq N^{-\deltabar/2}
\]
for all sufficiently large $N$. Let $C$ denote the implied constant. Then for all $I,j$, by \eqref{cijkboundsv2} we have
\[
|c_{I,j} \, \eta_I \theta^j| \leq \epsilon^{-(\Sigma(I) + I(0) + j)} C^{\Sigma(I) + I(0)} N^{-(d\Sigma(I) + (\deltabar/2) I(0))} (\epsilon/2)^j
\]
assuming $N$ is sufficiently large. Now fix $p,q\in\N$. Since
\[
|c_{I,j} \, \eta_I \theta^j| \leq \begin{cases}
N^{-d p} 2^{-(\Sigma(I) + I(0) + j)} & \Sigma(I) \geq p \\
N^{-(\deltabar/2) q} 2^{-(\Sigma(I) + I(0) + j)} & I(0) \geq q
\end{cases}
\]
as long as $N$ is sufficiently large, by \eqref{mainformula2} we have
\[
\Xi = \sum_{\substack{I \in \MM(\N) \\ \Sigma(I) < p \\ I(0) < q}} \sum_{j = 0}^\infty c_{I,j} \, \eta_I \theta^j + O\left(N^{-d p} + N^{-(\deltabar/2) q}\right)
\]
and by \eqref{cijkboundsv2}, for each $I$ the function $F_I(\theta) \df \sum_j c_{I,j} \theta^j$ is analytic on (a neighborhood of) $B_\C(0,\epsilon/2) \subset W$. Combining with \eqref{etaI} and using the fact that $\eta_\0 = 1$, $c_{\0,0} = 0$, and $c_{\0,1} = -\w\chi$ gives
\begin{align*}
\Xi \equiv -\w\chi \theta + \sum_{j = 2}^\infty c_{\0,j} \theta^j + \sum_{i = 0}^{\to\infty} \sum_{k = 1}^{\to\infty} F_{i,k}(\theta) N^{-(i + k(\deltabar + \thetabar))} 
\end{align*}
where the functions 
\[
F_{i,k} = \sum_{\substack{I \in \MM(\N)\\ d\Sigma(I) \leq i\\ \#(I) = k}} F_I F_i^{(I)}
\] 
are analytic on (a neighborhood of) $B_\C(0,\epsilon/2)$. Thus, the equation \eqref{mainformula} (i.e. $\Xi = 0$) can be solved for $\theta$ as
\[
\theta \equiv \frac1{\w\chi}\left[\sum_{j = 2}^\infty c_{\0,j} \theta^j + \sum_{i = 0}^{\to\infty} \sum_{k = 1}^{\to\infty} F_{i,k}(\theta) \exp\big(-\hspace{-0.03in}k\,\thetabar\log(N)\big) N^{-(i + k\deltabar)} \right].
\]
Letting $\gamma = N^{\deltabar} \theta$, we have
\[
\gamma \equiv \sum_{j = 1}^\infty c_{\0,j + 1} \gamma^{j + 1} N^{-j \deltabar} + \sum_{i = 0}^{\to\infty} \sum_{k = 0}^{\to\infty} F_{i,k + 1}\left(\frac{1}{N^\deltabar} \gamma\right) \exp\left(-2d(k + 1)\frac{\log(N)}{N^\deltabar}\gamma\right) N^{-(i + k \deltabar)}.
\]
Solving for $\gamma$ in terms of $N^{-1}$, $N^{-\deltabar}$, and $N^{-\deltabar} \log(N)$, and then multiplying by $N^{-\deltabar}$ yields
\[
\theta = \sum_{i = 0}^{\to\infty} \sum_{k = 1}^{\to\infty} \sum_{j = 0}^{k - 1} c_{i,k,j} \frac{\log^j(N)}{N^{i + k\deltabar}}\:\cdot
\]
Note that $\w\chi c_{0,1,0} = F_{0,1}(0) = F_{\{0\}}(0) F_0^{(\{0\})}(0) c_{\{0\},0} = F_0^{(\{0\})} = \pm 1$, so $c_{0,1,0} = \pm 1/\w\chi$.

In the case \eqref{goingup}, if $F = \emptyset$, $s_n = n$, and $M = 1$, then $d = \delta = 1$ and thus $\deltabar = 1$, so
\[
\HD(F_{\leq N}) \equiv 1 + \sum_{i=1}^{\to\infty} \sum_{j = 0}^{i - 1} c_{i,j} \frac{\log^j(N)}{N^i}\cdot
\]
This proves Theorem \ref{theoremFleqN} from the introduction. See Appendix \ref{appendix1} for the computation of some of the coefficients $c_{i,j}$ in this case.

\item[(ii)] Suppose $\deltabar = 0$. Let $\xihat = a^{-(1 + \thetabar)/d} N^{-\thetabar}/\thetabar - \cbar$ and $\what\eta = \xi - \xihat = \eta_0 - a^{-(1 + \thetabar)/d} N^{-\thetabar}/\thetabar$. Then by \eqref{etaI} and \eqref{cijkbounds}, we have
\[
\Xi \equiv \sum_{i = 0}^{\to\infty} F_i(\thetabar,\xi,N^{-\thetabar}) N^{-i},
\]
where the $F_i$s are analytic on $B_\C(0,\epsilon/2)^3 \subset W\times \C^2$. (In fact, $F_i$ is a polynomial of degree $\leq i$ with respect to its third input.) Next, observe that by \eqref{etai'} we have
\[
\what\eta = \sum_{i = 1}^{\to\infty} F_i^{(0)}(\thetabar) N^{-(i + \thetabar)}.
\]
Since $\xi = \what\eta + \xihat$ and $N^{-\thetabar} = a^{(1 + \thetabar)/d} \thetabar (\cbar + \xihat\,)$, it follows that
\[
\Xi \equiv \sum_{i = 0}^{\to\infty} G_i(\thetabar,\xihat\,) N^{-i},
\]
where the $G_i$s are analytic on a neighborhood of $(0,0)$. By direct calculation, we have $G_0(0,0) = c_{\0,0,0} = 0$ and $(G_0)_{|2}(0,0) = c_{\0,0,1} = 1$. Thus, the equation \eqref{mainformula} (i.e. $\Xi = 0$) can be solved for $\xihat$:
\begin{equation}
\label{wxi}
\xihat = a^{-(1 + \thetabar)/d} \frac{N^{-\thetabar}}{\thetabar} - \cbar \equiv \sum_{i = 0}^{\to\infty} \what G_i(\thetabar) N^{-i},
\end{equation}
where the $\what G_i$s are analytic on a neighborhood of $0$, and by direct calculation, $G_i(0,\cdot) = 0$ and thus $\what G_i(0) = 0$ for all $i > 0$; similarly, $\what G_0(0) = 0$ since $G_i(0,0) = 0$. If $\cbar > 0$, then by solving for $N^{-\thetabar}$, using Lemma \ref{lemmathetageneral} to solve for $\thetabar$ (with $\aa = N^{-1}$), and finally solving for $\delta_N$ in terms of $\thetabar$, we complete the proof of \eqref{HDFgeqNgeneral}. (Note that Lemma \ref{lemmathetageneral} applies to asymptotic expansions because it applies to the estimates of which the asymptotic expansion is a limit.) The inequality $j \geq [i > 0]$ corresponds to the fact that $\what G_i(0) = 0$ for all $i$, since $\wbar\theta \sim \frac{\log\log(N)}{A d \log(N)}$.

So suppose that $\cbar = 0$. Then direct calculation gives $G_i(\cdot,0) = G_i(0,\cdot) = 0$ for all $i > 0$, and thus $\what G_i'(0) = 0$ for such $i$. On the other hand, since $S \neq \emptyset$ we have $\what G_0'(0) = -c_{\0,1,0} = \w\chi > 0$. So \eqref{wxi} becomes
\[
N^{-\thetabar} \equiv \thetabar^2\left(a^{1/d}\w\chi + \thetabar \sum_{i = 0}^{\to\infty} H_i(\thetabar) N^{-i}\right)
\]
for some analytic functions $H_i$. By taking the square root, using Lemma \ref{lemmathetageneral} to solve for $\thetabar/2$, and solving for $\delta_N$ in terms of $\thetabar$, we complete the proof of \eqref{HDFgeqNgeneral}. The inequality $j \geq [i > 0]$ corresponds to the fact that $\what G_i(0) = 0$ and $\what G_i'(0) = 0$ for all $i > 0$.

Suppose that $S = \emptyset$ and $a = 1$. Then since $s_n \in \N$ for all $n$, we have $b_1,\ldots,b_d \in \Q$ and thus for all $i$ we have $f_i(x) \in \Q[x]$ and $\what f_i(x) \in \Q(x)$, with notation as in \eqref{EM2}. It follows that all $F_i$s and $F_i^{(0)}$s are $\Q$-analytic and thus by Proposition \ref{propositionrational}, $G_i$ and $\what G_i$ are $\Q$-analytic. Since $\cbar = 1/\nu h = 1$, it follows that $c_{i,j,k,\ell} \in \Q$ for all $i,j,k,\ell$. Moreover, by \vanishinga\ and direct calculation we find that $\what G_0(\cdot) = 0$. Since $a = \cbar = 1$, it follows that
\begin{equation}
\label{equiv22}
N^{-\thetabar} \underset{X}{\equiv} \thetabar
\end{equation}
where $X = N^{-1} \thetabar^2$. The second part of Lemma \ref{lemmathetageneral} now demonstrates \eqref{HDFgeqN}.

We now wish to check that the values specified in Theorem \ref{theoremFgeqN} are correct. By \eqref{equiv22}, the coefficients $c_{k,\ell}$ are the same as the coefficients arising from the equation $N^{-\thetabar} = \thetabar$, so they can be computed explicitly from \eqref{alpharecursive} without any operator calculations, and we omit their calculation. On the other hand, since
\[
f(a,\thetabar,\xi) = 1 + \sum_{i = 0}^{\to\infty} \what G_i(\thetabar) a^i
\]
in Lemma \ref{lemmathetageneral}, we have
\begin{align*}
c_{1,1,-1,0} &= - (\log f)_{|12}(0,0,0)\\
&= -\what G_1'(0) = (G_1)_{|1}(0,0) = \Coeff\big(N^{-1} \thetabar,c_{\0,0,1} \what\eta \,+\, c_{\{1\},0,0} \eta_1 \,+\, c_{\0,1,0} \theta\big)\\
&= \Coeff\big(N^{-(1 + \thetabar)},\what\eta) = -P_1^+(0) = -1/2.
\end{align*}
This completes the proof.
\qedhere\end{itemize}
\end{proof}

\subsection{Quasi-geometric sequences}
\label{subsectionquasigeom}

We can ask what happens in the cases \eqref{goingup} and \eqref{goingdown} of Proposition \ref{propositionpolysequence} when instead of being a polynomial sequence, the sequence $(s_n)$ is an exponentially growing sequence such as a geometric sequence or the so-called \cite{Singh} Fibonacci sequence $s_n = s_{n - 1} + s_{n - 2}$ (initial conditions $s_0 = 0$, $s_1 = 1$). Recall that the $n$th term in the Fibonacci sequence is given by the formula
\begin{equation}
\label{fibonacci}
s_n = a \lambda^n (1 + b \rho^n)
\end{equation}
where $a = \frac1{\sqrt 5}$, $\lambda = \phi = \frac{1 + \sqrt 5}{2}$, $b = -1$, and $\rho = \wbar\phi / \phi = -\phi^{-2}$. The formula \eqref{fibonacci} can also be used to describe a geometric sequence, by letting $a$ and $\lambda$ be integers and $b = 0$.

\begin{proposition}
\label{propositionquasigeom}
Let $(s_n)$ be a sequence of positive integers defined by \eqref{fibonacci}, with $a > 0$, $\lambda > 1$, $b\in\R$, and $|\rho| < 1$. Let $F \subset \N$ be empty or strongly regular and disjoint from $\{s_M,s_{M+1},\ldots\}$ for some $M$, and let $E_N \to E$ be as in \eqref{goingup} or \eqref{goingdown} of Proposition \ref{propositionpolysequence}.
\begin{itemize}
\item[(i)] If $\delta > 0$, then
\begin{equation}
\label{quasigeom1}
\theta = \sum_{k = 1}^\infty \sum_{i = 0}^\infty \sum_{\tm = 0}^\infty \sum_{j = 0}^{k - 1} c_{k,i,\tm,j} (\lambda^{-(2\delta k + i)} \rho^\tm)^N N^j.
\end{equation}
\item[(ii)] If $\delta = 0$, then
\begin{equation}
\label{quasigeom2}
\theta = \frac{1}{A\log(\lambda)} \frac{1}{N} \left[\log(N) - \log\log(N) + \sum_{i = 0}^\infty \sum_{\tm = 0}^\infty \sum_{j = 0}^\infty \sum_{k = -j}^\infty \sum_{\ell = 0}^{j+k} c_{i,\tm,j,k,\ell} (\lambda^{-i} \rho^\tm)^N \frac{\log^\ell\log(N)}{N^j \log^k(N)}\right],
\end{equation}
where $A = 2$ if $\cbar > 0$ and $A = 1$ if $\cbar = 0$ (equivalently, $A = 2$ if $E = \emptyset$ and $A = 1$ if $\#(E) = 1$). Moreover, $c_{i,j,k,\tm} \in R \df \Q(\lambda,\rho)[a,\log(a),\log(\lambda),b,\log(\rho)]$ for all $i,j,k,\tm$.
\end{itemize}
Note that these formulas are exact and are not asymptotic expansions.

\end{proposition}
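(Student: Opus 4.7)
The plan is to follow the template of the proof of Proposition \ref{propositionpolysequence}, replacing the Euler--Maclaurin expansion of $\eta_{i'}$ by a direct geometric-series expansion. This is what upgrades the asymptotic expansions of the polynomial case to the convergent expansions \eqref{quasigeom1} and \eqref{quasigeom2}: the tail of a true geometric progression can be summed exactly, whereas Euler--Maclaurin only produces an asymptotic series. Concretely, with $s = 2(\delta + \theta) + i'$ and $N$ large enough that $|b\rho^n| < 1$ for all $n \geq N$, I would substitute \eqref{fibonacci} into the defining sum, expand by the binomial series $(1 + b\rho^n)^{-s} = \sum_{\tm \geq 0} \binom{-s}{\tm} b^\tm \rho^{\tm n}$, and sum term by term to obtain
\[
\eta_{i'} \;=\; \pm\, a^{-s} \sum_{\tm = 0}^\infty \binom{-s}{\tm} b^\tm \frac{(\lambda^{-s}\rho^\tm)^N K_\tm}{1 - \lambda^{-s}\rho^\tm},
\]
where $K_\tm \in \{1,\lambda^{-s}\rho^\tm\}$ is a boundary factor depending on whether $\gepm$ is $\geq$ or $>$. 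Since $\lambda > 1$, $|\rho| < 1$, and $s > 0$ throughout, every $|\lambda^{-s}\rho^\tm|<1$ and the series converges absolutely; factoring out $(\lambda^{-(2\delta + i')})^N$ leaves the $\theta$-dependence sitting inside an analytic function.

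For part (i), since $E$ is strongly regular with $\delta_E = \delta$ we have $\cbar = 0$, so \eqref{mainformula2} applies and the bound \eqref{cijkboundsv2} allows the series to be reorganized by exponential order. Inserting the closed form for $\eta_{i'}$ into \eqref{mainformula2} produces an equation of the shape
\[
-\w\chi\,\theta \;+\; \sum_{k = 1}^\infty \sum_{i = 0}^\infty \sum_{\tm = 0}^\infty F_{k,i,\tm}(\theta)\,(\lambda^{-(2\delta k + i)}\rho^\tm)^N \;=\; 0
\]
with each $F_{k,i,\tm}$ analytic near $0$. The implicit function theorem, applied in a Banach space of convergent multi-exponential series analogous to the space $\AA$ of Section \ref{sectionproof}, solves this uniquely for $\theta$ as a convergent series in $\lambda^{-2\delta N}$, $\lambda^{-N}$, and $\rho^N$. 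The factors $N^j$ with $j \leq k - 1$ in \eqref{quasigeom1} appear when one linearizes powers $\lambda^{-\theta k N}$ upon iterated back-substitution; the constraint $j \leq k - 1$ reflects the fact that each power of $N$ must be paid for by one extra factor of $\lambda^{-2\delta N}$.

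For part (ii), the hypothesis $\delta = 0$ forces the going-down case with $\cbar \geq 0$: since $\delta_S \leq \delta = 0$ and $S$ regular with $\delta_S = 0$ requires $P(S,0) = \log\#S = 0$, the only options are $E = \emptyset$ (where $\cbar > 0$) and $E = \{n_0\}$ a single point (where $\cbar = 0$). In the $\cbar > 0$ subcase, the leading behavior of $\xi = \eta_0 - \cbar$ is
\[
\xi \;\approx\; a^{-2\theta}\,\frac{\lambda^{-2\theta N}}{1 - \lambda^{-2\theta}} \;-\; \cbar,
\]
and inserting this into $\Xi = 0$ (using $c_{\0,0,1} = 1$) and rearranging yields an equation of the form $B^{\tilde\theta} = \tilde\theta\,f(\aa,\tilde\theta, B^{\tilde\theta})$ with $B = e^{-N}$, $\tilde\theta = 2\theta\log\lambda$, $f(\aa,0,0) = \cbar > 0$, and $\aa$ collecting the $\rho^N$-dependence through analytic functions. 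Lemma \ref{lemmathetageneral} then delivers \eqref{quasigeom2} with $A = 2$. In the $\cbar = 0$ subcase, the leading term of $\Xi$ is $-\w\chi\,\theta$, so $\Xi = 0$ forces $\theta \approx \eta_0/\w\chi \asymp \lambda^{-2\theta N}/\theta$; after taking a square root and renormalizing, Lemma \ref{lemmathetageneral} again applies and produces \eqref{quasigeom2} with $A = 1$. Rationality of the coefficients $c_{i,\tm,j,k,\ell} \in R$ in the case $E = \emptyset$ follows from Proposition \ref{propositionrational} together with the explicit appearance of $a,\lambda,b,\rho$ and their logarithms as generators of $R$, and from the fact that Lemma \ref{lemmathetageneral} preserves analyticity with coefficients in $R$.

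The main obstacle is setting up the right Banach space of multi-exponential series so that the implicit function theorem and Lemma \ref{lemmathetageneral} apply uniformly in the two distinct exponential scales $\lambda^{-N}$ and $\rho^N$, and verifying that the operator series of Theorem \ref{theoremoperatorequation} converges in this space so that \eqref{mainformula} takes the required shape. Once this framework is in place, the algebraic manipulations run parallel to those of Proposition \ref{propositionpolysequence}, and the rationality claim reduces to routine bookkeeping tracking the generators of $R$ through the expansions.
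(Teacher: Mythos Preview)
Your proposal is correct and follows essentially the same architecture as the paper's proof: expand $\eta_{i'}$ via the binomial and geometric series, feed the result into Theorem \ref{maintheorem}, then in case (i) solve for $\theta$ after the substitution $\gamma = \lambda^{2\delta N}\theta$, and in case (ii) invoke Lemma \ref{lemmathetageneral} (directly when $\cbar > 0$, after a square root when $\cbar = 0$).

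The one tactical difference worth noting concerns exactly the point you flag as the ``main obstacle''. You propose constructing a bespoke Banach space of multi-exponential series in the scales $\lambda^{-N}$ and $\rho^N$ and applying the implicit function theorem there. The paper sidesteps this entirely by invoking Corollary \ref{corollaryanalytic}: it packages the exponential quantities into a finite-dimensional parameter
\[
\tt_N = \begin{cases}
(\lambda^{-2(\delta+\theta)N},\,\lambda^{-N},\,\rho^N) & \delta > 0,\\
(\lambda^{-2(\delta+\theta)N},\,\lambda^{-N},\,\rho^N,\,\xihat\,) & \delta = 0,
\end{cases}
\]
and checks that $\eta_i$ and $\xi$ are given by holomorphic functions $F_i,F_*$ of $(\tt_N,\theta)$ satisfying the norm bounds of Corollary \ref{corollaryanalytic} (this is where the estimate $|\binom{z}{\tm}| \leq e^{\tm + |z|}$ is used). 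Corollary \ref{corollaryanalytic} then yields $\Xi = F(\tt_N,\theta)$ with $F$ holomorphic on a fixed neighborhood of the origin in $\C^{d+1}$, so one is back to ordinary finite-dimensional analyticity and the remainder of the argument proceeds without any new function-space machinery. Your route would also work, but the paper's is shorter. One minor point: for the rationality claim the paper cites Proposition \ref{propositionrationalv2} rather than \ref{propositionrational}; since $S = \emptyset$ in that subcase either applies.
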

\begin{remark*}
Proposition \ref{propositionquasigeom} can be generalized to the setting where \eqref{fibonacci} is replaced by the formula
\[
s_n = a \lambda^n \left(1 + \sum_{i = 1}^d b_i \rho_i^n\right).
\]
This can be proven by making minor changes to the proof below.
\end{remark*}
\begin{proof}
As in the proof of Proposition \ref{propositionpolysequence}, for all $N$ we observe by direct calculation that the sets $\{s_M,\ldots,s_N\}$ and $\{s_N,s_{N + 1},\ldots\}$ are strongly regular, and thus since the union of two strongly regular sets is strongly regular, it follows that $E$ is empty or strongly regular and the $E_N$s are strongly regular, so Theorem \ref{maintheorem} applies and \eqref{mainformula} holds.

Let $\omega = 1$ if \eqref{goingup} holds and $\omega = 0$ if \eqref{goingdown} holds. Then $\eta_i = \pm f_\omega\big(2(\delta + \theta) + i\big)$, where
\begin{align*}
f_\omega(\alpha) &\df \sum_{n \geq N + \omega} \frac1{s_n^\alpha}
= \sum_{n\geq N + \omega} a^{-\alpha} \lambda^{-n\alpha} \sum_{\tm = 0}^\infty \binom{-\alpha}\tm b^\tm \rho^{\tm n}\\
&= \sum_{\tm = 0}^\infty a^{-\alpha} \binom{-\alpha}\tm b^\tm \sum_{n\geq N + \omega} (\lambda^{-\alpha} \rho^\tm)^n
= \sum_{\tm = 0}^\infty a^{-\alpha} \binom{-\alpha}\tm b^\tm \frac{(\lambda^{-\alpha} \rho^\tm)^{N + \omega}}{1 - \lambda^{-\alpha} \rho^\tm}\\
&= \sum_{\tm = 0}^\infty f_{t,\omega}(\alpha) (\lambda^{-\alpha} \rho^\tm)^N
\end{align*}
where for each $\tm > 0$, $f_{\tm,\omega}(\alpha) = a^{-\alpha} \binom{-\alpha}\tm b^\tm \frac{(\lambda^{-\alpha} \rho^\tm)^\omega}{1 - \lambda^{-\alpha} \rho^\tm}$ is analytic on $W = \C_{> \log(\rho)/\log(\lambda)}$; similarly, $f_{0,\omega}$ is analytic on $\C_{> 0}$. Note that since
\begin{align*}
\left|\binom z\tm\right| &\leq \frac{(|z| + \tm)^\tm}{\tm!} \leq \left(\frac{e (|z| + \tm)}{\tm}\right)^\tm\\
&\leq e^\tm \exp(|z|/\tm)^\tm = e^{\tm + |z|},
\end{align*}
we have $|f_{\tm,\omega}(\alpha)| \lesssim C^{\tm + |\alpha|}$ for some constant $C \geq 1$.

It follows that
\begin{align*}
\eta_i &= \pm f_\omega\big(2(\delta + \theta) + i\big)
= \lambda^{-2(\delta + \theta) N} \lambda^{-i N} \begin{cases}
\frac{a^{-2\theta}}{1 - \lambda^{-2\theta}} + \sum_{\tm = 1}^\infty f_{\tm,\omega}(2\theta) \rho^{\tm N} & \delta = i = 0\\
\sum_{\tm = 0}^\infty \pm f_{\tm,\omega}(2(\delta + \theta) + i) \rho^{\tm N} & 2\delta + i > 0
\end{cases}
\end{align*}
where as before, if $\delta = i = 0$ we use direct calculation to rule out the case \eqref{goingup}, allowing us to conclude that the first term of the top case is positive, as well as to reduce this term using the equality $\omega = 0$. Now since $|f_{\tm,\omega}(2(\delta + \theta) + i))| \lesssim C^{i + \tm}$ for all $\theta$ sufficiently close to $0$, there exists a ball $B$ centered at $\0$ satisfying the required bounds appearing Corollary \ref{corollaryanalytic} for the appropriate functions $F_i$, $F_*$, where
\[
\tt_N =  \begin{cases}
(\lambda^{-2(\delta + \theta) N},\lambda^{-N},\rho^N) & \text{ if $\delta > 0$}\\
(\lambda^{-2(\delta + \theta) N},\lambda^{-N},\rho^N,\xihat\,) & \text{ if $\delta = 0$}
\end{cases}
\]
Here $\xihat \df \lambda^{-2\theta N} \frac{a^{-2\theta}}{1 - \lambda^{-2\theta}} - \cbar$, and we use the fact that $\cbar = 0$ when $\delta > 0$ to write $\xi$ in terms of $\tt_N,\theta$ in that case. Thus by Corollary \ref{corollaryanalytic}, there exists a function $f$ analytic on $B$ such that $\Xi = f(\tt_N,\theta)$.
\begin{itemize}
\item[(i)] Suppose that $\delta > 0$. Then
\begin{align*}
\Xi &= F_{\0}(\theta) + \sum_{k = 1}^\infty \sum_{i = 0}^\infty \sum_{\tm = 0}^\infty F_{k,i,\tm}(\theta) \exp(-2k N \theta) \lambda^{-2k \delta N} \lambda^{-i N} \rho^{\tm N}\\
&= F_{\0}(\theta) + \sum_{k = 1}^\infty \sum_{i = 0}^\infty \sum_{\tm = 0}^\infty \sum_{j= 0}^\infty F_{k,i,\tm}(\theta) \frac{(-2k N \theta)^j}{j!} \lambda^{-2k \delta N} \lambda^{-i N} \rho^{\tm N}
\end{align*}
where $(F_{k,i,\tm})$ are analytic in a fixed neighborhood of $\0$, and $F_{\0}(0) = 0$ and $F_{\0}'(0) = c_{\0,1,0} = -\w\chi \neq 0$. Letting $\gamma = \lambda^{2\delta N} \theta$, solving for $\gamma$, and then dividing by $\lambda^{2\delta N}$ yields \eqref{quasigeom1}.

\item[(ii)] Now suppose that $\delta = 0$. Then since $\delta_E \leq \delta$, we have $\#(E)  \leq 1$ and thus we are in the case \eqref{goingdown}. Now let $\what\eta = \xi - \xihat = \eta_0 - a^{-2\theta} \lambda^{-2\theta N}/(1 - \lambda^{-2\theta})$. Since $\lambda^{-2\theta N} = (1 - \lambda^{-2\theta})(\cbar + \xihat\,)$ and $\xi = \what\eta + \xihat$, it follows that
\[
\Xi = \sum_{i = 0}^\infty \sum_{\tm = 0}^\infty F_{i,\tm}(\theta,\xihat\,) \lambda^{-iN} \rho^{\tm N}
\]
for some analytic functions $F_{i,\tm}$, and by direct calculation we have $F_{0,0}(0,0) = 0$ and $F_{0,0|2}(0,0) = c_{\0,0,1} = 1$. Thus we can solve for $\xihat$, which yields
\[
\frac{\lambda^{-2\theta N}}{2\theta\log(\lambda)} = \left(\frac{1 - \lambda^{-2\theta}}{2\theta\log(\lambda)}\right) \left(\cbar + \sum_{i = 0}^\infty \sum_{\tm = 0}^\infty F_{i,\tm}(\theta) (\lambda^{-i} \rho^\tm)^N\right).
\]
Note that $F_{i,\tm}(0) = 0$ for all $i,\tm$, since on a formal level, when $\theta = 0$, we have $\lambda^{-2\theta} = 0$ and thus $\eta_i = 0$ for all $i > 0$.

If $E = \emptyset$ then $\cbar = 1$, and thus Lemma \ref{lemmathetageneral} applies (with $\thetabar = 2\theta\log(\lambda)$, $B = e^{-N}$, and $\aa = (\lambda^{-N},\rho^N)$), yielding \eqref{quasigeom2} with $A = 2$. It can be shown using Proposition \ref{propositionrationalv2} that $c_{i,\tm,j,k,\ell} \in R$ for all $k,i,\tm$. This is because each function
\[
F_\tm^{(i)}(\theta) \df f_{\tm,0}(2\theta + i)
\]
is $R$-analytic, or $R$-meromorphic if $i = \tm = 0$.

If $\#(E) = 1$, then $\cbar = 0$, and since $\delta = 0$, by \vanishingc\ we have $c_{I,0,k} = 0$ for all $I,k$ with $I \neq \0$. Moreover, $F_{0,0}'(0) = -c_{\0,1,0} = \w\chi > 0$. So by taking the square root of the previous equation and letting $\thetabar = \theta\log(\lambda)$, Lemma \ref{lemmathetageneral} shows that \eqref{quasigeom2} holds with $A = 1$.
\qedhere\end{itemize}
\end{proof}

\subsection{Miscellaneous examples}

In the next two examples we consider relatively simple sequences $(E_N)$ of two-element sets with dimension tending to zero, one where both generators tend to zero and another where one of the generators is fixed. These examples illustrate the variety of behavior that can occur when solving \eqref{mainformula}.

\begin{example}
Let $E_N = \{N,N+1\} \to E = \emptyset$. Then
\begin{equation}
\label{thetaNN1}
\theta = \frac{1}{2\log(N)} \left[\log(\phi) + \sum_{i = 2}^\infty \sum_{j = 1}^{i - 1} \frac{c_{i,j}}{N^i \log^j(N)}\right]
\end{equation}
for all $N$ sufficiently large, where $c_{i,j} \in \Q[\phi,\log(\phi)]$ for all $i,j$, where $\phi \df (1 + \sqrt 5)/2$. In particular, 
\[
c_{2,1} = \frac{4\phi^{-1} \log(\phi)}{1 + \phi^{-2}}\cdot
\]
\end{example}
\begin{proof}
Since $E$ is empty and the $E_N$s are strongly regular, Theorem \ref{maintheorem} applies and thus \eqref{mainformula} holds. Now $\cbar = 1$ and thus for all $i$,
\[
\eta_i = \frac{1}{N^{i + 2\theta}} + \frac{1}{(N+1)^{i + 2\theta}} = N^{-(i + 2\theta)} \left[1 + \frac{1}{1 + N^{-(i + 2\theta)}}\right] = \begin{cases}
\cbar + F_*(N^{-2\theta} - \phi^{-1}) & i = 0\\
F(N^{-(i + 2\theta)}) & i > 0,
\end{cases}
\]
where $F_*,F$ are $\Q[\phi]$-analytic functions with $F_*(0) = F(0) = 0$, $F_*'(0) = 1 + \phi^{-2} > 0$, and $F'(0) = 2$. So $\xi = F_*(N^{-2\theta} - \phi^{-1})$, and thus by Corollary \ref{corollaryanalytic}, for all $N$ sufficiently large we have
\[
\Xi = G(N^{-1},\theta,N^{-2\theta} - \phi^{-1})
\]
where $G$ is analytic in a neighborhood of $\0$, $G(\0) = 0$, and $G_{|3}(\0) = F_*'(0) c_{\0,0,1} = 1 + \phi^{-2}$. Moreover, by either Proposition \ref{propositionrationalv2} or Proposition \ref{propositionrational}, $G$ is $\Q[\phi]$-analytic. Since $S = \emptyset$ and $\delta = 0$, by Proposition \ref{propositionvanishing}, solving $\Xi = 0$ for $N^{-2\theta} - \phi^{-1}$ and then rearranging yields
\[
N^{-2\theta} = \phi^{-1} + N^{-2} \theta H(N^{-1},N^{-1}\theta)
\]
where $H$ is $\Q[\phi]$-analytic. Taking logarithms and dividing by $-2\log(N)$, we see that
\[
\theta = \frac1{2\log(N)}\big[\log(\phi) - \log(1 + \phi N^{-2} \theta H(N^{-1},N^{-1} \theta))\big].
\]
By the implicit function theorem applied to the above equation treating $\theta$, $N^{-1}$, and $\log(N)^{-1}$ as the basic variables, solving for $\theta$ demonstrates \eqref{thetaNN1}. To see why the bounds on $i$ and $j$ follow, note that if we substitute $\alpha = 2\theta\log(N) - \log(\phi)$, then in the resulting formula all terms are of the form $N^{-i} \log(N)^{-j} \alpha^k$, with $1 \leq j \leq i-1$. An induction argument shows that these bounds hold for all terms of the series representing $\alpha$.

Now, $c_{2,1} = -\phi\frac{\log(\phi)}{2} H(0,0)$, and
\[
H(0,0) = \frac{G_{|112}(\0)}{G_{|3}(\0)} = \frac{(\phi^{-1} F'(0))^2}{F_*'(0)} \cdot \frac{c_{2\{1\},1,0}}{c_{\0,0,1}} = \frac{4\phi^{-2}}{1 + \phi^{-2}} \nu \beta_{1,0} \beta_{1,1} h,
\]
where in the last step we use Proposition \ref{propositionvanishing} to ignore the other possible contributions to $c_{2\{1\},1,0}$. The calculation is completed by observing that
\begin{align*}
\beta_{1,1} h(x) &= \Coeff\big(b^1 \theta^1, e^{\theta v(b,x)}\big) = \Coeff(b^1,v(b,x)) = -2 x\\
\nu \beta_{1,0} \beta_{1,1} h &= -2\Coeff\big(b^1 \theta^0, e^{\theta v(b,0)} u_b(0)\big)
= -2\Coeff\big(b^1,b) = -2.
\qedhere\end{align*}
\end{proof}

\begin{example}
Let $E_N = \{1,N\} \to E = \{1\}$. Then
\begin{equation}
\label{HDF1N}
\theta = \frac1{2\log(N)} \left[\log\log(N) - \log\log\log(N) + \sum_{i = 0}^\infty \sum_{j = [i > 0]}^\infty \sum_{k = -j}^\infty \sum_{\ell = 0}^{j + k} c_{i,j,k,\ell} \frac{\log^\ell\log\log(N)}{N^i \log^j(N) \log^k\log(N)} \right]
\end{equation}
for all $N$ sufficiently large, and $c_{0,0,0,0} = -\log\log(\phi) > 0$. Notice that in the second summation, we use the Iverson bracket notation: $[\Phi] = 1$ when $\Phi$ is true and $[\Phi] = 0$ when $\Phi$ is false. 
\end{example}
\begin{proof}
Since $E$ and the $E_N$s are strongly regular, Theorem \ref{maintheorem} applies and thus \eqref{mainformula} holds. Now
\[
\eta_i = \sum_{\an\in E_N - E} \frac{1}{\an^{i + 2(\delta + \theta)}} = N^{-i} N^{-2\theta}
\]
and since $P(E,\delta) = P(E,0) = \log\#(E) = 0$, we have $\cbar = 0$. Thus by Corollary \ref{corollaryanalytic} and \vanishingc,
\[
\Xi = \theta F_0(\theta) + N^{-2\theta} F_1(N^{-2\theta}) + N^{-2\theta} \theta F_2(N^{-1},\theta,N^{-2\theta})
\]
where $F_0,F_1,F_2$ are analytic in a neighborhood of $\0$ with $F_0(\0) = c_{\0,1,0} = \w\chi =\chi = 2\log(\phi) > 0$ and $F_1(\0) = c_{\0,0,1} = 1 > 0$. Solving $\Xi = 0$ for $N^{-2\theta}$ gives
\[
N^{-2\theta} = \theta \big(2\log(\phi) + \theta g(N^{-1},\theta)\big) = 2\theta \big(\log(\phi) + (1/2) \theta g(N^{-1},\theta)\big)
\]
where $g$ is analytic in a neighborhood of $\0$. Thus Lemma \ref{lemmathetageneral} applies and we have \eqref{HDF1N}, with $c_{0,0,0,0} = -\log\log(\phi)$. The $i=j=0$ case of the summation is ruled out because of the factor $\theta$ appearing in $(1/2)\theta g(N^{-1},\theta)$ above.
\end{proof}

\section{Directions to further research}
\label{sectionopen}

We conclude the main body of our paper by presenting a sample of problems and research directions, which we hope will partially illustrate the wide scope awaiting future exploration.

We speculate that the key ideas behind our our basic perturbation result, Theorem \ref{theoremoperatorequation}, might apply more generally. For instance, it would be interesting to leverage our perturbation theorem or some variant thereof to analyze other functionals that arise in the study of dynamical systems and stochastic processes, and perturbations thereof.

With a view to developing asymptotic expansions that lie beyond the scope of this paper, here are three concrete CIFSes for which our methods do not directly apply:
\begin{itemize}
\item The co-Cantor similarity IFS described in Example \ref{examplecocantor}.
\item The CIFS $u_n: x \mapsto (1 + x^n)2^{-n}$ for $n \in \N$, described in Example \ref{exampleNotPACIFS}.
\item The prime alphabet Gauss CIFS: $u_p: x \mapsto (p + x)^{-1}$ for primes $p$.
\end{itemize}
It may be the case that it is impossible to develop an asymptotic expansion for the last example above, which is an OSC PACIFS unlike the previous two examples.

It is natural to attempt a generalization of our results in \6\ref{subsectionquasigeom} where we studied alphabets that were quasi-geometric (e.g. Fibonacci) sequences to the broader class of \emph{constant-recursive sequences}, i.e. sequences satisfying a linear recurrence with constant coefficients. One could consider holonomic or $P$-recursive sequences, $k$-automatic sequences, or $k$-regular sequences. It would be interesting to precipitate connections with ideas familiar to {\it analytic combinatorics} and {\it analysis of algorithms} communities. Perhaps investigating the coefficient numerology in the asymptotic expansions studied in this paper will lead more directly to such links.

Though the main applications in this article have emphasized the approximation of real numbers by rationals using the simple continued fraction algorithm, there exist several avenues of active research inspired by this particular seam with a multitude of surprising interactions with dynamical systems and number theory. Dajani and Kraaikamp's Carus monograph \cite{DajaniKraaikamp} is a beautifully written introduction to the ergodic theory of several numeration schemes and continued fraction algorithms; and for higher-dimensional variants see \cite{Schweiger, Schweiger2, ArnouxSchmidt, Berthe}. 
It would be interesting to leverage our techniques to study analogues of our results in any of these settings.

For instance, one could focus on any one of several existing families of piecewise smooth expanding maps of the interval with infinitely many branches, e.g. those arising from the family of \emph{Japanese continued fractions}, named for Hitoshi Nakada, Shunji Ito, and Shigeru Tanaka -- see \cite{CesarattoVallee2,CarminatiTiozzo2,KSS2,BDV3,DHKM} for these systems and some variations. Furthermore, continued fraction algorithms arising from study of geodesic flows on negatively curved surfaces \cite{KatokUgarcovici2,BocaMerriman}, and continued fraction expansions over the field of Laurent series \cite{Schmidt9,Wu3,BertheNakada,HWWY} would be natural environs to investigate analogues of our results.

To conclude, here are three scenarios to reconnoiter in the higher-dimensional setting:
\begin{itemize}
\item Find analogues of our results for \emph{complex continued fractions}, where one studies the complex analogue of the Gauss CIFS $\{ u_{n}: x \mapsto (n+x)^{-1} \;\text{for}\; n \in \N \}$ on the unit interval, by replacing $\N$ with the Gaussian integers with positive real part, and the unit interval with $B(1/2,1/2) \subset \C$. Such systems may be profitably studied within the broad framework of conformal graph directed Markov systems (CGDMSes), see \cite{CLU} and its references.
\item CIFS and CGDMS limit sets model several fractals that arise from Sullivan's dictionary \cite{McMullen_classification, Sullivan_conformal_dynamical} (see also \cite[Table 1]{DSU_rigidity}), which translates between the study of Julia sets associated with holomorphic and meromorphic iteration, and Kleinian limit sets associated with actions of discrete subgroups of isometries of hyperbolic (negatively curved) spaces. Exploring analogues of our results within this broad framework would be very interesting.
\item Find analogues of our results beyond the conformal setting, e.g. for infinite self-affine IFSes studied by Jurga \cite{Jurga}, or the broad class of examples studied by Reeve \cite{Reeve}.
\end{itemize}

\appendix
\section{Computation of coefficients}
\label{appendix1}

Although we do not state precise formulas for the coefficients $c_{I,j,k}$ and $c_{I,j}$ appearing in Theorem \ref{maintheorem}, our proofs do facilitate the construction of such formulas. Namely, Lemmas \ref{lemmaalphaf} and \ref{lemmabetaf} allow us to write $\alpha$ and $\beta$ in terms of the secondary operators $\alpha_j$ and $\beta_{i,j}$. Plugging these into \eqref{operatorequation} gives the coefficients for the power series $\Xi$ in terms of $(\eta_i)$, $\theta$, and $\xi$.

To illustrate this process, we compute Hensley's coefficients $c_{1,0}$ and $c_{2,1}$ for the sequence of systems $E_N = \{1,\ldots,N\} \to E = \N$, and we show that the formula for $c_{2,0}$ in \eqref{HDFleqN} involves Apery's constant $\zeta(3)$ as well as the expressions
\begin{align}
\label{Qterms}
\mu M_\phi &Q L M_\phi g, &
\mu M_\phi &Q h, &
\nu &Q L M_\phi g, &
\nu &Q h,
\end{align}
where the notation is as in \6\ref{sectiongauss}, and $M_\phi$ denotes multiplication by the function
\[
\phi(x) = 2\log(x).
\]
Note that it appears to be impossible to rewrite even the simplest of these expressions, $\nu Q h = Q\one(0) = 1 + \sum_{n\geq 1} (L^n \one(0) - 1/\log(2))$, in closed form. However, all of the expressions can be approximated with arbitrary accuracy.

To this end, in what follows we let $X = N^{-1} \log(N)$. Note that since $\theta = O(N^{-1})$, we have $c_{I,j} \, \eta_I \theta^j \equiv_{X^k} 0$ whenever $\#(I) + \Sigma(I) + j \geq k$. In particular, the second-order approximation of \eqref{operatorequation} is
\[
\mu\alpha g + \mu\beta g + \mu\Delta Q \Delta g \underset{X^3}{\equiv} \Xi = 0.
\]
Next we observe that $\phi \circ u_b(x) = \log|u_b'(x)|$ for all $b,x$. Thus since $\delta = 1$, for all $j \geq 1$ we have
\[
\alpha_j f(x) = \frac1{j!} \sum_{b \in S} |u_b'(x)| \log^j|u_b'(x)| \; f\circ u_b(x) = \frac1{j!} \sum_{b\in S} |u_b'(x)| (\phi^j f)\circ u_b(x) = \frac1{j!} L M_\phi^j f(x)
\]
so
\begin{align*}
\mu \alpha_j g &= \frac1{j!} \mu L M_\phi^j g
= \frac1{j!} \mu M_\phi^j g\\
&= \frac{2^j}{j!} \int_0^1 \frac{\log^j(x)}{1 + x} \;\dee x
= \frac{2^j}{j!} \sum_{n = 0}^\infty (-1)^n \int_0^1 x^n \log^j(x) \;\dee x \\
&= 2^j \sum_{n = 0}^\infty (-1)^n \sum_{j' = 0}^j \frac{(-1)^{j'}}{(n+1)^{j' + 1} (j - j')!} \big[x^{n+1} \log^{j - j'}(x)\big]_{x=0}^1 \\
&= (-1)^j 2^j \sum_{n = 0}^\infty \frac{(-1)^n}{(n+1)^{j+1}}
= (-1)^j 2^j (1 - 2^{-j}) \zeta(j+1)\\
&= (-1)^j (2^j - 1) \zeta(j + 1)
\end{align*}
and thus
\begin{align*}
\mu\alpha g
&= \sum_{j=1}^\infty \theta^j \mu \alpha_j g
\underset{X^3}{\equiv} \theta \mu \alpha_1 g + \theta^2 \mu \alpha_2 g
= -\zeta(2)\theta + 3 \zeta(3) \theta^2.
\end{align*}
On the other hand, by direct computation\Footnote{The second equality is guaranteed by \vanishinga.} we have
\begin{align*}
\mu \beta_{0,0} g &= 1, &
\mu \beta_{0,1} g &= 0,&
\mu \beta_{1,0} g &= \int_0^1 -2x-1 \;\dee x = -2
\end{align*}
and
\begin{align*}
\eta_0 &\underset{X^3}{\equiv} -\left(\frac{N^{-(1 + 2\theta)}}{1 + 2\theta} - \frac{N^{-(2 + 2\theta)}}{2}\right)
\underset{X^3}{\equiv} -\frac1N + \frac{2\theta\log(N)}{N} + \frac{2\theta}{N} + \frac{1}{2N^2} \underset{X^2}{\equiv} -\frac1N,\\
\eta_1 &\underset{X^3}{\equiv} -\frac{N^{-(2 + 2\theta)}}{2 + 2\theta} \underset{X^3}{\equiv} -\frac{1}{2N^2}
\end{align*}
so
\begin{align*}
\mu\beta g
&\underset{X^3}{\equiv} \eta_0 (\mu\beta_{0,0} g + \theta \mu\beta_{0,1} g) + \eta_1 (\mu \beta_{1,0} g)\\
&\underset{X^3}{\equiv} \left(-\frac1N + \frac{2\theta\log(N)}{N} + \frac{2\theta}N + \frac1{2N^2}\right) (1) + \left(-\frac1{2N^2}\right) (-2)
\end{align*}
Finally, since $\beta_{0,0} = h \nu$, $\mu h = \nu g = 1$, and $\mu L = L$, we have
\begin{align*}
\mu\Delta Q \Delta g
&\underset{X^3}{\equiv} (\theta \mu M_\phi + \eta_0 \nu) Q (\theta L M_\phi g + \eta_0 h).
\end{align*}
Next we compute the first-order approximation of $\Xi$:
\[
\Xi  \underset{X^2}{\equiv} -\zeta(2)\theta + (-1/N)
\]
and so setting $\Xi = 0$ yields $\theta \equiv_{X^2} -1/\zeta(2)N$, giving Hensley's first coefficient 
\[
c_{1,0} = -1/\zeta(2) = -6/\pi^2.
\] 
Plugging this into the above formulas gives
\begin{align*}
\theta = \frac1{\zeta(2)}\big(\Xi + \zeta(2)\theta\big) \underset{X^3}{\equiv} \frac1{\zeta(2)} &\left[- \frac1N - \frac{2}{\zeta(2)} \frac{\log(N)}{N^2} + \left(\frac{3}{2} - \frac{2}{\zeta(2)} + \frac{3\zeta(3)}{\zeta^2(2)}\right)\frac{1}{N^2}\right.\\
&+ \left.\left(\frac1{\zeta(2)} \mu M_\phi + \nu\right) Q \left(\frac1{\zeta(2)} L M_\phi g + h\right)\frac{1}{N^2}\right].
\end{align*}
This formula gives Hensley's second coefficient $c_{2,1} = -2/\zeta^2(2) = -72/\pi^4$: the next coefficient is
\begin{equation}
\label{c20}
c_{2,0} = \frac{3}{2} - \frac{2}{\zeta(2)} + \frac{3\zeta(3)}{\zeta^2(2)} + \left(\frac1{\zeta(2)} \mu M_\phi + \nu\right) Q \left(\frac1{\zeta(2)} L M_\phi g + h\right)
\end{equation}
Notice that the four terms of \eqref{Qterms} all appear in this formula.

\subsection{Some further coefficients}
\label{appendix2}

It turns out to be possible to compute the coefficients $c_{i,i - 1}$ directly without dealing with any coefficients $c_{i,j}$ such that $j \leq i - 2$. Namely, let us write $A \equiv_p B$ if
\[
B - A \equiv N^{-p} \sum_{i = 0}^{\to\infty} \sum_{j = 0}^i c_{i,j} \frac{\log^j(N)}{N^i}
\]
for some coefficients $c_{i,j}$. We can think of this as saying that $B - A$ is ``formally $O(N^{-p})$'', in a sense where $N^{-1} \log(N)$ is considered ``small'' but $N^{-1} \log^2(N)$ is not considered ``small''.

Now \eqref{operatorequation} becomes
\[
\mu\alpha g + \mu\beta g \underset{2}{\equiv} \Xi = 0.
\]
Moreover, similarly to before we have $\mu\alpha g \equiv_2 \theta \mu\alpha_1 g = -\zeta(2)\theta$. On the other hand,
\[
\mu\beta g \underset{2}{\equiv} \eta_0 \mu \beta_{0,0} g = \eta_0 \underset{2}{\equiv} -N^{-1 - 2\theta}
\]
by the Euler-Maclaurin formula. Thus, \eqref{operatorequation} becomes
\[
\zeta(2)\theta \underset{2}{\equiv} -N^{-(1 + 2\theta)} = -N^{-1} \exp(-2\theta \log(N)).
\]
So we have $-\zeta(2) N\theta \equiv_2 F(2\log(N)/\zeta(2)N)$, where
\begin{equation}
\label{aidef}
F(x) = \sum_{j = 0}^\infty a_j x^j \;\;\;\; \text{ satisfies} \;\;\;\;
F(x) = \exp(x F(x)),
\end{equation}
i.e. $F$ is the inverse of $y\mapsto \log(y)/y$ defined in a neighborhood of $0$ and sending $0$ to $1$. It follows that $c_{i,i - 1} = -(2^{i-1}/\zeta^i(2)) a_{i-1}$. 

To compute $a_j$, we first recall Cayley's formula: the number of spanning trees on $i$ points is $T_i = i^{i - 2}$.  To produce a recursive formula for $(T_i)$, observe that to define a spanning tree on $i$ points, you need to define (a) a partition of the set of $i-1$ points, (b) spanning trees on each element of the partition, and (c) a root node in each of these spanning trees to connect to the final node to form the overall tree. 

Now compare the recursive formulas for $(a_j)$ and $(T_i)$:
\begin{align*}
a_j &= \sum_{n = 0}^\infty \frac1{n!} \sum_{\substack{t\in \N^n \\ |t| + n = j}} \prod_{k = 1}^n a_{t_k}
= \sum_{P\in \PP_j} \prod_{A\in P} \#(A)! a_{\#(A) - 1}\\
T_i &= \sum_{P\in \PP_{i - 1}} \prod_{A\in P} \#(A) T_{\#(A)}
\end{align*}
where $\PP_n$ is the set of all partitions of $\{1,\ldots,n\}$. It follows that
\[
a_j = \frac{T_{j + 1}}{j!} = \frac{(j + 1)^{j - 1}}{j!} \cdot
\]
So
\[
c_{i,i-1} = -\frac{2^{i-1}}{\zeta^i(2)} \frac{i^{i-2}}{(i-1)!},
\]
which is equivalent to \eqref{cii-1} from the introduction.

\section{Definition of a conformal iterated function system (CIFS)}
\label{appendixCIFS}

We recall the definition of a conformal iterated function system (CIFS) due to Mauldin--Urba\'nski.
\begin{definition}[Cf. {\cite[p.108-110]{MauldinUrbanski1}}]
\label{definitionCIFS}
Fix $d\in\N$. A collection of maps $(u_a)_{a\in E}$ is called a \emph{conformal iterated function system (CIFS)} on $\R^d$ if:
\begin{enumerate}[1.]
\item $E$ is a countable (finite or infinite) index set;
\item $X\subset\R^d$ is a nonempty compact set which is equal to the closure of its interior;
\item For all $a\in E$, $u_a(X) \subset X$;
\item (Cone condition)
\[
\inf_{\xx\in X, r\in (0,1)} \frac{\lambda(X\cap B(\xx,r))}{r^d} > 0,
\]
where $\lambda$ denotes Lebesgue measure on $\R^d$;
\item $V\subset\R^d$ is an open connected bounded set such that $\dist(X,\R^d\butnot V) > 0$;
\item For each $a\in E$, $u_a$ is a conformal homeomorphism from $V$ to an open subset of $V$;
\item (Uniform contraction) $\sup_{a\in E} \sup |u_a'| < 1$, and if $E$ is infinite, $\lim_{a\in E} \sup |u_a'| = 0$;
\item (Bounded distortion property) For all $n\in\N$, $\omega\in E^n$, and $\xx,\yy\in V$,
\begin{equation}
\label{BD2}
|u_\omega'(\xx)| \asymp_\times |u_\omega'(\yy)|,
\end{equation}
where
\[
u_\omega = u_{\omega_1}\circ\cdots\circ u_{\omega_n}.
\]
\end{enumerate}
The CIFS is called an \emph{OSC CIFS} if in addition it satisfies the \emph{open set condition (OSC)}, i.e. if the collection $(u_a(\Int(X)))_{a\in E}$ is disjoint. 
\end{definition}

\hrulefill

{\bf Acknowledgements.} This research began on 12$^{th}$ March 2018 when the authors met at the American Institute of Mathematics via their SQuaRE program. We thank the institute and their staff for their hospitality and excellent working conditions. In particular, we thank Estelle Basor for her continued encouragement and support.
The first-named author was supported in part by a 2017-2018 Faculty Research Grant from the University of Wisconsin-La Crosse. 
He thanks the scientific and organizing committees of the \href{http://sites.math.u-pem.fr/oneworld-fractals/}{\it One-world Fractals and Related Fields} seminar, in particular St\'ephane Seuret and Julien Barral, for the opportunity to speak about this work at his first virtual research lecture. 
The third-named author was supported in part by the EPSRC Programme Grant EP/J018260/1, and also in part by a Royal Society University Research Fellowship, URF\tbs R1\tbs180649. The fourth-named author was supported in part by a Simons Foundation Grant 581668. We thank the referee for their comments and suggestions to help improve the exposition. 

{\bf Data availability statement.} Data sharing not applicable to this article as no datasets were generated or analysed during the current study.

\bibliographystyle{amsplain}

\bibliography{bibliography}

\end{document}